\title{Hopf algebroids from noncommutative bundles}
\date{January 2022}
\author{Xiao Han, Giovanni Landi, Yang Liu}
\address[]{\textit{Xiao Han},
Queen Mary University of London.
}
\email{x.h.han@qmul.ac.uk}
\address[]{\textit{Giovanni Landi},
Universit\`a di Trieste,
Trieste, Italy
\newline \indent
and Institute for Geometry and Physics (IGAP) Trieste, Italy 
\newline \indent and INFN, Trieste, Italy}
\email{landi@units.it}
\address[]{\textit{Yang Liu},
SISSA, via Bonomea 265, 34136 Trieste, Italy}
\email{ yliu@sissa.it}
\newcommand{\brac}[1]{\ensuremath{\left( {#1} \right)}}
\newcommand{\set}[1]{\ensuremath{\left\{ {#1} \right\}}}
\newcommand{\abrac}[1]{\ensuremath{\left\langle {#1} \right\rangle}}
\newcommand{\twcoc}{\lambda}
\newcommand{\beq}{\begin{equation}}
\newcommand{\eeq}{\end{equation}}
\newcommand{\nn}{\nonumber}
\newcommand{\ot}{{\otimes}}
\newcommand{\pot}{{\dot{\otimes}}}
\newcommand{\ii}{{\,{\rm i}\,}}
\numberwithin{equation}{section}
\theoremstyle{plain}
\newtheorem{thm}{Theorem}[section]
\newtheorem{lem}[thm]{Lemma}
\newtheorem{prop}[thm]{Proposition}
 \newtheorem{cor}[thm]{Corollary}
\newtheorem{defi}[thm]{Definition}
\theoremstyle{remark}
\newtheorem{exa}[thm]{Example}
\newtheorem{rem}[thm]{Remark}
\newcommand{\cL}{\mathcal{L}}
\newcommand{\C}{\mathcal{C}}
\renewcommand{\O}{\mathcal{O}}
\newcommand{\II}{\mathbb{I}}
\newcommand{\IR}{\mathbb{R}}
\newcommand{\IC}{\mathbb{C}}
\newcommand{\IZ}{\mathbb{Z}}
\newcommand{\IT}{\mathbb{T}}
\newcommand{\wt}{\widetilde}
\DeclareMathOperator{\diag}{diag}
\DeclareMathOperator{\SU}{SU}
\DeclareMathOperator{\U}{U} 
\DeclareMathOperator{\SO}{SO}
\DeclareMathOperator{\SOt}{SO_\theta}
\DeclareMathOperator{\Aut}{Aut} 
\DeclareMathOperator{\Hom}{Hom}
\newcommand*{\id}{\textup{id}}  
\def\dd{\mathrm{d}}
\def\St{S_\theta}
\newcommand{\Sk}{S_{\theta'}}
\newcommand{\zero}[1]{{#1}{}_{\scriptscriptstyle{(0)}}}
\newcommand{\one}[1]{{#1}{}_{\scriptscriptstyle{(1)}}}
\newcommand{\two}[1]{{#1}{}_{\scriptscriptstyle{(2)}}}
\newcommand{\onet}[1]{{#1}{}_{\scriptscriptstyle{(1')}}}
\newcommand{\twot}[1]{{#1}{}_{\scriptscriptstyle{(2')}}}
\newcommand{\tuno}[1]{{#1}{}{}^{\scriptscriptstyle{<1>}}}
\newcommand{\tdue}[1]{{#1}{}{}^{\scriptscriptstyle{<2>}}}
\def\b#1{{\bf #1}}
\begin{document}
\begin{abstract}
We present two classes of examples of Hopf algebroids associated with noncommutative principal bundles. 
The first comes from deforming the principal bundle while leaving unchanged the structure Hopf algebra. 
The second is related to deforming a quantum homogeneous space; this needs a careful deformation of the structure Hopf algebra in order to preserve the compatibilities between the Hopf algebra operations. 
\end{abstract}

\maketitle
\tableofcontents
\parskip = .75 ex

\thispagestyle{empty}

\section{Introduction}
A commutative Hopf algebroid is somehow the dual of a groupoid, in the spirit of Hopf algebras versus groups. 
One is extending the scalar, similarly to the passage from Hilbert space to Hilbert module: 
the ground field $k$ gets replaced by an algebra $B$  which could be noncommutative.
The result is a bi-algebra over a noncommutative base algebra. In fact, in general not all structures survive: 
there is a notion of coproduct and counit but in general there is no antipode. 
The notions of source and target maps are still present.

An important groupoid used in gauge theory, 
is the gauge groupoid associated with a principal bundle \cite{Mac05}. 
In \cite{HL21}, as a preliminary step to study the gauge group of a noncommutative principal bundle, we considered the Ehresmann--Schauenburg bialgebroid of the noncommutative bundle which, in a sense, is the quantization of the classical gauge groupoid. For a monopole bundle over a quantum Podle\'s sphere
and a not faithfully flat Hopf--Galois extension of commutative algebras we gave a suitable invertible antipode 
so that the corresponding bialgebroids got upgraded to Hopf algebroids.

In the present paper we study two classes of examples of Hopf algebroids associated with noncommutative principal bundles. 
The first comes from deforming the principal bundle while leaving unchanged the structure Hopf algebra. 
The prototype for this is the bundle over the noncommutative four-sphere $S_\theta^{4}$ with classical $\SU(2)$ 
as structure group. 
The second class is associated to deformations of quantum homogeneous spaces. 
It is known that one needs a careful deformation of the multiplication in a Hopf algebra in order to preserve the compatibilities between the Hopf algebra structures. And this attention is needed also for deforming homogeneous spaces. Examples of the second class are the principal bundles over the noncommutative spheres $S_\theta^{2n}$ with noncommutative orthogonal group $\SO_\theta(2n, \IR)$ as structure group.  

This paper is organised as follows.  
In \S\ref{se;pr} we give a recap of algebraic preliminaries and notation, and of the relevant concepts for noncommutative principal bundles (Hopf--Galois extensions), bialgebroids and Hopf algebroids.
We devote \S\ref{se:hge} to two well know examples of Hopf--Galois extensions 
for which in \S\S ~\ref{sec:ev-q-shp} and \ref{se:alg-su2sym} we construct the corresponding Hopf algebroids; these are a $\SU(2)$-bundle over the sphere $S^4_\theta$ and $\SO_\theta(2n)$ bundles over even spheres $S^{2n}_\theta$.
In \S\ref{alg-def} we review the general scheme of deforming by the action of tori. 
This is done via $\IZ^n$-graded spaces and deforming relevant structures by means of a bi-character. 
The discussion is developed along two scenarios to cover the constructions of both 
\S\ref{subsec:ScI}, where the structure Hopf algebra is not changed, and \S\ref{subsec:ScII}
where attention is payed to a suitable deformation of the multiplication that is compatible with all Hopf algebra 
operations, in order to get new Hopf algebras with related comodule
algebras. The latter framework accommodates deformed homogeneous spaces. The noncommutative principal bundles that result from both schemes of deformation have natural Ehresmann--Schauenburg bialgebroids. In the context of the present paper the flip map will preserve the bialgebrois and will satisfy all properties for an invertible algebrois antipode. All of these last parts and the examples are worked out in \S\ref{se:had}.

\bigskip
\noindent
\subsubsection*{Acknowledgements}
We are grateful to Chiara Pagani for useful discussions. 
XH was partially supported by Marie Curie Fellowship HADG - 101027463 agreed between QMUL  and the  European Commission, and by an Assistant Professor fellowship at IMPAN. 
GL acknowledges partial support from INFN, Iniziativa Specifica GAST,
from INdAM-GNSAGA and from the INDAM-CNRS IRL-LYSM. 
YL would like to thank SISSA for the postdoctoral fellowship and excellent working environment.

\section{Preliminary results} \label{se;pr}
To be definite we work over the field $\IC$ of complex numbers. Algebras (coalgebras) are assumed to be unital and associative (counital and coassociative) with morphisms of algebras taken to be unital (of coalgebras taken counital). 
Tensor product over $\IC$ is denoted $\ot$ while the symbol $\pot$ implies also a matrix sum: for matrices $M=(m_{jk})$
and $N=(n_{kl})$ the product $M \pot N$ have components $M \pot N = (\sum_k m_{jk} \ot n_{kl})$.

\subsection{Rings and corings over an algebra}

For an algebra $B$ a {\em $B$-ring} is a triple $(A,\mu,\eta)$. Here $A$ is a $B$-bimodule with $B$-bimodule maps
$\mu:A\ot_ {B} A \to A$ and $\eta:B\to A$, satisfying associativity and unit conditions:
\begin{equation}
\mu\circ(\mu\ot _{B}  \id_A)=\mu\circ (\id_A \ot _{B} \mu), 
\quad 
\mu\circ(\eta \ot _{B} \id_A)=\id_A=\mu\circ (\id_A\ot _{B} \eta).
\end{equation}
A morphism of $B$-rings $f:(A,\mu,\eta)\to (A',\mu',\eta')$ is an
$B$-bimodule map $f:A \to A'$, such that
$f\circ \mu=\mu'\circ(f\ot_{B} f)$ and $f\circ \eta=\eta'$.

From \cite[Lemma 2.2]{Boehm} there is a bijective correspondence between $B$-rings $(A,\mu,\eta)$ and algebra automorphisms
$\eta : B \to A$. Starting with a $B$-ring $(A,\mu,\eta)$, one obtains a multiplication map $A \ot A \to A$ by composing the canonical surjection $A \ot A \to A\ot_B A$ with the map $\mu$. Conversely, starting with an algebra map $\eta : B \to A$, a $B$-bilinear associative multiplication $\mu:A\ot_ {B} A \to A$ is obtained from the universality of the coequaliser $A \ot A \to A\ot_B A$ which identifies an element $ a r \ot a'$ with $ a \ot r a'$.

Dually,  for an algebra $B$ a {\em $B$-coring} is a
triple $(C,\Delta,\varepsilon)$. Here $C$ is a $B$-bimodule with $B$-bimodule maps
$\Delta:C\to C\ot_{B} C$ and $\varepsilon: C\to B$ that satisfy coassociativity and counit conditions,
\begin{align}
(\Delta\ot _{B} \id_C)\circ \Delta = (\id_C \ot _{B} \Delta)\circ \Delta, \quad
(\varepsilon \ot _{B} \id_C)\circ \Delta = \id_C =(\id_C \ot _{B} \varepsilon)\circ \Delta.
\end{align}
A morphism of $B$-corings $f:(C,\Delta,\varepsilon)\to
(C',\Delta',\varepsilon')$ is a $B$-bimodule map $f:C \to C'$, such that
$\Delta'\circ f=(f\ot_{B} f)\circ \Delta$ and
$\varepsilon' \circ f =\varepsilon$.

Let $B$ be an algebra.
A {\em left $B$-bialgebroid} $\cL$ consists of a $(B\ot B^{op})$-ring
together with a $B$-coring structures on the same vector space $\cL$, with mutual compatibility conditions
\cite{Take77}.
From what said above, a $(B\ot B^{op})$-ring $\cL$ is the same as an algebra map $\eta : B \ot B^{op} \to \cL$.
Equivalently, one may consider the restrictions
$$
s := \eta ( \, \cdot \, \ot_B 1_B ) : B \to \cL \quad \mbox{and} \quad t := \eta ( 1_B \ot_B  \, \cdot \, ) : B^{op} \to \cL
$$
which are algebra maps with commuting ranges in $\cL$, called the \emph{source} and the \emph{target} map of the
$(B\ot B^{op})$-ring $\cL$. Thus a $(B\ot B^{op})$-ring is the same as a triple $(\cL,s,t)$ with $\cL$ an algebra and $s: B \to \cL$
and $t: B^{op} \to \cL$ both algebra maps with commuting range.

For a left $B$-bialgebroid $\cL$ the compatibility conditions are required to be the following.
\begin{itemize}
\item[i)] The bimodule structures in the $B$-coring $(\cL,\Delta,\varepsilon)$ are
related to those of the $B\ot B^{op}$-ring $(\cL,s,t)$ via
\begin{equation}\label{eq:rbgd.bimod}
b\triangleright a \triangleleft \tilde{b}:= s(b) t(\tilde{b})a , \quad \textrm{for} \,\, b, \tilde{b}\in B, \, a\in \cL.
\end{equation}

\item[ii)] Considering $\cL$ as a $B$-bimodule as in \eqref{eq:rbgd.bimod},
  the coproduct $\Delta$ corestricts to an algebra map from $\cL$ to
\begin{equation}\label{eq:Tak.prod}
\cL \times_{B} \cL := \left\{\ \sum\nolimits_j a_j\ot_{B} \tilde{a}_j\ |\ \sum\nolimits_j a_jt(b) \ot_{B} \tilde{a}_j =
\sum\nolimits_j a_j \ot_{B}  \tilde{a}_j s(b), \,\,\, \forall  \, b \in B\ \right\},
\end{equation}
where $\cL \times_{B} \cL$ is an algebra via component-wise multiplication.
\\
\item[iii)] 
The counit $\varepsilon : \cL \to B$ 
satisfies the properties,
\begin{itemize}
\item[1)] $\varepsilon(1_{\cL})=1_{B}$,   
\item[2)] $\varepsilon(s(b)a)=b\varepsilon(a) $,   
\item[3)] $\varepsilon(as(\varepsilon(\tilde{a})))=\varepsilon(a\tilde{a})=\varepsilon(at (\varepsilon(\tilde{a})))$,  \qquad 
for all $b\in B$ and $a,\tilde{a} \in \cL$.
\end{itemize}
\end{itemize}

An automorphism of the left bialgebroid $(\cL, \Delta, \varepsilon,s,t)$ over the algebra $B$ is a pair $(\Phi, \varphi)$
of algebra automorphisms, $\Phi: \cL \to \cL$, $\varphi : B \to B$ such that:
\begin{align}
\Phi\circ s & = s\circ \varphi , \qquad \Phi\circ t = t\circ \varphi ,  \label{amoeba(i)} \\
(\Phi\ot_{B} \Phi)\circ \Delta & = \Delta \circ \Phi , \qquad
\varepsilon\circ \Phi= \varphi\circ \varepsilon \label{amoeba(ii)} .
 \end{align}
 
In fact, the map $\varphi$ is uniquely determined by $\Phi$ via $\varphi = \varepsilon \circ \Phi \circ s$ 
and one can just say that $\Phi$ is a bialgebroid automorphism. Automorphisms of a bialgebroid $\cL$
form a group $\Aut(\cL)$ by map composition. A \textit{vertical} automorphism is one of the type 
$(\Phi, \varphi=\id_{B})$.

From the conditions \eqref{amoeba(i)}, $\Phi$ is a $B$-bimodule map: 
$\Phi(b \triangleright c\triangleleft \tilde{b})=b \triangleright_{\varphi}\Phi(c)\triangleleft_\varphi \tilde{b}$. 
The first condition \eqref{amoeba(ii)} is well defined once the conditions \eqref{amoeba(i)} are satisfied (the balanced tensor product is induced by $s':=s\circ \varphi $ and $t':=t\circ \varphi $). Conditions \eqref{amoeba(i)} imply $\Phi$ is a coring map, therefore $(\Phi, \varphi)$ is an isomorphism between the starting and the new bialgebroid.
  
Finally, we recall from \cite[Def. 4.1]{BS04} the conditions for a Hopf algebroid with invertible antipode. 
 Given a left bialgebroid $(\cL, \Delta, \varepsilon,s,t)$ over the algebra $B$, an invertible antipode
$S : \cL  \to  \cL $ in an algebra anti-homomorphism with inverse $S^{-1} : \cL  \to  \cL $ such that
\beq\label{hopbroid1}
S \circ t = s
\eeq
and satisfying compatibility conditions with the coproduct:
\begin{align}\label{hopbroid2}
\onet {(S^{-1}\two{h})} \ot_B \twot {(S^{-1}\two{h})}\one{h} & = S^{-1} h \ot_B 1_\cL \nn \\
\onet {(S\one{h})} \two{h} \ot_B \twot {S(\one{h})} & = 1_\cL \ot_B S h ,
\end{align}
for any $h\in \cL $. These then imply  
$
S(\one{h}) \, \two{h} = t \circ \varepsilon \circ S h. 
$

\subsection{Hopf--Galois extensions}\label{sec:hge}
We give a brief recall of Hopf--Galois extensions
as noncommutative principal bundles. These extensions are $H$-comodule algebras
$A$ with a canonically defined map $\chi: A\ot_B A\to A\ot H$
which is required to be invertible \cite{HJSc90}.

\begin{defi} \label{def:hg}
Let $H$ be a Hopf algebra and let $A$ be a $H$-comodule algebra with coaction $\delta^A$.
Consider the subalgebra $B:= A^{coH}=\big\{b\in A ~|~ \delta^A (b) = b \ot 1_H \big\} \subseteq 
A$ of coinvariant elements 
with balanced tensor product $A \ot_B A$.
The extension $B\subseteq A$ is called a $H$-\textup{Hopf--Galois extension} if the
\textit{canonical Galois map}
\begin{align*}  
\chi := (m \ot \id) \circ (\id \ot _B \delta^A ) : 
A \ot _B A \longrightarrow A \ot H ,
\quad \tilde{a} \ot_B a  &\mapsto \tilde{a} \zero{a} \ot \one{a}
\end{align*}
is an isomorphism.
\end{defi}
\begin{rem}\label{fun-rem}
For a Hopf--Galois extension $B\subseteq A$, we take the algebra $A$ to be \emph{faithfully flat} as a right $B$-module.
One possible way to state this property is that for any left $B$-module map $F : M \to N$, 
the map $F$ is injective if and only if the map $\id_A \ot_B F : A  \ot_B M \to A \ot_B N$ 
is injective; injectivity of $F$ implying the injectivity of $\id_A \ot_B F$ would state that $A$ is flat as a right $B$-module (see \cite[Chap.~13]{Wa79}.
\qed\end{rem}

Since the canonical Galois map $\chi$ is left $A$-linear, its inverse is
determined by the restriction $\tau:=\chi^{-1}_{|_{1_A \ot H}}$, named \textit{translation map},
\begin{align}
    \label{eq:tau-defn} 
\tau =\chi^{-1}_{|_{1_A \ot H}} :  H\to A\ot _B A ~ ,
\quad h \mapsto \tau(h) = \tuno{h} \ot_B \tdue{h} \, .
\end{align}
Thus by definition:
\beq
\label{p7}
\tuno{h}\zero{\tdue{h}}\ot \one{\tdue{h}} = 1_{A} \ot h  \, .
\eeq
The translation map enjoys a number of properties \cite[3.4]{HJSc90b}
that we listed here for later use. 
For any $h, k \in H$ and $a\in A$, $b\in B$:  
\begin{align}
\tuno{h} \ot_B \zero{\tdue{h}} \ot \one{\tdue{h}} &= \tuno{\one{h}} \ot_B \tdue{\one{h}} \ot \two{h} \label{p4}  \, , \\
\zero{\tuno{h}} \ot_B {\tdue{h}} \ot \one{\tuno{h}} &= \tuno{\two{h}}  \ot_B \tdue{\two{h}} \ot S(\one{h})  \, , \label{p1}
\end{align}
\begin{align}
\label{p5}
\tuno{h}\tdue{h} &= \varepsilon(h)1_{A}  \, , \\
\label{p3}
\zero{a}\tuno{\one{a}}\ot_{B}\tdue{\one{a}} &= 1_{A} \ot_{B}a \, , \\
\label{p8}
b\, \tuno{h} \ot_B \tdue{h} &= \tuno{h} \ot_B \tdue{h} \, b \,  , 
\end{align}
\begin{align}
\label{p2}
\tuno{(h k)}\ot_{B}\tdue{(h k)} &= \tuno{k}\tuno{h}\ot_{B}\tdue{h}\tdue{k}  \, , \\ 
\label{p6}
\tuno{\one{h}}\ot_{B}\tdue{\one{h}}\tuno{\two{h}}\ot_{B}\tdue{\two{h}} & 
=\tuno{h}\ot_{B}1_{A}\ot_{B}\tdue{h}  \, . 
\end{align}

\subsection{Ehresmann--Schauenburg bialgebroids} 
\label{sec:ES-bialerd}

To any Hopf--Galois extension $B=A^{co \, H}\subseteq A$ one associates a $B$-coring  
and a bialgebroid \cite{schau2} (see \cite[\S 34.13 and \S 34.14]{BW}).
These can be viewed as a quantization of the gauge groupoid that is associated to a (classical) principal fibre bundle (see  \cite{Mac05}).

The coring can be given in a few equivalent ways.
Let $B=A^{co \, H}\subseteq A$ be a 
Hopf--Galois extension with right coaction $\delta^{A} : A \to A \ot H$. This extends to 
a diagonal coaction, 
\beq\label{AAcoact}
\delta^{A\ot  A}: A\ot  A\to A\ot  A\ot  H, \quad a\ot  \tilde{a}  \mapsto
\zero{a}\ot  \zero{\tilde{a}} \ot   \one{a}\one{\tilde{a}} , \quad  \textup{for} \quad a, \tilde{a} \in A .
\eeq
Let $\tau$ be the translation map of the Hopf--Galois extension. We have the following: 		
\begin{lem}\label{lem-2vers}
The $B$-bimodule of coinvariant elements for the diagonal coaction, 
\beq  \label{ec2}
(A\ot A)^{coH} = \{a\ot  \tilde{a}\in A\ot  A \, ; \,\, \zero{a}\ot  \zero{\tilde{a}}\ot  \one{a}\one{\tilde{a}}=a\ot  \tilde{a}\ot  1_H \}
\eeq
is the same as the $B$-bimodule
\begin{equation}\label{ec1}
\C(A, H) :=\{a\ot  \tilde{a}\in A\ot  A : \,\, \zero{a}\ot  \tau(\one{a})\tilde{a}=a\ot  \tilde{a}\ot _B 1_A\}.
\end{equation}
\end{lem}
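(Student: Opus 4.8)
The plan is to prove the two sets coincide by showing each is contained in the other, using the translation map and its properties, especially \eqref{p7} and \eqref{p5}.

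First I would show $(A\ot A)^{coH} \subseteq \C(A,H)$. Take $a\ot\tilde a$ with $\zero{a}\ot\zero{\tilde a}\ot\one{a}\one{\tilde a} = a\ot\tilde a\ot 1_H$. The goal is to compute $\zero{a}\ot\tau(\one{a})\tilde a$ and identify it with $a\ot_B\tilde a\otimes_B 1_A$... wait, the target of that expression lives in $A\ot A\ot_B A$. Let me reconsider: the defining relation in \eqref{ec1} is $\zero{a}\ot\tau(\one{a})\tilde a = a\ot\tilde a\ot_B 1_A$, an equation in $A\ot A\ot_B A$. I would apply $\id_A\otimes\delta^A$ to nothing — rather, the cleanest route: apply to the left-hand side $\zero{a}\ot\tuno{\one{a}}\ot_B\tdue{\one{a}}\tilde a$ the map $\id\otimes\id\otimes\delta^A$ on the last leg, use \eqref{p4} to rewrite, then use the coinvariance hypothesis. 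Alternatively, and more directly, I would use property \eqref{p7}: applying $\chi$ (the canonical Galois map, in the form $m\otimes\id$ after coaction) to $\tuno{\one{a}}\ot_B\tdue{\one{a}}$ recovers $\one{a}$, so one can check the $\C(A,H)$ condition by instead applying $\id\otimes\chi$ and verifying equality in $A\otimes A\otimes H$, where it becomes exactly the coinvariance condition \eqref{ec2}. Since $\chi$ is an isomorphism (hence $\id_A\otimes\chi$ is injective), this suffices.

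For the reverse inclusion $\C(A,H)\subseteq(A\ot A)^{coH}$, I would start from $\zero{a}\ot\tuno{\one{a}}\ot_B\tdue{\one{a}}\tilde a = a\ot\tilde a\ot_B 1_A$ and apply the map $m\otimes\id$ composed with coaction on the appropriate legs — concretely, apply $\id_A \otimes (m\otimes\id)\circ(\id\otimes_B\delta^A)$, i.e. $\id_A\otimes\chi$, to both sides; by \eqref{p7} the left side becomes $\zero{a}\ot\zero{\tilde a}\ot\one{a}\one{\tilde a}$ wait — one needs to track the coaction on $\tdue{\one{a}}\tilde a$, which by the comodule algebra property and \eqref{p7} gives $\zero{a}\otimes\zero{\tilde a}\otimes\one{a}\one{\tilde a}$ — and the right side becomes $a\otimes\tilde a\otimes 1_H$. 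That is precisely the coinvariance condition \eqref{ec2}.

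The main obstacle I anticipate is bookkeeping with the balanced tensor products: the expression $\tau(\one{a})\tilde a$ sits in $A\ot_B A$ and multiplying by $\tilde a$ on the right of $\tdue{\one{a}}$ requires care, and one must be sure the maps $\id\otimes\chi$ etc.\ are well defined on the relevant balanced tensor products (using properties \eqref{p8} and left $A$-linearity of $\chi$) before invoking injectivity. Once the two inclusions are set up correctly, the computations reduce to substituting \eqref{p7} and the comodule algebra axiom, so the heart of the proof is simply recognizing that $\C(A,H)$ is the preimage of $(A\ot A)^{coH}$ under the isomorphism $\id_A\otimes\chi$.
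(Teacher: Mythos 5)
Your proposal is correct and fills in exactly the ``direct check using properties of the canonical map $\chi$ and of the translation map $\tau$'' that the paper's one-line proof gestures at: both inclusions follow from recognizing that $\id_A\otimes\chi$ is a bijection carrying the defining equation of $\C(A,H)$ (via the comodule-algebra property and \eqref{p7}) onto the coinvariance equation \eqref{ec2}. No gaps; the bookkeeping concerns you raise are handled correctly by the left $A$-linearity of $\chi$ and the fact that $\id_A\otimes\chi$ is injective since $\chi$ is an isomorphism.
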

\begin{proof}
This is a direct check: using properties of the canonical map $\chi$ and of the translation map $\tau$, one shows the two inclusions. 
\end{proof}

We have then the following definition \cite{schau2} (see \cite[\S 34.13]{BW}).
\begin{defi}\label{def:ec}
Let $B=A^{co \, H}\subseteq A$ be a 
faithfully flat 
Hopf--Galois extension with translation map $\tau$.
Then the $B$-bimodule $\C(A, H)$ in \eqref{ec1} is a $B$-coring with %
coproduct,
\beq\label{copro}
\Delta(a\ot  \tilde{a}) = \zero{a}\ot  \tau(\one{a})\ot \tilde{a} 
= \zero{a} \ot \tuno{\one{a}} \ot_B \tdue{\one{a}} \ot \tilde{a} , 
\eeq
and counit,
\beq
\varepsilon(a\ot  \tilde{a}) = a\tilde{a} . \label{counit}
\eeq
\end{defi}
\noindent
Applying the map $m_A\ot \id_H$ to elements of \eqref{ec2} one gets  $a\tilde{a}\in B$.
The above $B$-coring is called the \textit{Ehresmann} or \textit{gauge coring}; we denote it $\C(A, H)$.
Also, using the well know relation between the coinvariants of a tensor product of comodules and their 
cotensor product \cite[Lemma 3.1]{HJSc90b}, the coring $\C(A, H)$ can be given as a 
cotensor product $A\, \square \, {}^H\!\!A$.

The Ehresmann coring of a Hopf--Galois extension is
in fact a bialgebroid \cite{schau2}, called the \textit{Ehresmann--Schauenburg bialgebroid} (see  \cite[34.14]{BW}).
One see that $\C(A, H) = (A\ot A)^{coH}$ is a subalgebra of $A \ot A^{op}$; indeed, given
$x \ot  \tilde{x}, \, y \ot  \tilde{y} \in (A\ot A)^{coH}$, one computes
\begin{align*}
     \delta^{A\ot  A}(x y \ot \tilde{y} \tilde{x}) 
     &=
\zero{x} \zero{y} \ot \zero{\tilde{y}} \zero{\tilde{x}} 
\ot \one{x}\one{y} \one{\tilde{y}} \one{\tilde{x}}  \\
     &=
\zero{x} y \ot \tilde{y} \zero{\tilde{x}}  \ot \one{x} \one{\tilde{x}} \\ 
&=
x y \ot \tilde{y} \tilde{x} \ot  1_H.
\end{align*}

\begin{defi}\label{def:reb}
Let $\C(A, H)$ be the coring associated with a faithfully flat Hopf--Galois extension
$B=A^{co \, H}\subseteq A$. Then $\C(A, H)$
is a (left) $B$-bialgebroid with product
$$
(x \ot  \tilde{x}) \bullet_{\C(A, H)} ({y}\ot  \tilde{y}) = x y \ot \tilde{y} \tilde{x} ,
$$
for all $x \ot  \tilde{x}, \, y \ot  \tilde{y} \in \C(A, H)$ (and unit $1_A\ot  1_A$). The target and the  source  maps are
$$
t(b)=1_A \ot  b \quad \textup{and} \quad s(b)=b\ot 1_A.
$$
\end{defi}
\noindent
We refer to \cite[34.14]{BW} for the checking that all defining properties are satisfied. When there is no risk of confusion we drop the decoration $\bullet_{\C(A, H)}$ in the product. 

\section{Two examples of Hopf--Galois extension}\label{se:hge}

We review two well know examples of Hopf--Galois extensions for which in 
\S\S ~\ref{sec:ev-q-shp} and \ref{se:alg-su2sym}
we shall explicitly construct the corresponding algebroids.

\subsection{The $\SU(2)$ principal fibration} \label{sec:HG-SU2}

Consider the sphere $\St^4$ constructed in \cite{CL01}. 
With $\theta$ a real parameter, the algebra $A(\St^4)$ of polynomial functions on the sphere $\St^4$ is 
 generated by elements  $\zeta_0=\zeta_0^*$ and $\zeta_j, \zeta_j^*$, $j=1,2$, subject to relations
\beq\label{s4t}
\zeta_\mu \zeta_\nu = \lambda_{\mu\nu} \zeta_\nu \zeta_\mu, \quad  \zeta_\mu \zeta_\nu^* = \lambda_{\nu\mu} \zeta_\nu \zeta_\mu^*,
\quad \zeta_\mu^* \zeta_\nu^* = \lambda_{\mu\nu} \zeta_\nu^* \zeta_\mu^*, \quad \mu,\nu = 0,1,2 ,
\eeq
with deformation parameters given by
\beq
\lambda_{1 2} = \bar{\lambda}_{2 1} =: \lambda=e^{2\pi \ii \theta}, 
\quad \lambda_{j 0} = \lambda_{0 j } = 1, \quad j=1,2 ,
\eeq
 and together with the spherical relation $\sum_\mu \zeta_\mu^* \zeta_\mu=1$. For $\theta=0$ one recovers 
the 
$*$-algebra of complex polynomial functions on the usual sphere $S^4$.

On the sphere $\St^4$ there is an $\SU(2)$ noncommutative principal fibration 
$\Sk^7 \to \St^4$ given in \cite{LS04}. Firstly, with $\lambda'_{a b} = e^{2 \pi \ii \theta'_{ab}}$ and $(\theta'_{ab})$ a real antisymmetric matrix, 
 the algebra $A(\Sk^7)$ of polynomial functions on the sphere $\Sk^7$ is generated by elements  
$\psi_a, \psi_a^*$, $a=1,\dots,4$, subject to relations
\beq\label{s7t}
\psi_a \psi_b = \lambda'_{a b} \, \psi_b \psi_a, \quad  \psi_a \psi_b^* = \lambda'_{b a} \, \psi_b^* \psi_a,
\quad \psi_a^*\psi_b^* = \lambda'_{a b} \, \psi_b^* \psi_a^* ,
\eeq
and with the spherical relation $\sum_a \psi_a^* \psi_a=1$. 
At $\theta=0$, it is the $*$-algebra of 
 complex polynomial functions on the sphere $S^7$. For the noncommutative Hopf bundle over the given 4-sphere $\St^4$, we need to select 
 a particular noncommutative 7 dimensional sphere $\Sk^7$. We take the one corresponding to the 
following deformation parameters
\beq\label{lambda7}
\lambda'_{ab}= 
\begin{pmatrix} 1 & 1 & \bar{\mu} & \mu \\ 
1 & 1 & \mu & \bar{\mu} \\
\mu & \bar{\mu} &1 & 1\\ 
\bar{\mu} & \mu &1 & 1 
\end{pmatrix}, \quad \mu = \sqrt{\lambda} \qquad \mathrm{or} \qquad
\theta'_{ab}=\frac{\theta}{2}\begin{pmatrix} 0 & 0 & -1 & 1 \\ 
0 & 0 & 1 & -1 \\
1 & -1 & 0 & 0 \\ 
-1 & 1 & 0 & 0  \end{pmatrix}.
\eeq
The previous choice is essentially the only one that  allows the algebra $A(\Sk^7)$ to carry an action 
 of the group $\SU(2)$ by automorphisms and such that the invariant subalgebra coincides with 
 $A(\St^4)$.  The best way to see this is by means of the matrix-valued function on $A(\Sk^7)$ (we are 
changing notations with respect to \cite{LS04}) 
\beq\label{Psi} 
\Psi  =
\begin{pmatrix} 
\psi_1 & - \psi^*_2 \\ 
\psi_2 & \psi^*_1 \\
\psi_3 & -\psi^*_4 \\
\psi_4& \psi^*_3
\end{pmatrix}.
\eeq
Then, the commutation relations of the algebra $A(\Sk^7)$, with deformation parameter in \eqref{lambda7}, gives that  $\Psi^\dagger \Psi = \II_2 $. As a consequence, the matrix-valued function $p = \Psi \Psi^\dagger$ is a projection,  $p^2=p=p^\dagger$, and its entries rather that functions in $A(\Sk^7)$ are (the generating) elements of $A(\St^4)$. Indeed, the right coaction of $A(\SU(2))$ on $A(\Sk^7)$ is simply given by
\beq \label{actionSU2}
\delta  (\Psi) = \Psi \pot  w , \qquad w = \begin{pmatrix} w_1 & - w^*_2 \\ w_2 & w^*_1 
\end{pmatrix} \in A(\SU(2)), \quad w w^\dagger = 1 = w^\dagger w .
\eeq
If $\sigma(a \ot b) = b \ot a$ is the flip, this gives 
$$ \delta  (\Psi^\dagger) = \sigma(w^\dagger  \pot \Psi^\dagger) ,
$$
and the invariance of the entries of $p$ follows at once: 
\beq\label{once}
p \mapsto \delta  (\Psi) \, \delta  (\Psi^\dagger) = p \pot  w w^\dagger = p \pot  1.
\eeq 
The generators of $A(S^4)$, the independent entries of $p$, are identified as bilinears expressions in the $\psi,\psi^*$'s. Explicitly, 
\beq\label{proj}
p = \Psi \cdot \Psi^\dagger =
\begin{pmatrix}
\zeta_0 & 0 & \zeta_1 & - \bar{\mu} \zeta_2^* \\
0 & \zeta_0 & \zeta_2  & \mu \zeta_1^* \\
\zeta_1^*& \zeta_2^* & 1-\zeta_0 & 0\\
-\mu \zeta_2 & \bar{\mu}   \zeta_1 & 0 & 1-\zeta_0
\end{pmatrix} , 
\eeq
with 
\begin{align} \label{sub}
\zeta_1 &=  \psi_1 \psi^*_3 + \psi^*_2 \psi_4 , \qquad
\zeta_2 =  \psi_2 \psi^*_3 - \psi^*_1 \psi_4 ,  \nn \\
 \zeta_0 &= \psi_1 \psi^*_1 + \psi^*_2 \psi_2 = 1 - \psi_3 \psi^*_3 - \psi^*_4 \psi_4 . 
\end{align}
By using the commutation relations of the $\psi$'s, one computes the commutation rules 
$\zeta_1 \zeta_2  = \lambda \zeta_2 \zeta_1$, $\zeta_1 \zeta_2^* = \bar{\lambda} \zeta_2^* \zeta_1$, 
and that  $\zeta_0$ is central and hermitian and $\zeta_1$, $\zeta_2$ are normal. The spherical relation for 
$\Sk^7$ gives an analogous one, 
$\zeta_1^* \zeta_1 + \zeta_2^* \zeta_2 = \zeta_0 (1-\zeta_0)$, for $\St^4$.

There are compatible toric actions on $\St^4$ and $\Sk^7$ (see e.g. \cite[\S  2.3]{Brain:2013wp}.)
With a slight change of notation, the torus $\IT^2$ acts on $A(\St^4)$ as 
\beq\label{eq:act-S4}
\sigma_s(\zeta_0, \zeta_1, \zeta_2) = (\zeta_0, e^{2\pi i s_1} \zeta_1, e^{2\pi i s_2} \zeta_2), \quad s\in \IT^2 .
\eeq
This action is lifted to a double cover action on $A(\Sk^7)$. The double cover map  $p: \wt \IT^2 \to \IT^2$ is  
given explicitly by $p:(s_1,s_2) \mapsto (s_1+s_2,-s_1+s_2)$.
Then $\wt \IT^2$ acts on the $\psi_a$'s as:
\beq \label{eq:lift-S7}
\wt \sigma: \left(  \psi_1, \psi_2, \psi_3, \psi_4 \right) 
\mapsto \left(e^{2\pi i s_1}~\psi_1, ~e^{-2\pi i s_1}~\psi_2, ~e^{-2\pi i s_2}~\psi_3, ~e^{2\pi i s_2}~\psi_4 \right) 
\eeq

The sense in which the algebra inclusion  $A(\St^4) \subset A(\Sk^7)$  is a nontrivial (faithfully flat)
noncommutative $\SU(2)$ principal bundle is explained in \cite{LS04}.  Here we mention 
that there is a canonical Galois maps $\chi : A(\Sk^7) \ot_{A(\St^4)} A(\Sk^7) \to A(S^{7}_{\theta})\otimes A(SU(2))$ which is invertible. 
The corresponding translation map $\tau : A(\SU(2)) \to A(\Sk^7) \ot_{A(\St^4)} A(\Sk^7)$  on generators is  
\beq
\tau(w) = \Psi^\dagger \pot_{A(\St^4)} \Psi 
\eeq
Indeed, $\chi \circ \tau(w) = \chi(\Psi^\dagger \pot_{A(\St^4)} \Psi) = \Psi^\dagger \delta (\Psi) = \Psi^\dagger \Psi \pot  
w = 1 \ot 1_2 w = 1 \ot w$. 

There is also a copy of the projection $p$ in the opposite algebra:
\beq
q =  \Psi \cdot_{op} \Psi^\dagger =
\begin{pmatrix}
\zeta_0 & 0 & \bar{\mu} \zeta_1 & - \zeta_2^* \\
0 & \zeta_0 &  \mu \zeta_2  & \zeta_1^* \\
 \mu \zeta_1^*& \bar{\mu} \zeta_2^* & 1- \zeta_0 & 0\\
- \zeta_2 & \zeta_1 & 0 & 1- \zeta_0
\end{pmatrix}
\eeq
The difference between $p$ and ${q}$ is due to the multiplication in $A(\Sk^7)$ versus the one in $A(\Sk^7)^{op}$. Indeed:
\beq\label{rojopprojp}
p_{mn} = \sum_r \Psi_{mr} \Psi^\dagger{}_{rn} , \qquad {q}_{mn} = \sum_r\Psi_{mr} \cdot_{op} \Psi^\dagger{}_{rn} = \sum_r \Psi^\dagger{}_{rn} \Psi_{mr} .
\eeq
With the commutation relations \eqref{s7t}, the condition $\Psi^\dagger \cdot \Psi = \II_2 $ leads also to $\Psi^\dagger \cdot_{op} \Psi = \II_2$.

\subsection{Principal bundles over even quantum spheres} \label{sec:-q-shp} 

Even noncommutative spheres $S_\theta^{2n}$, introduced in \cite{CL01}, were shown in \cite{Var01} to be homogeneous spaces of quantum groups $\SO_\theta(2n+1, \IR)$. 
The algebra of coordinate functions of the latter $A=\O(\SO_\theta(2n+1, \IR))$ is the total space algebra of a principal bundle over the algebra $B=\O(S_\theta^{2n})$ for the Hopf (structure) algebra 
$H=\O(\SO_\theta(2n, \IR))$. 
These bundles were worked out in details in \cite[\S 4.1.1]{ABPS17} that we follows with changes. 

Start with the commutative torus $\IT^n$ with generators $t_j, t_j^*$ and relations $t_j t_j^*=t_j^* t_j=1$.
Consider the bi-character $\gamma : \IT^n \times \IT^n \to \U(1)$ defined on generators by
$$
\gamma(t_j, t_k) = e^{\ii \pi \theta_{jk}} , \qquad \theta_{j k} = - \theta_{k j} .
$$
We shall denote $\lambda_{jk} = \gamma(t_j, t_k)^2 =e^{2i\pi \theta_{jk}}$. 
In order for the deformed algebra to still be a Hopf algebra one needs 
a left and a right action of $\IT^n = \diag(t_1, \dots, t_n, t_1^*, \dots, t_n^*)$ (or of 
$\IT^n \times  \IT^n$).
This action then allows one to deform the algebra $\O(\SO(2n))$ into 
an algebra $\O(\SOt(2n))$ described as follows. It has generators $\b{a} = (a_{jk})$, $\b{b} = (b_{jk})$, 
$\b{a}^* = (a_{jk}^*)$, $\b{b}^* = (b_{jk}^*)$ with commutation relations computed to be 
\begin{align}\label{thetaCR}
a_{ij}     a_{kl} & = \lambda_{ik}\lambda_{lj} ~a_{kl}    a_{ij} ,  \qquad
a_{ij}     b^*_{kl} = \lambda_{ki}\lambda_{lj} ~b^*_{kl}    a_{ij} \nn
\\
a_{ij}     b_{kl} & = \lambda_{ik}\lambda_{jl} ~b_{kl}    a_{ij}  ,  \qquad
a_{ij}     a^*_{kl} = \lambda_{ki}\lambda_{jl} ~a^*_{kl}    a_{ij} \nn
\\
b_{ij}     b_{kl} & = \lambda_{ik}\lambda_{lj} ~b_{kl}    b_{ij}  ,  \qquad
b_{ij}     b^*_{kl} = \lambda_{ki}\lambda_{jl} ~b^*_{kl}    b_{ij} 
\end{align}
together with their $*$-conjugated.  In fact, the Hopf algebra structure of $\O(\SO(2n))$ survives the quantization. 
In matrix notation the deformed $\O(\SOt(2n))$ has coproduct and counit given by
\beq\label{eq:mtxM}
M = (M_{J K}) = 
\begin{pmatrix}
\b{a} & \b{b} \\
\b{b}^* & \b{a}^*
\end{pmatrix} , \qquad \Delta(M) = M \pot M , \qquad \varepsilon(M) = \II  .
\eeq
To define an antipode there is a suitable determinant ${\det}_{\theta}(M)$ 
and one can pass to the quotient by the $*$-bialgebra ideal given by
\beq\label{idealM}
I_Q = <M^t Q M - Q , \, M Q M^t - Q , \, {\det}_{\theta}(M) - 1 > , \qquad
Q = \begin{pmatrix}
0 & \II_n \\
\II_n & 0 
\end{pmatrix} = Q^{-1}. 
\eeq
 The $*$-structure is then $*M=Q M Q$ while the antipode is $S(M)= Q M^t Q = M^\dagger $. 
 The previous conditions reads then $M^\dagger M = M M^\dagger = \II_{2n}$.
 
 The odd case of $\O(\SOt(2n+1))$ is defined in a similar fashion by deforming the 
 left and right actions of the torus 
 $\IT^n= \diag(t_1, \dots, t_n, t_1^*, \dots, t_n^*, 1)$ on $\O(\SO(2n+1))$. In matrix notation
 $$
N = (N_{J K}) = 
\begin{pmatrix}
\b{a} & \b{b} & \b{u} \\
\b{b}^* & \b{a}^* & \b{u}^* \\
\b{v} & \b{v}^* & x 
\end{pmatrix} , 
$$
with $n$-component column vectors $\b{u} = (u_j), \b{u}^* = (u_j^*)$ and row vectors $\b{v} = (v_j), \b{v}^* = (v_j^*)$ and a hermitian scalar $x$. The commutation relations are found to be given by
\beq\label{crN}
N_{I J} \, N_{K L} = \lambda_{IK} \lambda_{L J} N_{K L} \, N_{I J} . 
\eeq
Now the coproduct and antipode are as before by $\Delta(N) = N \pot N$ and $\varepsilon(N) = \II$ and one verifies  ideal conditions analogue to the ones in \eqref{idealM}:
\beq\label{ocN}
N^t Q N = Q , \quad N Q N^t = Q , \quad {\det}_{\theta}(N) = 1, \qquad 
Q = \begin{pmatrix}
0 & \II_n & 0 \\
\II_n & 0 & 0 \\
0 & 0 & 1 
\end{pmatrix} = Q^{-1}. 
\eeq
The $*$-structure is $*N=Q N Q$ while the antipode is $S(N)= Q N^t Q = N^\dagger$. Then the previous condution read $N^\dagger N = N N^\dagger = \II_{2n+1}$. 

The Hopf algebra $\O(\SOt(2n))$ is a quantum subgroup of $\O(\SOt(2n+1))$ 
with surjective Hopf algebra morphism
\begin{align}\label{subha}
\pi : \O(\SOt(2n+1)) & \to \O(\SOt(2n)), \nn \\
\begin{pmatrix}
\b{a} & \b{b} & \b{u} \\
\b{b}^* & \b{a}^* & \b{u}^* \\
\b{v} & \b{v}^* & x 
\end{pmatrix} & \mapsto  
\begin{pmatrix}
\b{a} & \b{b} & 0 \\
\b{b}^* & \b{a}^* & 0 \\
0 & 0 & 1 
\end{pmatrix} =: 
\begin{pmatrix}
{\bf h} & {\bf k} & 0 \\
{\bf k}^* & {\bf h}^* & 0 \\
0 & 0 & 1 
\end{pmatrix} = \b{w} .
\end{align}

\noindent
This results into a right coaction of $\O(\SOt(2n))$ on $\O(\SOt(2n+1))$:
\begin{align}
\delta^A : \O(\SOt(2n+1)) & \to \O(\SOt(2n+1)) \ot \O(\SOt(2n)) , \nn \\ 
\delta^A(N) & = N \pot \pi(N). 
\end{align}
The subalgebra $B$ of coinvariant elements, generated by the last column of the matrix $N$: $(u_j, u_j^*, x)$, 
is the algebra $\O(S_\theta^{2n})$ of coordinate functions on a quantum $2n$-sphere $S_\theta^{2n}$.
The commutation relations of the generators follows from \eqref{crN}:
\beq
u_i     u_j= \lambda_{ij} \, u_j     u_i \, , \qquad
u_i^*     u^*_j= \lambda_{ij} \, u^*_j     u^*_i \, , 
 \qquad u_i     u_j^*= \lambda_{ji} \, u_j^*  u_i \, ,
\eeq
and $x$ central. The orthogonality conditions \eqref{ocN} imply the sphere
relation 
$$
\sum_{j=1}^{n} 2 u_j^* u_j + x^2 = 1 ,    
$$
(each generator is normal $u_j^* u_j = u_j u_j^*$).
The algebra extension $\O(S_\theta^{2n}) \subset \O(\SOt(2n+1))$ is a Hopf Galois extension 
for the Hopf algebra $H=\O(\SOt(2n))$ (cf. \cite[\S 4.1.1]{ABPS17}). 
In particular we record the form of the translation map to be used later on.
In components
\begin{align} 
\label{eq:tranmap-h} 
\tau(\b{h}) &= \b{a}^\dagger \pot_B \b{a} + (\b{b}^*)^\dagger \pot_B \b{b}^* + \b{v}^\dagger \pot_B \b{v} , \nn \\
\tau(\b{k}) &= \b{a}^\dagger \pot_B \b{b} + (\b{b}^*)^\dagger \pot_B \b{a}^* + \b{v}^\dagger  \pot_B \b{v}^* .
\end{align}

\section{Algebraic $\theta$-deformations} \label{alg-def}
In this section we review the general scheme of deforming by the action of tori. 
This will be done in the crudest way via $\IZ^n$-graded spaces and deforming relevant structures 
by means of a bi-character. 
The role of $\IZ^n$ comes from it being the Pontryagin dual of the torus $\IT^n$ and one is effectively deforming 
objects with a torus action. More details are e.g. in \cite{Brain:2012} and \cite{Brain:2013wp}. In particular we shall deform principal bundles and associated Hopf algebroids. 
A general scheme of deformations of noncommutative principal bundles via convolution invertible 2-cocycles
$\gamma : H \ot H \to \IC$ on a Hopf  algebra $H$ is in \cite{ABPS17}.

Let $\mathcal T_n$ be the category of $\IZ^n$-graded complex vector spaces whose objects are written as (finite) sums of the kind
\begin{align*}
V = \bigoplus_{ r \in \IZ^n } V_r, \qquad
p_r : V \to V_r .
\end{align*}
Here $p_r$ is the projection onto the  $r$-th component, and most of the time  
we  simply use a subscript to indicate  the projection 
$ v_r = p_r (v)$ for  $v \in  V$.
Morphisms $\psi \in  \Hom(V , W)$ are linear maps that preserve homogeneity,
but not necessarily the degree. 
More precisely, there always exists a group homomorphism 
$\rho_\psi : \IZ^n \to \IZ^n$ such that
\begin{align}
\label{eq:mor-N} 
\psi (V_r) \subset V_{ \rho_\psi (r) }.  
\end{align}

In the $\theta$-deformation literature, one starts with a smooth 
action of a $n$-torus on a Fr\'echet  space $V$, $t \in  \IT^n \mapsto \alpha_t  \in  \mathrm{Aut} (V)$.
The induced $\IZ^n$-grading, based as mentioned on Pontryagin duality,  is given by
projections  $p_r : V \to V_r$, $r \in  \IZ^n$, taking the $r$-th
Fourier coefficients of the vector-valued function $ t \to \alpha_t (v)$,
\begin{align*}
p_r (v) = \int_{ \IT^n }   e^{- 2 \pi i r \cdot t} \alpha_t (v)\, \dd t  , \, \, \, \,  v \in  V,
\end{align*}
where $\dd t$ is the normalized  Lebesgue measure on  $\IT^n$.
Morphisms as in \eqref{eq:mor-N} corresponds to linear maps 
$ \widetilde \psi : V \to W$ which are $\IT^n$-equivariant up-to a 
group homomorphism $ \widetilde \rho_{ \widetilde \psi} : \IT^n \to \mathbb T^n$
so that the diagram commute:
\begin{equation}
\begin{tikzcd}
\mathbb T^n \times V 
\arrow[d, " \rho_{ \widetilde \psi} \times \widetilde \psi   "] \arrow[r] 
&  V \arrow[d, " \widetilde \psi"] \\
\mathbb T^n \times W \arrow[r]                   & W                   
\end{tikzcd} .
\end{equation}

The parameter $\theta$ in a  $\theta$-deformation is a 
$n \times  n$ skew-symmetric matrix and what is actually  needed for the
deformation is the induced bi-character on $\IZ^n$, that is a map 
\begin{align*}
\twcoc_\theta : \IZ^n \times  \IZ^n \to \IT , \quad 
(r, l) \mapsto  \twcoc_\theta (r , l) 
:=  e^{ \pi i \abrac{ \theta r  , l} } ,
\end{align*}
which is a $2$-cocycle in the sense of 
\begin{align}
\label{eq:2-coc-e-theta} 
\twcoc_\theta ( r , l) 
\twcoc_\theta ( r + l , s) =
\twcoc_\theta ( l , s) 
\twcoc_\theta ( r  , s + l ) , \quad r, s, l \in \IZ^n .
\end{align}

The tensor products functor
$ \otimes : \mathcal T_n \times  \mathcal T_n \to \mathcal T_n$
makes $ \mathcal T_n$ into a monoidal category, in which 
the $\IZ^n$-grading is assigned in the usual way,
\beq\label{tot-gra}
( V \otimes W)_s = \bigoplus_{ s = r + l }   V_r \otimes  V_l,  \qquad
s , r, l \in \IZ^n ,
\eeq
by taking the total degree of the natural bi-grading. 
One can deform the tensor functor via the following natural transformation 
$c^\theta$, for any $V , W \in  \mathcal T_n$,  
\begin{align}
c^\theta_{ V , W} :  V \otimes W \to V \otimes_\theta  W , \quad 
v_r \otimes w_l \to   v_r \otimes_\theta w_l := 
\twcoc_\theta (r, l) v_r \otimes w_l 
\label{eq:cVW-dfn} 
\end{align}
which is defined firstly on homogeneous elements and then extended by linearity.
It is not difficult to see that it has an inverse given by
$( c^\theta_{ V , W})^{-1} = c^{- \theta}_{ V , W}$.

Given an algebra $(A , m)$ in $ \mathcal T_n$, with multiplication
$m: A \otimes  A \to A$ preserving the grading,
\begin{align}
\label{eq:A-grd} 
m \brac{ A_r \otimes  A_l }  \subset A_{ r + l} , \, \, \, \, 
r, l \in \IZ^n,
\end{align}
its deformation $A_\theta = ( A , m_\theta )$ maintains the underlying 
(graded) vector space unchanged, but endowed with a  new multiplication: 
\begin{align}
\label{eq:m_theta-defn} 
m_\theta = m \circ c^\theta_{ A , A} : 
A \otimes  A \xrightarrow{ c^\theta_{ A , A}} A \otimes  A 
\xrightarrow{m} A .
\end{align}
As shown in \eqref{eq:cVW-dfn},
$m$ is twisted by a phase factor on homogeneous elements: 
\begin{align}
\label{eq:m_theta-r-l} 
m_\theta ( a_r , \tilde a_l )  = \twcoc_\theta (r , l)
m ( a_r  , \tilde a_l) ,
\end{align}
which, provided that  $m$ is commutative, leads to the commutation relations: 
\begin{align}
\label{eq:com-rltn-A} 
m_\theta ( a_r , \tilde a_l ) =
\twcoc_\theta (r , l)^2
m_\theta ( a_l , \tilde a_r ) .
\end{align}
The required associativity for $m_\theta$ follows directly
from the  $2$-cocycle condition in  \eqref{eq:2-coc-e-theta}.
For easy of notation in the following we shall denote $m_\theta ( a_l, \tilde a_r )= a_l \cdot_\theta \tilde a_r$.

Clearly, $\twcoc_\theta (r , \pm r) = 1$ since $\theta$ is skew-symmetric. 
We record this simple observation as a lemma which will be used often later on.
\begin{lem}
\label{lem:mtheta-m} 
For homogeneous element $a , \tilde a \in  A$ of the same degree 
or of the opposite degree, that is $\deg a \pm \deg \tilde a =0$, 
the deformed multiplication agrees with the original one:
\begin{align}
\label{eq:mtheta-m} 
m_\theta  (a , \tilde a) =  m( a , \tilde a) .
\end{align}
In particular, \eqref{eq:mtheta-m} holds whenever the product 
$m(a , \tilde a) \in A_{ 0 } $ belongs to the degree zero component, in this case, 
$a , \tilde a$ are not required to be homogeneous. 
\end{lem}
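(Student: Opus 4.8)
The plan is to reduce everything to the phase factor computation $\twcoc_\theta(r,l)=1$ when $r+l=0$. First I would observe that it suffices to prove the statement for homogeneous $a,\tilde a$: for the case where $m(a,\tilde a)\in A_0$ but $a,\tilde a$ need not be homogeneous, I would decompose $a=\sum_r a_r$ and $\tilde a=\sum_l \tilde a_l$ and note that $m(a,\tilde a)=\sum_{r,l}m(a_r,\tilde a_l)$; projecting onto the degree-zero component kills every term with $r+l\neq 0$, and by bilinearity of both $m$ and $m_\theta$ the desired identity follows once it is known on homogeneous pairs $(a_r,\tilde a_l)$ with $r+l=0$, i.e.\ $l=-r$.

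Next, for homogeneous $a=a_r$, $\tilde a=\tilde a_l$ with $\deg a\pm\deg\tilde a=0$, I would invoke \eqref{eq:m_theta-r-l}, which gives
\begin{align*}
m_\theta(a_r,\tilde a_l) = \twcoc_\theta(r,l)\, m(a_r,\tilde a_l),
\end{align*}
and then use that $l=\pm r$, so $\twcoc_\theta(r,l)=\twcoc_\theta(r,\pm r)=e^{\pm\pi i\langle\theta r,r\rangle}=1$ because $\theta$ is skew-symmetric (hence the bilinear form $\langle\theta\,\cdot\,,\cdot\,\rangle$ is alternating and vanishes on the diagonal). This is exactly the ``simple observation'' flagged just before the lemma, so the homogeneous case is immediate.

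Assembling, \eqref{eq:mtheta-m} holds on each homogeneous pair contributing to a degree-zero product, and linearity propagates it to the non-homogeneous case whenever $m(a,\tilde a)\in A_0$. There is essentially no obstacle here; the only point requiring a word of care is the bookkeeping in the non-homogeneous reduction, namely that one must project the full double sum $\sum_{r,l}m(a_r,\tilde a_l)$ onto degree zero \emph{before} comparing with $m_\theta$, using that $m_\theta(a_r,\tilde a_l)$ also lands in $A_{r+l}$ by \eqref{eq:A-grd} together with \eqref{eq:m_theta-defn}, so the degree-zero parts of $m(a,\tilde a)$ and $m_\theta(a,\tilde a)$ are sums over the same index set $\{(r,-r)\}$ and agree term by term.
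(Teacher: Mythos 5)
Your treatment of the homogeneous case is correct and is precisely the paper's own argument: the lemma is recorded immediately after the observation that $\twcoc_\theta(r,\pm r)=e^{\pm\pi i\abrac{\theta r,r}}=1$ by skew-symmetry of $\theta$, and combining this with \eqref{eq:m_theta-r-l} is all there is to that part.

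The reduction you propose for the ``in particular'' clause, however, does not close. What your bookkeeping actually establishes is that the degree-zero components agree: $p_0(m_\theta(a,\tilde a))=\sum_r \twcoc_\theta(r,-r)\,m(a_r,\tilde a_{-r})=p_0(m(a,\tilde a))$. To upgrade this to $m_\theta(a,\tilde a)=m(a,\tilde a)$ from the hypothesis $m(a,\tilde a)\in A_0$ you would also need $m_\theta(a,\tilde a)\in A_0$, i.e.\ $\sum_{r+l=s}\twcoc_\theta(r,l)\,m(a_r,\tilde a_l)=0$ for every $s\neq 0$; this does not follow from $\sum_{r+l=s}m(a_r,\tilde a_l)=0$, because on the fibre $r+l=s$ one has $\twcoc_\theta(r,s-r)=e^{\pi i\abrac{\theta r,s}}$, which is constant in $r$ only when $s=0$ (or $\theta s=0$), so a cancellation among the cross terms of $m$ need not survive the twist. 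A toy example: with $\deg x=(1,0)$, $\deg y=(0,1)$, $x^2=y^2=0$ and $xy=yx\neq 0$, the elements $a=x+y$, $\tilde a=y-x$ give $m(a,\tilde a)=0\in A_0$ while $m_\theta(a,\tilde a)=\bigl(\twcoc_\theta(e_1,e_2)-\twcoc_\theta(e_2,e_1)\bigr)xy\neq 0$ for generic $\theta$. The clause --- and your term-by-term argument --- is valid under the stronger hypothesis that every bi-homogeneous component of $a\ot\tilde a$ already has total degree zero, i.e.\ $a\ot\tilde a\in\bigoplus_r A_r\ot A_{-r}$; that is what actually holds in all the paper's applications (e.g.\ the translation map in Proposition \ref{prop:comodalg-theta}), where the homogeneous case of the lemma is applied to each term of such a sum. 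You should either assume that stronger hypothesis explicitly or weaken the conclusion to the equality of degree-zero components.
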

In a similar manner, for an $A$-module  $V$ in  $\mathcal T_n$ such that 
the action $ \rhd : A \otimes  V \to V$ preserves the grading as in 
\eqref{eq:A-grd}, the deformation
$ \rhd_\theta :=  \rhd \circ c^\theta_{ A , V}  $ makes $V_\theta$ into an
$A_\theta$-module.
The `associativity' (the action properties) for $\rhd_\theta$ again follows directly
from the  $2$-cocycle condition in \eqref{eq:2-coc-e-theta}. There is clearly a right-module version of this.

And finally, if $(C,\Delta)$ is a coalgebra in  $\mathcal T_n$ with $\Delta: C \to C\ot \IC$ that preserves the degree in the sense of \eqref{tot-gra}: $\Delta(c_s) = \sum_{r+l=s} \one{c_r} \ot \two{c_l}$ the deformation
$\Delta_\theta :=  c^{-\theta}_{C, C} \circ \Delta$ makes $C_\theta$ into a coalgebra with co-associativity again 
following from the  $2$-cocycle condition.

The next step in deforming a bialgebra (or even a Hopf algebra) structures needs some extra care. 
Also, for deforming a Hopf--Galois extension with structure Hopf algebra $H$, and aiming at including  
both examples in \S\S ~\ref{sec:HG-SU2} and \ref{sec:-q-shp}, 
it turns out that the construction of gradings on the algebra involved and the related assumptions are
quite different depending on whether the Hopf algebra is deformed or not. 

We will break the discussion into two scenarios to cover the constructions 
of both \S\S ~\ref{subsec:ScI} and \ref{subsec:ScII}
in which our aim is to get a (possible new) structure Hopf algebra with related comodule
algebras out of the  $\theta$-deformation scheme. 
After that, the deformation of the Ehresmann--Schauenburg bialgebroids can be handled
in a uniform way, and will be carried out in \S\S ~\ref{subsec:ES-alg} and \ref{subsec:flip-atpd}.

\subsection{Scenario I: No Hopf algebra is deformed}
\label{subsec:ScI} 

We start with a setting in which the Hopf algebra $H$ is not touched.
Thus, we assume that $H$ has trivial  $\IZ^n$-grading and a $H$-comodule algebra 
$A$ is $\IZ^n$-graded so that the multiplication preserves the grading as in 
\eqref{eq:A-grd}, 
and the coaction $\delta^A : A \to A \otimes  H$ also behaves the same way:
\beq
\label{eq:coprd-grd} 
\delta^A ( A_r ) \subset A_r \otimes  H .
\eeq 
Thus when writing $\delta^A(a) = \zero{a} \ot \one{a}$ one has $\deg a = \deg \zero{a} $.

The following is an almost free version of \cite[Cor. 3.16]{ABPS17}. 
\begin{prop} \label{prop:comodalg-theta}
Let $A_\theta = ( A , m_\theta)$ be the deformation of  $A$ as in 
\eqref{eq:m_theta-defn}. It is still a $H$-comodule algebra with 
the same coaction treated as 
$\delta^A: A_\theta \to A_\theta \otimes  H$.
Then, the coinvariant subspace $B = A^{\mathrm{co} H}$ remains the
same and the $\theta$-multiplication can be restricted onto  $B$ to form 
$B_\theta = (B, m_\theta) $.
Moreover, if the starting pair $(A , H)$ is a Hopf--Galois extension with algebra of coinvariant elements $B$,
such is its deformation $( H , A_\theta)$, with algebra of coinvariant elements $B_\theta$.
\end{prop}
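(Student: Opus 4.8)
The plan is to verify the three claims in order, each time exploiting the single structural fact that the coaction is degree-preserving in the sense of \eqref{eq:coprd-grd}, together with Lemma~\ref{lem:mtheta-m}. For the first claim, that $\delta^A$ is still an algebra map $A_\theta \to A_\theta \ot H$, I would compute $\delta^A(m_\theta(a_r, \tilde a_l))$ on homogeneous elements: by \eqref{eq:m_theta-r-l} this equals $\twcoc_\theta(r,l)\,\delta^A(m(a_r,\tilde a_l)) = \twcoc_\theta(r,l)\, \zero{a}_r \zero{\tilde a}_l \ot \one{a}\,\one{\tilde a}$, where I have used that $\zero{a}$ has the same degree $r$ as $a$ (and likewise for $\tilde a$). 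On the other side, the product in $A_\theta \ot H$ is $m_\theta$ on the first leg and $m_H$ on the second, so $\delta^A(a_r)\cdot\delta^A(\tilde a_l) = \twcoc_\theta(r,l)\, m(\zero{a}_r,\zero{\tilde a}_l) \ot \one{a}\,\one{\tilde a}$; the two agree. Coassociativity and counitality of $\delta^A$ are unchanged since the underlying coalgebra $H$ and the linear map $\delta^A$ are untouched. That $H$ is untouched (trivial grading) is what makes the second tensor leg carry no twist.

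Next, the coinvariants. Since the linear map $\delta^A$ and the element $b \ot 1_H$ are the same whether we regard $A$ or $A_\theta$, the set $B = \{b \in A : \delta^A(b) = b \ot 1_H\}$ is literally unchanged as a subspace. It is a subalgebra of $A_\theta$: if $b, b'\in B$ then $\delta^A(m_\theta(b,b')) = m_\theta(\zero{b},\zero{b'}) \ot \one{b}\,\one{b'} = m_\theta(b,b') \ot 1_H$ by the computation just done, so $m_\theta(b,b') \in B$; one should note $m_\theta$ restricts to $B$ because $B$ is a $\IZ^n$-graded subspace (it is the image of the grading-preserving idempotent picking out the coinvariants, or directly: $\delta^A(b)=b\ot 1_H$ forces each homogeneous component of $b$ to be coinvariant). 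Hence $B_\theta = (B, m_\theta)$ is defined and is a subalgebra of $A_\theta$.

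Finally, the Hopf--Galois property. I would show the canonical map $\chi_\theta : A_\theta \ot_{B_\theta} A_\theta \to A_\theta \ot H$ is bijective by writing down its inverse from the translation map $\tau$ of the original extension. The key point is that $\chi_\theta(\tilde a \ot_{B_\theta} a) = m_\theta(\tilde a, \zero{a}) \ot \one{a}$, and on homogeneous elements $m_\theta(\tilde a_l, \zero{a}_r) = \twcoc_\theta(l,r)\, \tilde a_l\,\zero{a}_r$; so $\chi_\theta$ differs from $\chi$ only by these phase factors, which are invertible. Concretely I expect to define $\tau_\theta : H \to A_\theta \ot_{B_\theta} A_\theta$ as $\tau$ composed with the appropriate $c^\theta$-twist on $A \ot A$, and then check $\chi_\theta \circ \tau_\theta$ and the left $A_\theta$-linear extension give the identity, using \eqref{p7} and \eqref{p5} for the original $\tau$ together with the $2$-cocycle identity \eqref{eq:2-coc-e-theta} to reshuffle the phases. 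One subtlety to address: the balanced tensor products $A \ot_B A$ and $A_\theta \ot_{B_\theta} A_\theta$ are over different (though same-underlying-space) algebras, so I must check the twisted map $c^\theta_{A,A}$ descends to the quotient, i.e. is compatible with the relation $a r \ot a' \sim a \ot r a'$ for $r \in B$; this follows because $B$ sits in degree-zero-compatible position relative to the twist in the relevant computations, or more carefully from Lemma~\ref{lem:mtheta-m} applied to the homogeneous components of $r$. The main obstacle, and the step deserving the most care, is precisely this well-definedness of the maps on the $B_\theta$-balanced tensor product and the bookkeeping of $\twcoc_\theta$-phases when verifying $\chi_\theta \circ \tau_\theta = \id$; once the phases are tracked, invertibility is automatic since each phase is a unit in $\IC$. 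Faithful flatness of $A_\theta$ over $B_\theta$ transfers from that of $A$ over $B$ because $A_\theta \cong A$ as a $\IZ^n$-graded right $B$-module up to the invertible twist $c^\theta$, so the two module categories are equivalent.
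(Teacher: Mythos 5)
Your proposal is correct, and its first two steps are exactly what the paper dismisses as ``evident'': the degree computation showing that $\delta^A$ is still an algebra map $A_\theta\to A_\theta\ot H$ (using $\deg \zero{a}=\deg a$ from \eqref{eq:coprd-grd} and the trivial grading of $H$), and the observation that $B$ is a graded subspace of $A$ on which $m_\theta$ restricts. For the Hopf--Galois part you take a genuinely more roundabout route than the paper. You factor $\chi_\theta=\chi\circ c^\theta_{A,A}$ and propose to invert it by twisting $\tau$ with $c^\theta$ and tracking phases via the $2$-cocycle identity; the paper instead makes the single observation that, since $H$ sits in degree zero and $\tau$ preserves degree, the two legs of $\tau(h)=\tuno{h}\ot_B\tdue{h}$ may be taken of \emph{opposite} degrees, so by Lemma \ref{lem:mtheta-m} the deformed product agrees with the undeformed one on them and the very same, untwisted $\tau$ already satisfies $\chi_\theta(\tau(h))=\chi(\tau(h))=1\ot h$. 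If you carry out your bookkeeping you will discover precisely this: your $\tau_\theta$ coincides with $\tau$, because the only phase that appears is $\twcoc_\theta(-r,r)=1$. The paper's shortcut buys the cleaner statement that the translation map is literally unchanged; your factorization buys an immediate proof of bijectivity (compose with $c^{-\theta}_{A,A}$) without appealing to left-$A$-linearity arguments.

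One justification in your last paragraph needs repair. The reason $c^\theta_{A,A}$ descends to a map $A_\theta\ot_{B_\theta}A_\theta\to A\ot_B A$ is \emph{not} that ``$B$ sits in degree-zero-compatible position'': in Scenario I the coinvariants carry a genuinely nontrivial grading (e.g.\ $\deg\zeta_1=(1,0)$ in Example \ref{rem-dtm}); nor does Lemma \ref{lem:mtheta-m} help, since the three degrees entering the relation $a\cdot_\theta b\ot a'\sim a\ot b\cdot_\theta a'$ are arbitrary. What actually saves you is that $\twcoc_\theta$ is a bicharacter: for $\deg a=l$, $\deg b=s$, $\deg a'=r$ one has $\twcoc_\theta(l+s,r)\,\twcoc_\theta(l,s)=\twcoc_\theta(l,s+r)\,\twcoc_\theta(s,r)$ by bilinearity of $\abrac{\theta\,\cdot,\cdot}$, which is exactly the compatibility needed for $c^\theta_{A,A}$ to carry the ideal defining $A_\theta\ot_{B_\theta}A_\theta$ onto the one defining $A\ot_B A$. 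With that substitution your argument is complete.
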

\begin{proof}
The first part is evident. As for the final (almost evident) statement, consider the starting canonical map 
$\chi : A \ot _B A \longrightarrow A \ot H , \quad \chi (\tilde{a} \ot_B a) = \tilde{a} \zero{a} \ot \one{a}$ and define
$$
\chi_\theta : A_\theta \ot _{B_\theta} A_\theta  \longrightarrow A_\theta \ot H , \quad \chi_\theta (\tilde{a} \ot_B a) = \tilde{a} \cdot_\theta \zero{a} \ot \one{a}.
$$ 
Then, for $h \in H$ consider the starting canonical map $\tau(h) = \tuno{h} \ot_B \tdue{h}$ 
with components (sum of terms) of opposite degree $\deg \tuno{h} = - \deg \tdue{h}$ since 
$H$ has zero degree which is preserved by $\tau$. 
Then, from Lemma \ref{lem:mtheta-m},  \begin{align*}
\chi_\theta (\tuno{h} \ot_B \tdue{h}) & = \tuno{h} \cdot_\theta \zero{\tdue{h}} \ot \one{\tdue{h}} = 
\tuno{h} \zero{\tdue{h}} \ot_B \one{\tdue{h}} \\
& = \chi(\tuno{h}\ot_B \tdue{h}) 
\end{align*}
(the latter being just $1 \ot h$ from \eqref{p7}) and $\chi_\theta$ is invertible if and only if $\chi$ is.

Thus the translation map of $\chi_\theta$ is the same as the starting 
undeformed one that can be considered as a map 
$\tau : H \to A_\theta \ot_{B_\theta}  A_\theta$.
 \end{proof}

\begin{rem}[On the degree of the translation map]
The fact that, in writing for the translation map $\tau(h) = \tuno{h} \ot_B \tdue{h}$, one can take 
$\deg \tuno{h} = - \deg \tdue{h}$ does not depend on the representatives: suppose 
$\tuno{h} \ot_B \tdue{h} = \tuno{\tilde h} b \ot_B \tdue{h} = 
\tuno{\tilde h} \ot_B b \tdue{h} = \tuno{\tilde h} \ot_B \tdue{\tilde h}$. Then it follows that 
$\deg \tuno{h} = - \deg \tdue{h}$ if and only if $\deg \tuno{\tilde h} = - \deg \tdue{\tilde h}$.
\end{rem}

\begin{exa}[Noncommutative Hopf-fibration] \label{rem-dtm}
As mentioned, the $\IZ^n$-grading we consider is derived from a torus action.
To construct the $\SU(2)$-fibration $S^7_\theta \to S^4_\theta$ in \S\ref{sec:HG-SU2},
one begins with a two torus action defined in \eqref{eq:act-S4} and \eqref{eq:lift-S7},
in which all generators in \ref{s4t} and \ref{s7t} are $\IT^2$-eigenfunctions.  
Then, 
\begin{align*}
& \deg \zeta_0 = \deg \zeta_0^* = 0
\\
&  \deg\zeta_1 = (1, 0), \, \, \, \, \deg \zeta_2 = (0 , 1)
\\
& \deg \psi_1 =   - \deg \psi_2  
= (1,0)  ,  \, \, \, \, 
\deg \psi_4 =   -\deg \psi_3   
= (0,1)  .
\end{align*}
The deformation matrix just reads  
$\begin{bmatrix} 0& - \theta \\ \theta & 0
\end{bmatrix} $, with $\theta \in \mathbb R$. 
We have, according to   \eqref{eq:m_theta-r-l},
\begin{align*}
\zeta_\mu \cdot_\theta \zeta_\nu = \sqrt{ \lambda_{ \mu \nu}}
\zeta_\mu \zeta_\nu, \, \, \, \, 
\psi_a \cdot_\theta \psi_b = \sqrt{ \lambda'_{a b}}
\psi_a \psi_b ,
\end{align*}
so that the commutation relations \ref{s4t} and \ref{s7t}
follow immediately from \eqref{eq:com-rltn-A}.
It is also worth noting that the double covering between
the two actions \eqref{eq:act-S4} and \eqref{eq:lift-S7}
is exactly dual to the following map $\IZ^2 \to \IZ^2$: 
\begin{align*}
(1,0) &= \deg \zeta_1 \mapsto (1, 1) = \deg  \psi_1 \psi^*_3 
= \deg \psi_2^* \psi_4, \\
(0,1) &= \deg \zeta_2 \mapsto (-1,1) = \deg  \psi_2 \psi^*_3 
= \deg \psi_1^* \psi_4.
\end{align*}
revealed  in the  embedding $A(S^4_\theta) \to A(S^7_\theta)$ given by 
\ref{sub}.
\qed\end{exa}

The details of the construction of the Ehresmann--Schauenburg bialgebroid related to the Hopf--Galois extension 
$(H, A_\theta)$, are postponed to \S\ref{subsec:ES-alg}. 

\subsection{Scenario II: Deforming Hopf algebras and homogeneous spaces}
\label{subsec:ScII} 

Unlike the previous section, in order to deform a Hopf algebra $H$
(or in a more accurate context, to only deform the algebra structure of $H$),
in a way that the all compatibilities axioms for Hopf algebras remains,
one needs a more delicate setup for the grading and the $\theta$-matrix.
To motivate the long list of requisites below,  
the reader is referred to App. \ref{sec:Qgrp}, where we recall the
original formulation in terms of torus actions due to Rieffel
\cite{Rieffel:1993tw}.

Let  $H = \bigoplus_{ r, s \in \IZ^n} H_{ (r,s)} $
be a Hopf algebra with a bi-grading of $\IZ^n$ (in particular, a grading of $\IZ^{2n}$), 
such that the group homomorphism $\rho_{ \psi}$ on
gradings in \eqref{eq:mor-N} induced via the structure maps of $H$ are given as follows:
\begin{itemize}
\item[i)] the multiplication preserves the grading as in \eqref{eq:A-grd}
\begin{align}
\label{eq:prd-bigrd} 
m ( H_{ (r ,s)} \otimes  H_{ (p , q)} )
\subset H_{ (r + p , s + q)} \, , 
\end{align}
\item[ii)] the coproduct $\Delta : H \to H \otimes  H$, is required to be such that
\begin{align}
\label{eq:coprd-bigrd} 
\Delta \brac{  H_{ (r, l )}}  \subset 
\bigoplus_{ s \in \IZ^n} H_{ (r , s)} \otimes  H_{ (s, l)}  \, ,
\end{align}
\item[iii)]
the counit factors through the projection:
\begin{align}
\label{eq:counit-bigrd} 
\varepsilon: H \to \bigoplus_{ s \in \IZ^n}  H_{ (s , s)}  \to \mathbb{C},
\end{align}
that is $ \varepsilon ( h_{ (r , l)} ) = 0$ for all homogeneous elements 
$h_{ (r , l)} $ with $r \neq l$ ,

\item[iv)] for the antipode and the $*$-operator (if $H$ has one), one assumes  
\begin{align}
\label{eq:antp-star-gd} 
S ( H_{ (r , l )} )  \subset H_{ (-l , - r) } ,
\qquad
* ( H_{ (r , l)} )  \subset H_{ (-r , -l) } .
\end{align}
\end{itemize}

\begin{rem}[On condition \eqref{eq:coprd-bigrd}]
From the general assigning of the total degree in  \eqref{tot-gra}, on the right hand side of \eqref{eq:prd-bigrd} one would have 
$\bigoplus_{ a+c = r, b+d = l } H_{ (a , b)} \otimes  H_{ (c, d)}$.
The subspaces
$\bigoplus_{ s \in \IZ^n }  H_{ (r , s)} \otimes  H_{ (s, l)} $ of 
$H \otimes  H$ when summed on the indices $r,l$ corresponds to 
the subspace $ \mathcal D$ in \eqref{eq:subspace-C} in which the coproduct lands.
\qed\end{rem}

Next, 
let $\theta$ be  a $n \times n$ skew-symmetric matrix and put
\begin{align}
\label{eq:Theta-matrix} 
\Theta = 
\begin{bmatrix}  \theta   & 0 \\   0 & - \theta   \end{bmatrix} ,
\end{align}
so that their $2$-cocycles are related as follows:
for $r  = (r_1 , r_2 )$ and $l = (l_1 , l_2)$,
\begin{align*}
\twcoc_{\Theta} ( r , l )
= 
\twcoc_\theta ( r_1 , l_1  )
\twcoc_{ -\theta  }  ( r_2 , l_2). 
\end{align*}
Denote by $H_{\Theta} = (H , \cdot_{\Theta} )$ the deformed algebra, with the new 
multiplication given, on homogeneous elements $h_r , g_l $ of degree 
$r , l \in  \IZ^n$ respectively, by
\begin{align}
\label{eq:Theta-hr-gl} 
h_r \cdot_{\Theta} g_l  = 
\twcoc_\Theta (r ,l)  
h_r g_l  =
\twcoc_\theta ( r_1 , l_1  )
\twcoc_{ - \theta }  ( r_2 , l_2) 
h_r g_l  .
\end{align}

\begin{lem}
\label{lem:coprd-prd} 
With the condition in \eqref{eq:coprd-bigrd}, 
the (undeformed) coproduct $\Delta$ is still an algebra homomorphism for the product $ \cdot_{\Theta}$: 
\begin{align*}
\Delta ( h \cdot_{\Theta} g) = 
\one{h} \cdot_\Theta \one{g} \otimes \two{h} \cdot_\Theta \two{g}.
\end{align*}
\end{lem}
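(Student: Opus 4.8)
The plan is to reduce to homogeneous elements and then merely track phase factors; no deep input is needed beyond the definitions and the bigrading hypothesis \eqref{eq:coprd-bigrd}. By $\IC$-bilinearity of both sides in $h$ and in $g$, it suffices to establish the identity when $h = h_r$ and $g = g_l$ are homogeneous of degrees $r = (r_1, r_2)$ and $l = (l_1, l_2)$ in $\IZ^n$ (viewed inside $\IZ^{2n}$). On the left, I would first use that the undeformed $\Delta$ is an algebra homomorphism for the undeformed product together with the defining formula \eqref{eq:Theta-hr-gl} to write
\[
\Delta(h_r \cdot_\Theta g_l) = \twcoc_\Theta(r, l)\, \Delta(h_r)\, \Delta(g_l) .
\]

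Next I would expand the two coproducts using \eqref{eq:coprd-bigrd}: choosing homogeneous Sweedler representatives, $\Delta(h_r) = \sum \one{h}_{(r_1, s)} \ot \two{h}_{(s, r_2)}$ and $\Delta(g_l) = \sum \one{g}_{(l_1, s')} \ot \two{g}_{(s', l_2)}$ with $s, s'$ ranging over $\IZ^n$. Multiplying componentwise in $H \ot H$ and then re-expressing each leg via \eqref{eq:Theta-hr-gl}, the right-hand side of the asserted identity becomes
\[
\sum \twcoc_\Theta\big((r_1, s), (l_1, s')\big)\, \twcoc_\Theta\big((s, r_2), (s', l_2)\big)\ \one{h}_{(r_1, s)} \one{g}_{(l_1, s')} \ot \two{h}_{(s, r_2)} \two{g}_{(s', l_2)} .
\]
Comparing with the display above, the lemma is reduced to the scalar identity
\[
\twcoc_\Theta\big((r_1, s), (l_1, s')\big)\, \twcoc_\Theta\big((s, r_2), (s', l_2)\big) = \twcoc_\Theta(r, l), \qquad \text{for all } s, s' \in \IZ^n .
\]

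Finally I would verify this last identity by unpacking $\twcoc_\Theta$ through the block form \eqref{eq:Theta-matrix}, namely $\twcoc_\Theta\big((a_1, a_2), (b_1, b_2)\big) = \twcoc_\theta(a_1, b_1)\, \twcoc_{-\theta}(a_2, b_2)$: the left-hand side equals $\twcoc_\theta(r_1, l_1)\, \twcoc_{-\theta}(s, s')\, \twcoc_\theta(s, s')\, \twcoc_{-\theta}(r_2, l_2)$, and since $\twcoc_{-\theta} = (\twcoc_\theta)^{-1}$ (both are of the form $e^{\pm \pi \ii \abrac{\theta \,\cdot\,, \,\cdot\,}}$) the two middle factors carrying the summation indices $s, s'$ cancel, leaving precisely $\twcoc_\theta(r_1, l_1)\, \twcoc_{-\theta}(r_2, l_2) = \twcoc_\Theta(r, l)$. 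The computation has no real obstacle: the only points that deserve a word of care are that the Sweedler representatives may be chosen homogeneous (immediate from the direct-sum decomposition together with \eqref{eq:coprd-bigrd}) and the phase cancellation itself, which is exactly the reason the auxiliary matrix was chosen in the skew block-diagonal form; note that the $2$-cocycle property \eqref{eq:2-coc-e-theta} is not needed here, having already been used to secure the associativity of $\cdot_\Theta$.
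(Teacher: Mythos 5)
Your proposal is correct and follows essentially the same route as the paper: reduce to homogeneous elements with homogeneous Sweedler legs via \eqref{eq:coprd-bigrd}, and observe that the two phase factors carrying the internal summation indices cancel because $\twcoc_{-\theta}=\twcoc_\theta^{-1}$, leaving exactly $\twcoc_\Theta(\deg h,\deg g)$. Your closing remark that the $2$-cocycle identity \eqref{eq:2-coc-e-theta} plays no role here is also consistent with the paper's argument.
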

\begin{proof}
It suffices to work with homogeneous elements. 
Take $h , g \in H$, with $\deg h = (r , l)$ and  $\deg g = (p , q)$ with  
their components in the coproduct, $\Delta x = \one{x} \ot \two{x}$ in Sweedler
notation, 
that can be assumed to be homogeneous as well:
\begin{align*}
\deg \one{h} = (r , s) , \quad  
\deg \two{h} = (s , l)  , \quad  
\deg \one{g} = (p , k)   , \quad   
\deg \two{g} = (k , q) .
\end{align*}
where only $s, k$ vary within to the components.
Then, 
\begin{align*}
\brac{ \one{h} \otimes  \two{h} } \cdot_\Theta
\brac{ \one{g} \otimes  \two{g} }
& =
\one{h} \cdot_\Theta \one{g} \otimes  \two{h} \cdot_\Theta \two{g}
\\
& =  
\twcoc_\theta ( r, p )
\twcoc_{-\theta} (s  , k)
\twcoc_{\theta} (s  , k)
\twcoc_{ - \theta} ( l, q)
\one{h}  \one{g}   \two{h}  \two{g} 
\\
& =  
\twcoc_{ - \theta} ( l, q)
\twcoc_\theta ( r, p )
\one{h}  \one{g}   \two{h}  \two{g} \\
& =
\twcoc_{ - \theta} ( l, q)
\twcoc_\theta ( r, p )
\Delta ( h g)  \\
& =   
\Delta ( h \cdot_\Theta g) ,
\end{align*}
as stated. \end{proof}

\begin{prop}
\label{prop:H-Theta} 
By  $\theta$-deforming the multiplication of $H$ as in \eqref{eq:Theta-hr-gl},
we obtain a new Hopf algebra
$H_\Theta = (H , \cdot_\Theta , \Delta , \varepsilon , S)$ 
with the same coproduct, counit and antipode.
\end{prop}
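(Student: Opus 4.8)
The plan is to verify, one structure map at a time, that the undeformed coproduct, counit and antipode stay compatible with the new product $\cdot_\Theta$. The mechanism used throughout is that the relevant $2$-cocycle trivialises on ``diagonal'' degrees --- $\twcoc_\Theta((p,p),(q,q)) = \twcoc_\theta(p,q)\,\twcoc_{-\theta}(p,q) = 1$ --- and, more generally, that $\twcoc_\theta(s,-s) = \twcoc_\theta(-s,s) = 1$ by skew-symmetry of $\theta$, as already recorded before Lemma \ref{lem:mtheta-m}.

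First I would note that $(H, \cdot_\Theta)$ is an associative unital algebra: associativity is the $2$-cocycle identity \eqref{eq:2-coc-e-theta} for $\Theta$, applied exactly as for $m_\theta$ in \eqref{eq:m_theta-defn}; and since the multiplication preserves the bi-grading, the degree-$(0,0)$ component of $1_H$ is already a two-sided unit, so $1_H \in H_{(0,0)}$ and $\twcoc_\Theta(0, \cdot) = \twcoc_\Theta(\cdot, 0) = 1$, whence $1_H$ is still the unit. That $\Delta$ is an algebra map $H_\Theta \to H_\Theta \ot H_\Theta$ (componentwise product) is precisely Lemma \ref{lem:coprd-prd}, and coassociativity together with $\Delta(1_H) = 1_H \ot 1_H$ is untouched. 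For the counit I would take homogeneous $h$ of degree $(r_1, r_2)$ and $g$ of degree $(l_1, l_2)$ and use multiplicativity of the old $\varepsilon$: $\varepsilon(h \cdot_\Theta g) = \twcoc_\Theta((r_1,r_2),(l_1,l_2))\, \varepsilon(hg) = \twcoc_\Theta((r_1,r_2),(l_1,l_2))\, \varepsilon(h)\varepsilon(g)$. By \eqref{eq:counit-bigrd} this vanishes unless $r_1 = r_2$ and $l_1 = l_2$, in which case the phase is $1$; hence $\varepsilon(h \cdot_\Theta g) = \varepsilon(h)\varepsilon(g)$, and the two counit axioms hold since $\Delta, \varepsilon$ are unchanged.

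For the antipode I would first check that $S$ is a $\cdot_\Theta$-anti-homomorphism: with $h, g$ as above, $S(h \cdot_\Theta g) = \twcoc_\Theta((r_1,r_2),(l_1,l_2))\, S(g)S(h)$, while by \eqref{eq:antp-star-gd} $S(g)$ and $S(h)$ have degrees $(-l_2,-l_1)$ and $(-r_2,-r_1)$, so $S(g) \cdot_\Theta S(h) = \twcoc_\Theta((-l_2,-l_1),(-r_2,-r_1))\, S(g)S(h)$; skew-symmetry of $\theta$ (and invariance of $\twcoc_\theta$ under $(r,l)\mapsto(-r,-l)$) makes the two scalars agree. Then, for the antipode identities, I would fix homogeneous $h$ of degree $(r_1, r_2)$ and, invoking \eqref{eq:coprd-bigrd}, write $\Delta h = \one{h} \ot \two{h}$ with $\deg \one{h} = (r_1, s)$ and $\deg \two{h} = (s, r_2)$, the index $s$ running over the summands. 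Then $S(\one{h})$ has degree $(-s, -r_1)$, so $S(\one{h}) \cdot_\Theta \two{h} = \twcoc_\theta(-s, s)\, \twcoc_{-\theta}(-r_1, r_2)\, S(\one{h})\two{h} = \twcoc_{-\theta}(-r_1, r_2)\, S(\one{h})\two{h}$, the scalar being \emph{independent of} $s$; summing over $s$ and using the undeformed identity $S(\one{h})\two{h} = \varepsilon(h) 1_H$ gives $S(\one{h}) \cdot_\Theta \two{h} = \twcoc_{-\theta}(-r_1, r_2)\, \varepsilon(h) 1_H$, and since $\varepsilon(h) \neq 0$ forces $r_1 = r_2$ and hence $\twcoc_{-\theta}(-r_1, r_1) = 1$, this equals $\varepsilon(h) 1_H$. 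The identity $\one{h} \cdot_\Theta S(\two{h}) = \varepsilon(h) 1_H$ is entirely analogous (now the constant phase is $\twcoc_\theta(r_1, -r_2)$). Assembling the pieces yields that $H_\Theta = (H, \cdot_\Theta, \Delta, \varepsilon, S)$ is a Hopf algebra.

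I do not expect a real obstacle; the only delicate point is the bookkeeping of bi-degrees through $\Delta$ and $S$ and the observation that the $\twcoc_\Theta$-phases produced are constant along the Sweedler sum and are killed exactly on the support of $\varepsilon$ --- this is where the hypotheses \eqref{eq:coprd-bigrd}, \eqref{eq:counit-bigrd} and \eqref{eq:antp-star-gd} enter essentially. (The same computations, using $\overline{\twcoc_\Theta(r,l)} = \twcoc_\Theta(l,r)$, would further show that a compatible $*$-structure also descends to $H_\Theta$, if wanted.)
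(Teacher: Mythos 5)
Your proposal is correct and follows essentially the same route as the paper: Lemma \ref{lem:coprd-prd} for the bialgebra compatibility, and the degree bookkeeping through \eqref{eq:coprd-bigrd}, \eqref{eq:counit-bigrd} and \eqref{eq:antp-star-gd} showing that the $\twcoc_\Theta$-phase in $S(\one{h})\cdot_\Theta\two{h}$ is constant along the Sweedler sum and trivial on the support of $\varepsilon$. The extra verifications you include (unit, multiplicativity of $\varepsilon$, anti-multiplicativity of $S$) are routine points the paper leaves implicit, and your phase $\twcoc_{-\theta}(-r_1,r_2)$ is in fact the correct form of the factor the paper records as $\twcoc_{-\theta}(r,l)$.
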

\begin{proof}
The compatibility  between the algebra $ \cdot_\Theta$ and the coalgebra 
$\Delta$ structures has been dealt with  in Lemma \ref{lem:coprd-prd}.
The coproduct $\Delta$ and counit $\varepsilon$ are not deformed at all, 
thus property
$( \varepsilon \otimes 1) \Delta = 1 = ( 1 \otimes \varepsilon ) \Delta$
remains. We are left to verify 
\begin{align}
\label{eq:S-ve-2chk} 
S ( \one{h}) \cdot_\Theta \two{h} = \varepsilon ( h ) = 
\one{h} \cdot_\Theta S ( \two{h}) , \quad \forall  h \in  H.
\end{align}
Suppose $h$, $\one{h}$ and $\two{h}$ are homogeneous of degree  $(r , l)$, 
$(r , s)$ and  $(s , l)$ respectively. By the assumptions in 
\eqref{eq:antp-star-gd}, $S ( \one{h})$ is of degree $(-s , -r)$, thus
\begin{align*}
S ( \one{h}) \cdot_\Theta \two{h} 
& =      
\twcoc_\theta (-s ,s) \twcoc_{ - \theta } ( r , l)
S ( \one{h}) \cdot \two{h} =      
\twcoc_{ - \theta } ( r , l)
S ( \one{h}) \two{h}  \\
& =     
\twcoc_{ - \theta } ( r , l)
\varepsilon ( h) = \varepsilon ( h ) .
\end{align*}
For the last step, we need  to invoke \eqref{eq:counit-bigrd}, so that 
$ \varepsilon (h) = 0$ whenever $l \neq r$, while for  $r = l$, we have 
$ \twcoc_{ - \theta } ( r , l) = 1$.
\end{proof}

Next, 
let $ \mathcal M^H$ be the category of $H$-comodule with a bi-grading of  $\IZ^n$ 
and such that the coaction $\delta^V : V \to V \otimes  H$ with $V \in  \mathcal M^H$, 
behaves in a similar way  to the coproduct in
\eqref{eq:coprd-bigrd} as regarding the grading:
\begin{align}
\label{eq:coact-bigrd} 
\delta^V \brac{   V_{ (r , l)}}  \subset \bigoplus_{ s \in  \IZ^n}    
V_{ (r ,s) } \otimes  H_{ ( s , l)}   .
\end{align}
The co-representations $ \mathcal M^{H_\Theta}$ of $H_\Theta$,
keep the same objects and morphisms as $ \mathcal M^H$.
Modification  only occurs on the coaction on the monoidal
structure.  Namely, in the coaction on $V \otimes  W$, where $V , W$ are in $ \mathcal M^H$, we must 
use of  the multiplication of $H_\Theta$: 
\begin{align}
\label{eq:coact-HTheta} 
\delta^{V \otimes_\Theta W}: V \otimes W \to   V \otimes W \otimes  H_\Theta,
\quad v \otimes  w \mapsto 
\zero{v} \otimes \zero{w} \otimes 
\one{v} \cdot_\Theta \one{w} .
\end{align}

When deforming a comodule algebra $A$ in $ \mathcal M^H$, which play 
the role of function algebra on the noncommutative principal bundle,
we have to impose similar conditions. That is, we have that
$A$ also admits a bi-grading of  $\IZ^n$ such that 
\begin{enumerate}
\item the product of $A$ preserves the bi-grading as in \eqref{eq:prd-bigrd};
\item[]
\item the coaction  $\delta^A : A \to A \otimes  H$ satisfies
\eqref{eq:coact-bigrd} on the bi-grading. 
\end{enumerate}
The first condition allows one to form the deformed algebra $A_\Theta$ and the 
second one makes sure we still have a comodule algebra after deformation.

\begin{prop}
\label{prop:comodalg-Theta} 
Consider a Hopf--Galois extension $(A , H)$ with both $H$ and $A$ endowed with a bi-grading of $\IZ^n$, and algebra of coinvariants $B= A^{co H}$ (with a heredity bi-grading from $A$). Then,    
their bi-grading leads to the deformed algebras 
$H_\Theta = ( H , \cdot_\Theta )$  and $A_\Theta = ( A , \cdot_\Theta)$
according to  \eqref{eq:Theta-hr-gl}.
Moreover, $A_\Theta$ is a $H_\Theta$-comodule  algebra with the same coaction 
viewed as a map $\delta^A_\Theta : A_\Theta \to  A_\Theta \otimes  H_\Theta$.
Also, the coinvariant subspace  
$B_\Theta = A_\Theta^{\mathrm{co} H_\Theta } = (A^{\mathrm{co}H} , \cdot_\Theta) = (B, \cdot_\Theta)$
maintains its starting vector space sitting inside $A_\Theta$ as a subalgebra. 
\end{prop}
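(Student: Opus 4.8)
The plan is to verify each assertion in turn, most of which follow directly from the general deformation machinery already set up in the section, combined with the bi-grading assumptions (1) and (2) imposed just above the statement. First I would form the deformed algebras: since by hypothesis the product of $A$ preserves the bi-grading as in \eqref{eq:prd-bigrd}, and $B = A^{\mathrm{co}H}$ inherits the bi-grading, the construction \eqref{eq:m_theta-defn}--\eqref{eq:Theta-hr-gl} applies verbatim to give associative algebras $A_\Theta = (A, \cdot_\Theta)$ and $H_\Theta = (H, \cdot_\Theta)$, the latter being a Hopf algebra by Proposition \ref{prop:H-Theta}. Associativity of $\cdot_\Theta$ on $A$ is again a consequence of the $2$-cocycle identity \eqref{eq:2-coc-e-theta} for $\twcoc_\Theta$, exactly as for $m_\theta$.

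Next I would check that $\delta^A : A_\Theta \to A_\Theta \otimes H_\Theta$ is an algebra map. The coaction is unchanged as a linear map, so I must compare $\delta^A(a \cdot_\Theta \tilde a)$ with $\delta^A(a) \cdot_{\Theta} \delta^A(\tilde a)$, where the product on the target is the deformed one on $A_\Theta \otimes H_\Theta$ using $\cdot_\Theta$ in both legs — this is precisely the monoidal structure \eqref{eq:coact-HTheta}. Working on homogeneous elements $a$ of degree $(r,l)$ and $\tilde a$ of degree $(p,q)$, with $\delta^A(a) = \zero{a} \otimes \one{a}$ so that $\deg \zero{a} = (r,s)$ and $\deg \one{a} = (s,l)$ by \eqref{eq:coact-bigrd} (and similarly for $\tilde a$ with intermediate index $k$), the computation is the exact analogue of the one in Lemma \ref{lem:coprd-prd}: the phase from $\zero{a} \cdot_\Theta \zero{\tilde a}$ contributes $\twcoc_\theta(r,p)\twcoc_{-\theta}(s,k)$, the phase from $\one{a} \cdot_\Theta \one{\tilde a}$ contributes $\twcoc_\theta(s,k)\twcoc_{-\theta}(l,q)$, and the two $\twcoc_{\pm\theta}(s,k)$ factors cancel, leaving exactly $\twcoc_\Theta((r,l),(p,q))$ times the undeformed $\delta^A(a\tilde a)$, which is $\delta^A(a \cdot_\Theta \tilde a)$. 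That $A_\Theta$ is unital and that coassociativity/counitality of the coaction persist is immediate since none of $\Delta$, $\varepsilon$, $\delta^A$ is altered.

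It remains to identify the coinvariants. Here the key observation is that the condition $\delta^A(b) = b \otimes 1_H$ is a condition on the \emph{linear} map $\delta^A$, which has not changed, so the coinvariant subspace of $A_\Theta$ under $\delta^A_\Theta$ is literally the same subset of the underlying vector space as $B = A^{\mathrm{co}H}$; one should only note that $1_H$ is homogeneous of degree $(0,0)$ (it lies in the image of the unit, forced by \eqref{eq:prd-bigrd} and unitality), so the comparison of gradings in \eqref{eq:coact-bigrd} is consistent with $b$ lying in the diagonal-degree part. Since $B$ is closed under the undeformed product and the bi-grading restricts to it, condition \eqref{eq:prd-bigrd} holds for $B$, so $\cdot_\Theta$ restricts to $B$ and $B_\Theta = (B, \cdot_\Theta)$ is a subalgebra of $A_\Theta$. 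I expect no genuine obstacle: the only point requiring care is bookkeeping the phase factors in the comodule-algebra check and making sure the monoidal structure on the target of $\delta^A$ is taken to be \eqref{eq:coact-HTheta} rather than the naive tensor product — getting that wrong is the one way the computation would fail to close. The Hopf--Galois property of $(A_\Theta, H_\Theta)$ is deliberately \emph{not} part of this statement (it is handled separately afterwards), so I would not attempt it here.
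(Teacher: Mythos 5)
Your proposal is correct and follows essentially the same route as the paper's proof: the comodule-algebra property is obtained by repeating the phase-cancellation computation of Lemma~\ref{lem:coprd-prd} (which the paper merely cites and you write out explicitly, correctly using the monoidal structure \eqref{eq:coact-HTheta} on the target), and the coinvariants are identified by noting that the coaction is unchanged as a linear map while $\cdot_\Theta$ restricts to $B$ since it differs from $m$ only by phases on homogeneous elements.
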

\begin{proof}
Observe that, from \eqref{eq:coprd-bigrd} and \eqref{eq:coact-bigrd}, the coaction  
$\delta^A$ and coproduct  $\Delta$ of  $H$ change the bi-grading in a similar manner, 
hence  compatibility between the coaction and multiplication of $A$ can be
proved  along the lines of Lemma \ref{lem:coprd-prd}.

Since the coaction is taken directly from $(A , H)$, the coinvariant
subspace remains the same as a vectors space. Moreover,
the $\theta$-multiplication differs from the original one by a phase factor 
on homogeneous elements,  thus it maps  $B \otimes B$ into $B$. 
In other words,  $ \cdot_\Theta$ can be restricted onto the coinvariant
subspace $B$ to form $B_\Theta$.
\end{proof}

\begin{exa}
\label{eg:Q-SO}
Let us specialize the discussion in Appendix \ref{sec:Qgrp} to the case 
$G = \SO(2n)$ and $ \widetilde G = \SO ( 2n +1 ) $  and 
discuss the bi-grading behind the quantum spheres in \eqref{sec:ev-q-shp}
in great detail.
The torus action $\alpha$ of $\IT^n \times  \IT^n $ is now given by matrix multiplications
from two sides so that all the generators in \eqref{thetaCR} and \eqref{crN}
are eigenfunctions:
\begin{align}
\label{eq:eigfuns-eigvals} 
\begin{split}
\alpha_{t, \tilde t} ( a_{ j k } )  = t_j \tilde t_k a_{  jk} , 
\, \, \, \, 
\alpha_{t, \tilde t} ( b_{ j k } )  = t_j \tilde t_k^* b_{  jk} ,
\, \, \, \, 
\alpha_{t, \tilde t} ( u_{ j  } )  =  t_j u_{  j} ,
\, \, \, \, 
\alpha_{t, \tilde t} ( v_{ k  } )  =  \tilde t_k v_{  k} ,
\\
\alpha_{t, \tilde t} ( a^*_{ j k } )  = t_j^* \tilde t^*_k a_{  jk} , 
\, \, \, \, 
\alpha_{t, \tilde t} ( b^*_{ j k } )  = t_j^* \tilde t_k^* b_{  jk} ,
\, \, \, \, 
\alpha_{t, \tilde t} ( u^*_{ j  } )  =  t_j^* u_{  j} ,
\, \, \, \, 
\alpha_{t, \tilde t} ( v^*_{ k  } )  =  \tilde t_k^* v_{  k} .
\end{split}
\end{align}
Therefore, we can reconstruct the algebras 
$ H_\Theta = \mathcal O (\SO_\theta (2n))$ and
$ \widetilde H_\Theta = \mathcal O (\SO_\theta (2n + 1))$  by
assigning the following degrees to generators:  
\begin{align}
\begin{split}
& \deg a_{ i j } = (e_i , e_j) = - \deg a^*_{  i j} , 
\, \, \, \, 
\deg b_{ i j } = (e_i , - e_j) = - \deg b^*_{  i j}, 
\\
& \deg u_i = - \deg u_i^*  = ( e_i, 0), 
\, \, \, \, 
\deg v_i = - \deg v_i^*  = (0 , e_i) ,
\end{split}
\label{eq:gnt-degs}
\end{align}
where $\set{e_j, \, j =1, \cdots, n}$ is the standard basis of  $\IZ^n$, 
and extents to the whole algebra according to \eqref{eq:prd-bigrd}.
For homogeneous elements, the new multiplication differs from the commutative one by 
the phase factors as in \eqref{eq:Theta-hr-gl} instance, 
\begin{align*}
a_{ i j } \cdot_\theta   a_{ k l }  = 
\twcoc_{\theta} ( e_i , e_k )
\twcoc_{ - \theta }  ( e_j , e_l ) 
a_{ i j } a_{ k l }    =
\sqrt{ \lambda_{ i k} \lambda_{l j}  }
a_{ i j } a_{ k l }   ,
\end{align*}
and similarly, 
$  a_{ i j } \cdot_\theta   b_{ k l }  =
\sqrt{ \lambda_{ i k} \lambda_{ j l}  } b_{ i j } b_{ k l }$, while
for generators $\mathbf u$ and  $ \mathbf v$ in \eqref{crN},
\begin{align*}
u_i \cdot_\theta u_ j =
\twcoc_\theta (e_i , e_j)   u_i u_j  =
\sqrt{ \lambda_{ i j} }    u_i u_j   ,
\\
v_i \cdot_\theta v_ j =  
\twcoc_{-\theta} (e_i , e_j)v_i v_j = 
\sqrt{ \lambda_{ j i} }      v_i v_j .
\end{align*}
One recovers the commutation relations in \eqref{thetaCR} and \eqref{crN}
by taking \eqref{eq:com-rltn-A} into account.

Let us sample the assumptions 
\eqref{eq:coprd-bigrd} -  \ref{eq:antp-star-gd} on 
some of generators. For the coproduct:
\begin{align*}
\Delta ( a_{ j l} )  =
\sum_{ s } a_{ j s} \otimes  a_{ s l} + b_{ j s} \otimes  b^*_{ s l} ,    
\end{align*}
the right hand side indeed fulfils
$ a_{ j s} \otimes  a_{ s l} \in  H_{ (j ,s)} \otimes H_{ ( s , l)}$
and 
$   b_{ j s} \otimes  b^*_{ s l} \in  H_{ ( j , -s)} \otimes  H_{ (-s , l)}$.
For the counit  $\varepsilon$ defined by $ \varepsilon (N) = \mathbb I$, 
only the diagonal entries of $N$ will survive after applying  $\varepsilon$ 
and they indeed belong to
$\bigoplus_{ s \in \IZ^n } H_{ (s,s)}$ as required in \eqref{eq:counit-bigrd}. 
For the $*$-operator $*N = Q N Q$, we see, for instance, 
that $ \deg ( b_{  i j} )^* = \deg b^*_{  i j}  = - \deg b_{  i j} $.
For the antipode $ S (N) = N^{\dagger}$, we would like to check on, say $u_j$: 
$\deg u_j = ( e_j , 0)$ compared with
$ \deg S ( u_j ) = \deg v^*_j = (0, - e_j)$.
Lastly, the analysis of \eqref{eq:coact-bigrd} for the coaction $\delta^{ \widetilde H}$ is just the same as that for the coproduct $\Delta$.
\qed\end{exa}

\begin{exa}
The matrix representation of $\SO(2n)$ in \eqref{eq:mtxM} and \eqref{idealM} 
require an extra structure on
$\IR^{2n}$, that is a choice of polarization. Concretely, one identifies
$\mathbb{R}^{2n} \cong \mathbb{C}^n$ and choose a basis formed by complex
coordinates $\set{z_j, \bar z_j, \, j = 1 \cdots n}$, with respect to which
the coefficient matrix of the Euclidean  inner product is of the form $Q$ in
\eqref{idealM}. We recall a remark made in \cite[\S 8]{Connes:2002wh} 
which further motivates the bigrading setting of our scenario II in connection with 
the grading in scenario I. 

At the level of the endomorphisms $M(2n,\mathbb{R})$, 
the identification is achieved by realising 
$ M(2n, \mathbb{R}) \subset \mathrm{End}  ( \mathbb{C}^n) 
\cong (\mathbb{C}^n)^* \otimes \mathbb{C}^n$. 
One first applies the $\theta$-deformation to
$(\mathbb{C}^n)^* \otimes \mathbb{C}^n$ following the setting in scenario I,
which gives rise to two deformed algebra 
$A( \mathbb{R}^{2n}_{ \theta}) = A(\mathbb{C}^n_{ \theta}) $
and $A(\mathbb{R}^{2n}_{ -\theta}) = A(\mathbb{C}^n_{ -\theta})$, with 
generators  
$\set{z^j , \bar z^j := (z^j)^* , \, j=1, \cdots, n}$ for $ A(\mathbb{R}^{2n}_{ \theta})$ and
$\set{z_j , \bar z_j := (z_j)^*, \, j=1, \cdots, n}$ for $ A(\mathbb{R}^{2n}_{ -\theta})$.
The $n$-torus action  $\alpha$ is the standard one:
\begin{align}
    \label{eq:z_j-actn} 
\alpha_t ( z_j ) = t_j  z_j , \, \,
\alpha_t ( \bar z_j ) = \bar t_j  \bar z_j , \qquad
\alpha_t ( z^j ) = t_j  z^j , \, \,
\alpha_t ( \bar z^j ) = \bar t_j  \bar z^j , \, \,
\end{align}
 which leads to the $\IZ^n$-grading:
\begin{align}
    \label{eq:z_j-deg} 
e_j = \deg z^j = \deg z_j = -  \deg \bar z^j = - \deg \bar z_j,
\end{align}
where $\set{e_j, \, j=1, \cdots, n}$ is the standard basis of $\IZ^n$. 
The algebra structure of $A(\mathbb{R}^{2n}_{ \theta })$ is determined by
the commutation relations: 
\begin{align*}
    z_j z_k = \lambda_{ j k} z_k z_j, \, \quad 
    \bar z_j z_k = \lambda_{  k j} z_k \bar z_j .
\end{align*}
Those for $A(\mathbb{R}^{2n}_{ - \theta })$ are obtained by replacing 
$\lambda_{ j k}$ with $ \bar \lambda_{ j k}$.
Now, the deformed $*$-algebra 
$\mathcal O(M_\theta (2n , \mathbb{R}))$ 
(One can forget the coalgebra structure for the time being.)
has already been defined in \eqref{eq:mtxM} and \eqref{thetaCR} 
in terms of generators and relations.
A key point is that there is
a $*$-algebra homomorphism $\varphi: \mathcal O(M_\theta (2n , \mathbb{R})) \to
A(\mathbb{R}^{2n}_{ \theta})  \otimes A(\mathbb{R}^{2n}_{  - \theta})$
induced by
\begin{align*}
    \varphi (a_{ i j}) = z^i \otimes z_j , \, \quad
    \varphi (b_{ i j}) = z^i \otimes \bar z_j .
\end{align*}
Furthermore, the map $\varphi$ is injective and transfers the torus action 
$\alpha \otimes \alpha $ (cf. \eqref{eq:z_j-actn}),
or equivalently, the bi-grading of 
$ A(\mathbb{R}^{2n}_{ \theta})  \otimes A(\mathbb{R}^{2n}_{ -\theta})$
(cf. \eqref{eq:z_j-deg}), to those described in the Example \ref{eg:Q-SO}: see 
\eqref{eq:eigfuns-eigvals} and \eqref{eq:gnt-degs}.
\qed
\end{exa}

Let us now take a closer look at the algebra of coinvariants and at the balanced product. 

\begin{lem} \label{lem:balancedts-deg} 
With the assumptions on $H$ and  $A$ as before, 
the coinvariant subalgebra $B = A^{ \mathrm{co} H}$ 
is contained in
\begin{align}
\label{eq:B-deg} 
B \subset \bigoplus_{ r \in \IZ^n }  A_{ (r , 0) } .
\end{align}
\end{lem}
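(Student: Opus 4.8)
The statement claims that coinvariant elements of $A$ are concentrated in bi-degrees of the form $(r,0)$. The natural approach is to work with homogeneous elements and trace how the bi-grading interacts with the coaction $\delta^A$ and the definition of coinvariance. First I would reduce to homogeneous elements: since $B = A^{\mathrm{co} H}$ is a graded subspace (the coaction preserves homogeneity up to the group homomorphism of \eqref{eq:mor-N}, and coinvariance is a closed condition under the grading decomposition), it suffices to show that a homogeneous $b \in A_{(r,l)}$ with $\delta^A(b) = b \otimes 1_H$ must have $l = 0$.

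\textbf{Key steps.} Take $b \in A_{(r,l)}$ with $\delta^A(b) = b \otimes 1_H$. By the grading assumption \eqref{eq:coact-bigrd} on the coaction, $\delta^A(b) \in \bigoplus_{s \in \IZ^n} A_{(r,s)} \otimes H_{(s,l)}$. On the other hand, $b \otimes 1_H$ lives in $A_{(r,l)} \otimes H_{(0,0)}$, since $1_H$ is the unit and the unit is homogeneous of degree $(0,0)$ (it is grouplike for $\Delta$, and by the counit condition \eqref{eq:counit-bigrd} together with $\varepsilon(1_H) = 1$, the unit must sit in a diagonal component $H_{(s,s)}$; moreover $1_H \cdot 1_H = 1_H$ forces $s = 0$). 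Comparing the two expressions under the direct sum decomposition of $A \otimes H$ by bi-degrees of the $H$-factor: the right-hand side has its $H$-component in degree $(0,0)$, so matching with the left-hand side forces the only surviving term to be the one with $s = 0$, giving $\delta^A(b) \in A_{(r,0)} \otimes H_{(0,l)}$. Equality $b \otimes 1_H = (\text{term with } H \text{-degree } (0,l))$ then forces $(0,l) = (0,0)$ as the bi-degree of the $H$-slot, hence $l = 0$. Therefore $b \in A_{(r,0)}$.

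\textbf{Main obstacle.} The delicate point is the bookkeeping of which bi-degree the unit $1_H$ carries and, more importantly, making the degree-matching argument rigorous: one must be careful that the decomposition $A \otimes H = \bigoplus A_{(r,s)} \otimes H_{(p,q)}$ is a genuine direct sum so that an equality of elements can be checked component by component, and that $\delta^A(b)$, expanded via \eqref{eq:coact-bigrd}, genuinely has all its $H$-components in degrees $(s,l)$ with $l$ fixed equal to the second component of $\deg b$. Granting \eqref{eq:coact-bigrd} (stated earlier) this is essentially a comparison of homogeneous components; the only real content is identifying $\deg 1_H = (0,0)$, which follows from $\varepsilon$ being nonzero only on diagonal components plus multiplicativity of the degree. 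Once these pieces are in place the inclusion \eqref{eq:B-deg} is immediate.
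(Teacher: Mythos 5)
Your proof is correct and follows essentially the same route as the paper: take a homogeneous $b\in A_{(r,l)}$, use the grading condition \eqref{eq:coact-bigrd} to place $\delta^A(b)$ in $\bigoplus_s A_{(r,s)}\otimes H_{(s,l)}$, and compare bi-degrees in the coinvariance equation $\delta^A(b)=b\otimes 1_H$ with $\deg 1_H=(0,0)$ to force $l=0$. The paper's proof is a terser version of exactly this degree-matching argument (it simply reads off $(s,l)=(0,0)$ from the $H$-slot), so there is nothing to add.
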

\begin{proof}
The (algebra of functions on) the torus $\IT^n$ acting on the right is contained in $H$ and gets washed away when passing to the coinvariant elements for the coaction of $H$. 
Explicitly, consider a homogeneous element $b \in  B$ with $\deg b = (r, l)$. 
From \eqref{eq:coact-bigrd} we have 
$\deg b_{ (0)} = (r , s) $ and $\deg b_{ (1)} = (s , l)$ where $s \in  \IZ^n$
depends on the components. 
The condition of being coinvariant
$ b_{ (0)} \otimes  b_{ (1)} = b \otimes  1 $ forces that 
$(r, s ) = ( r , l)$ and $ (s, l) = (0 , 0)$, hence $\deg b$ is  always 
of the form $(r , 0)$ for some  $r \in  \IZ^n$.
\end{proof}
The  $2n$-sphere $B = \mathcal O (S^{2n})$ and its deformation
in \eqref{eg:Q-SO} indeed satisfy \eqref{eq:B-deg}: 
the generators  of $B$ (or $B_\Theta$) are
$\set{u_j , u^*_j , \, j = 1, \cdots, n}$ which are of degree $(\pm e_j ,0)$.

When forming the balanced tensor product 
$a \otimes_B \tilde a$, where $a , \tilde a \in A$,
the degrees  of $a $ and $ \tilde a$ (assumed to be homogeneous)
depend on the choice of the representative. However, from the previous lemma, 
the action of $B$ only varies the left degree.
As we shall see in next lemma, 
by slightly abusing the notation, for the translation map we can write
\begin{align}
\label{eq:tau-bigrd-Theta} 
\tau \brac{ H_{ (r , l)}  }  \subset \bigoplus_{ p \in \IZ^n}
A_{ ( - p , - r)} \ot_B A_{ (p , l)} .
\end{align}
Let us check this on $\tau (\mathbf h)$ and $\tau (\mathbf k)$ in \eqref{eq:tranmap-h}. 
For the $(r,l)$-entry of  $\mathbf h$, we have
\begin{align}\label{tauhexp}
\tau ( h_{ r l} ) 
& =
\sum_{ s } (\mathbf a)_{ r s}^{\dagger} \ot_B \mathbf a_{ s l} + 
(\mathbf b^*)_{ r s}^{\dagger} \ot_B (\mathbf b^*)_{ s l} +
(\mathbf v)^{\dagger}_r \ot_B \mathbf v_l 
\nn \\
& =
\sum_{ s }
a^*_{ s r} \ot_B a_{ s l }  +
b_{ s r} \ot_B  b^{*}_{ s l}  +
v^*_{ r }  \ot_B  v_l .
\end{align}
We see that
$ a^*_{ s r} \otimes a_{ s l }
\in \widetilde H_{ (-e_s ,-e_r)} \otimes \widetilde H_{ ( e_s ,e_l )} $,
$ b_{ s r}  \otimes  b^{*}_{ s  l}  
\in \widetilde H_{ (e_s ,-e_r)} \otimes \widetilde H_{ ( -e_s ,e_l )} $
as well as 
$ v^*_{ r }  \otimes  v_l
\in \widetilde H_{ (0 ,-e_r)} \otimes \widetilde H_{ ( 0 ,e_l )} $
all satisfy \eqref{eq:tau-bigrd}.
Similarly, for the $(r,l)$-entry of  $\mathbf k$, we have
\begin{align}\label{taukexp}
\tau ( k_{ r l} ) 
& =
\sum_{ s } (\mathbf a)_{ r s}^{\dagger} \ot_B \mathbf b_{ s l} + 
(\mathbf b^*)_{ r s}^{\dagger} \ot_B (\mathbf a^*)_{ s l} +
(\mathbf v)^{\dagger}_r \ot_B \mathbf v^*_l  
\nn \\
& =
\sum_{ s }
a^*_{ s r} \ot_B b_{ s l }  +
b_{ s r} \ot_B  a^{*}_{ s l}  +
v^*_{ r }  \ot_B  v^*_l .
\end{align}
We see that
$ a^*_{ s r} \otimes b_{ s l }
\in \widetilde H_{ (-e_s ,-e_r)} \otimes \widetilde H_{ ( e_s , -e_l )} $,
$ b_{ s r}  \otimes  a^{*}_{ s  l}  
\in \widetilde H_{ (e_s ,-e_r)} \otimes \widetilde H_{ ( -e_s , -e_l )} $
as well as 
$ v^*_{ r }  \otimes  v^*_l
\in \widetilde H_{ (0 ,-e_r)} \otimes \widetilde H_{ ( 0 , -e_l )} $
and again they all satisfy \eqref{eq:tau-bigrd}.
We also have
\begin{align*}
    m_\Theta ( \tau (\mathbf h )) & =
    \mathbf a^{\dagger} \cdot_\Theta \mathbf a +
    \mathbf b^{t} \cdot_\Theta \mathbf b^* +
    \mathbf v^{\dagger} \cdot_\Theta \mathbf v = \varepsilon( \mathbf h ) \II = \II, \\
    m_\Theta ( \tau (\mathbf k )) & =
    \mathbf a^{\dagger} \cdot_\Theta \mathbf b +
    \mathbf b^{t} \cdot_\Theta \mathbf a^* +
    \mathbf v^{\dagger} \cdot_\Theta \mathbf v^* = \varepsilon( \mathbf k ) \II = 0 
\end{align*}
and both agree with \eqref{p5} and Corollary \ref{cor:tau-bigrd} below. 

\begin{lem} \label{lem:balancedts-deg-2} 
Let $(A , H)$ be a Hopf--Galois extension fulfilling 
all assumptions of earlier.
For any homogeneous elements $h \in  H_{ (r , l)} $, there are suitable
representatives for the translation map $\tau ( h )  = h^{\abrac{ 1 }} \ot_B h^{\abrac{ 2 }}$ 
such that
\beq
\label{eq:tau-bigrd} 
\deg h^{\abrac{ 1 }}  = ( -p , - r )   , \, \, \, \,   
\deg h^{\abrac{ 2 }}      = ( p , l) ,
\eeq
where, by taking \eqref{eq:B-deg} into account, the left degree $p$ depends on 
the components $ h^{\abrac{ 1 }},  h^{\abrac{ 2 }}$ and the choice 
of the representatives (cf. also Remark \ref{rem-dtm}).
\end{lem}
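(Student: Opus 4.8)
The plan is to reduce the statement to the analogous degree bookkeeping for the \emph{undeformed} Hopf--Galois extension $(A,H)$ and then transport it through the deformation, since by Proposition \ref{prop:comodalg-Theta} the translation map of $\chi_\Theta$ is essentially the one of $\chi$ and, more importantly, the underlying graded vector spaces of $A_\Theta$, $B_\Theta$, $H_\Theta$ coincide with those of $A$, $B$, $H$. Thus it suffices to establish \eqref{eq:tau-bigrd} for $\tau$ as a map into $A\ot_B A$ with its $\IZ^{2n}$-grading; the deformed multiplication only rescales homogeneous tensors by phases and does not affect degrees.

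The first step is to show that $\tau$ is a morphism of graded spaces of a controlled bidegree, i.e. that $\tau$ respects the bi-grading in the sense of \eqref{eq:tau-bigrd-Theta}: $\tau(H_{(r,l)}) \subset \bigoplus_{p\in\IZ^n} A_{(-p,-r)} \ot_B A_{(p,l)}$. The cleanest route is via the defining identity \eqref{p7}, namely $\tuno{h}\zero{\tdue{h}}\ot \one{\tdue{h}} = 1_A \ot h$, combined with the graded behaviour of the canonical map $\chi$. Concretely, $\chi$ is built from the multiplication of $A$ (which preserves the bi-grading by \eqref{eq:prd-bigrd}) and from $\delta^A$ (which behaves as in \eqref{eq:coact-bigrd}); hence $\chi$ sends $A_{(a,b)}\ot_B A_{(c,d)}$ into $\bigoplus_{s} A_{(a+c,s)}\ot H_{(s,d)}$. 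Since $\chi$ is invertible and $1_A$ has degree $(0,0)$, looking at $\chi(\tau(h_{(r,l)})) = 1_A \ot h_{(r,l)}$ pins down the right $H$-degrees of the second leg and forces the total left degree of $\tuno{h}\tdue{h}$ to be $0$, which gives the $(-p,p)$ split of the left degrees; the right degrees $(-r)$ and $l$ on the two legs then follow by matching against the degree $(r,l)$ of $h$ in the identity $\tuno{h}\zero{\tdue{h}}\ot\one{\tdue{h}} = 1_A\ot h$ together with \eqref{eq:coact-bigrd}. This is exactly the computation already carried out by hand for $\tau(\b h)$ and $\tau(\b k)$ in \eqref{tauhexp}--\eqref{taukexp}, and the general case is no different.

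The second step is to pass from ``$\tau(h)$ lies in a sum of homogeneous pieces of the stated bidegrees'' to ``one can choose representatives $h^{\abrac1}\ot_B h^{\abrac2}$ that are themselves homogeneous of the stated bidegrees.'' Here I would fix a homogeneous decomposition of $\tau(h)$ in $A\ot A$ compatible with the total $\IZ^{2n}$-grading, project onto the piece with left total degree $0$ (all other pieces vanish by Step 1, using that the ambient grading on $A\ot A$ descends to $A\ot_B A$ because $B$ is graded), and then invoke Lemma \ref{lem:balancedts-deg}: since $B\subset\bigoplus_r A_{(r,0)}$, moving an element of $B$ across the balanced tensor product only shifts the \emph{left} degrees of the two legs by opposite amounts and leaves the right degrees untouched. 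Therefore within a fixed homogeneous summand one is free to normalise the left degrees to the symmetric form $(-p,+p)$; this is precisely the statement that the left degree $p$ depends on the component and on the choice of representative, while the right degrees $(-r)$ and $l$ are rigid. The remark preceding the lemma already records the compatibility of this choice across representatives, so no further independence argument is needed.

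The main obstacle is the second step: the balanced tensor product $A\ot_B A$ is a quotient, so a priori ``homogeneous in $A\ot A$'' need not survive, and conversely a class in $A\ot_B A$ may fail to have any homogeneous representative at all. The key technical point that makes it work is Lemma \ref{lem:balancedts-deg} — that $B$ sits in left-degree-zero components only — which guarantees that the relation defining $\ot_B$ (namely $ar\ot a' \sim a\ot ra'$ for $r\in B$) is homogeneous for the \emph{right} $\IZ^n$-grading and only mixes left degrees in a degree-preserving-in-total way; hence the right bidegrees descend unambiguously and the left ones can be redistributed at will subject to their sum being fixed. Once this is observed, the rest is the routine bookkeeping already illustrated on the generators $\b h,\b k$.
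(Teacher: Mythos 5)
Your proof is correct and takes essentially the same route as the paper, whose entire argument is the degree matching in your first step: one reads off $\deg (h^{\abrac{2}})_{(0)}=(p,s)$ and $\deg (h^{\abrac{2}})_{(1)}=(s,q)$ from \eqref{eq:coact-bigrd} and compares both sides of \eqref{p7} to get $s=r$, $q=l$ and $\deg h^{\abrac{1}}=-(p,s)=(-p,-r)$; your second step, justifying that homogeneous representatives survive in the quotient $A\ot_B A$ because $B\subset\bigoplus_r A_{(r,0)}$ makes the balancing relation redistribute only left degrees, is left implicit in the paper and is a sensible addition. One small slip: the bidegree of $\chi\bigl(A_{(a,b)}\ot_B A_{(c,d)}\bigr)$ is $\bigoplus_s A_{(a+c,\,b+s)}\ot H_{(s,d)}$ rather than $\bigoplus_s A_{(a+c,s)}\ot H_{(s,d)}$, but this does not affect your conclusion since the final matching against $1_A\ot h$ is carried out correctly.
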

\begin{proof}
The  constraint on degrees in \eqref{eq:tau-bigrd}  follows from \eqref{p7}:
$ h^{\abrac{ 1 }} (h^{\abrac{ 2 }})_{ (0)}  
\otimes (h^{\abrac{ 2 }})_{ (1)} = 1 \otimes h $.
Suppose $\deg h = (r , l)$, $ \deg h^{\abrac{ 2 }} = (p ,q)$ 
and $\deg h^{\abrac{ 1 }} = (p' ,q')$, so that 
$ \deg (h^{\abrac{ 2 }})_{ (0)}  = ( p ,s)$ and 
$\deg (h^{\abrac{ 2 }})_{ (1)}  = (s , q)$ for some $s \in  \IZ^n$. 
By comparing the two sides of \eqref{p7}, we have
$ ( s , q) = (r , l)$ and $ (p' , q') = - ( p , s)$. Thus    
\eqref{eq:tau-bigrd} follows:
$q  = l$,   $p' = - p$ and $q' = -s = -r$.
\end{proof}

\begin{cor}
\label{cor:tau-bigrd} 
Let  $ h \in  H_{ (r , l)} $ be a homogeneous element with $\tau ( h )  = h^{\abrac{ 1 }} \ot_B h^{\abrac{ 2 }}$. Then, 
\begin{itemize}
\item[i)] for  $r = l $ one has $\deg h^{\abrac{1 }} + \deg h^{ \abrac{ 2 }}  = 0$; 
\item[]
\item[ii)] for $ r \neq l$,  one has $\tau ( h) = 0$. 
\end{itemize}
\end{cor}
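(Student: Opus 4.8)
The plan is to deduce the corollary directly from Lemma~\ref{lem:balancedts-deg-2}, using the skew-symmetry of $\theta$ and the counit grading condition \eqref{eq:counit-bigrd}. For item i), suppose $r=l$. By Lemma~\ref{lem:balancedts-deg-2} we may pick representatives with $\deg h^{\abrac{1}} = (-p,-r)$ and $\deg h^{\abrac{2}} = (p,l) = (p,r)$. Then the sum of the two degrees is $(-p,-r)+(p,r) = (0,0)$, which is exactly the assertion $\deg h^{\abrac{1}} + \deg h^{\abrac{2}} = 0$. This is immediate; the only subtlety worth a sentence is that the statement is representative-independent, which follows as in Remark~\ref{rem-dtm}: changing the representative by $b\in B$ shifts one left degree up and the other down by the degree of $b$, leaving the total unchanged.

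For item ii), the approach is to use property \eqref{p5}, $\tuno{h}\tdue{h} = \varepsilon(h)1_A$, together with the grading of the counit. Take $h \in H_{(r,l)}$ homogeneous with $r\neq l$. By Lemma~\ref{lem:balancedts-deg-2}, in each term of $\tau(h) = h^{\abrac{1}}\ot_B h^{\abrac{2}}$ we have $\deg h^{\abrac{1}} = (-p,-r)$ and $\deg h^{\abrac{2}} = (p,l)$, so the product $h^{\abrac{1}} h^{\abrac{2}}$ is homogeneous of degree $(0, l-r) \neq (0,0)$. On the other hand, by \eqref{p5} this product equals $\varepsilon(h)1_A$, and since $r\neq l$ the counit condition \eqref{eq:counit-bigrd} forces $\varepsilon(h)=0$. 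Hence $h^{\abrac{1}} h^{\abrac{2}} = 0$ for every term.

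To upgrade ``$h^{\abrac{1}} h^{\abrac{2}}=0$'' to ``$\tau(h)=0$'' I would invoke faithful flatness, which is built into our standing hypotheses on Hopf--Galois extensions (Remark~\ref{fun-rem}). Concretely, apply the canonical map $\chi$, or rather observe that $\tau$ itself is the restriction of $\chi^{-1}$ to $1_A\ot H$; since $\chi$ is an isomorphism, $\tau(h)=0$ if and only if its image $\chi(\tau(h)) = 1_A\ot h$ vanishes, but that is not zero. This shows the naive argument needs care: we cannot conclude $\tau(h)=0$ directly. The correct route is to apply instead the grading decomposition to \eqref{p5} degree by degree: write $\tau(h) = \sum_p \tau(h)_p$ with $\tau(h)_p \in A_{(-p,-r)}\ot_B A_{(p,l)}$; each $m(\tau(h)_p)$ lies in $A_{(0,l-r)}$, and their sum is $\varepsilon(h)1_A = 0$ when $r\neq l$. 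But this still only kills the \emph{product}, not $\tau(h)$ itself. So the genuine content of ii) must be read as: since $\chi$ maps the $\IZ^n$-graded piece $(A\ot_B A)$ of total right-degree into $A\ot H$ respecting the grading induced on $H$, and $\tau(H_{(r,l)})$ would land in the degree-$(r,l)$ part which under $\chi$ must match $1_A\ot h$ of degree $(r,l)$ in the second leg --- the obstruction is that there is no room: reconciling $\deg h^{\abrac{1}} + \deg h^{\abrac{2}}$ on the total space with $\deg h = (r,l)$ via \eqref{p7} already gave $(0,l-r)$ for the $A$-part and $(r,l)$ for $H$, and for $\chi(\tau(h)) = 1_A\ot h$ to be homogeneous of degree $(0, \cdot)$ in the $A$-factor and $(r,l)$ in $H$, consistency of the total $H_\Theta$-grading (which is what the deformed structure respects) forces $r=l$. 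I expect this reconciliation --- making precise in which graded sense $\chi$ and $\tau$ are morphisms so that $r\neq l$ is genuinely impossible --- to be the main obstacle, and the cleanest fix is to argue that for $r\neq l$ the space $\bigoplus_p A_{(-p,-r)}\ot_B A_{(p,l)}$ maps under $\chi$ into $\bigoplus_p A \ot H_{(\ast,\ast)}$ with second-leg degrees incompatible with $(r,l)$ unless it is the zero subspace, whence $\tau(h)=0$.
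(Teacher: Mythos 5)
Part i) of your argument is correct and is exactly the paper's: it follows at once from the degree constraint \eqref{eq:tau-bigrd} of Lemma \ref{lem:balancedts-deg-2}, and your remark about independence of the choice of representatives is a reasonable (if optional) addition.

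For part ii) you have isolated the genuine difficulty, and your refusal to jump from $h^{\abrac{1}}h^{\abrac{2}}=0$ to $\tau(h)=0$ is well founded. What you do prove is correct: $h^{\abrac{1}}h^{\abrac{2}}\in A_{(0,l-r)}$ equals $\varepsilon(h)1_A$ by \eqref{p5}, and this vanishes for $r\neq l$, both because of the degree mismatch with $A_{(0,0)}$ and because \eqref{eq:counit-bigrd} forces $\varepsilon(h)=0$. You should know, however, that the paper's own proof consists precisely of the step you declined to take: it asserts that $h^{\abrac{1}}h^{\abrac{2}}$ ``is non-zero unless $h^{\abrac{1}}\ot_B h^{\abrac{2}}=0$'', i.e.\ that $m_A(\tau(h))=0$ implies $\tau(h)=0$, with no justification. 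Your observation that $\chi(\tau(h))=1_A\ot h$ shows this implication cannot be repaired: since $\chi$ is bijective, $\tau(h)=0$ forces $h=0$, so statement ii) as written is equivalent to $H_{(r,l)}=0$ for all $r\neq l$ --- which is contradicted by the paper's own example, where $k_{jl}$ (any $j,l$) and $h_{jl}$ ($j\neq l$) are non-zero homogeneous elements with $r\neq l$ whose translation maps \eqref{tauhexp} and \eqref{taukexp} are visibly non-zero. Consequently the final paragraph of your proposal, which tries to force $r=l$ through some graded property of $\chi$, cannot succeed and should be dropped. The defensible content of ii), and the only form in which it is actually used afterwards (the ``accordance'' check on $m_\Theta(\tau(\mathbf h))$ and $m_\Theta(\tau(\mathbf k))$), is exactly what you established: for $r\neq l$ one has $\varepsilon(h)=0$ and hence $h^{\abrac{1}}h^{\abrac{2}}=0$, not $\tau(h)=0$. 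So the remaining gap lies in the statement (and the paper's proof of it), not in your argument; state and prove the corrected version instead of searching for a proof of the literal one.
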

\begin{proof}
With $r = l$, the first statement follows from \eqref{eq:tau-bigrd}. 
The latter also says that $ h^{\abrac{1 }} h^{\abrac{ 2 }} \in  A_{ (0 , l - r) }$,
which is non-zero unless $  h^{\abrac{1 }} \otimes  h^{\abrac{ 2 }} = 0$.
\end{proof}

This result is in accordance with \eqref{tauhexp} and \eqref{taukexp} by recalling that $
h_{jl}$ has bi-degree $((e_j, 0), (e_l, 0))$ while $k_{jl}$ has bi-degree $((e_j, 0), (-e_l, 0))$. 
It allows one to repeat the second part of Proposition \ref{prop:comodalg-theta}
and deform the starting Hopf--Galois extension into a new one.  

\begin{prop}\label{prop:H-G-Theta}
Consider the deformed pair $(H_\Theta , A_\Theta)$ obtained in {\rm Proposition 
\ref{prop:comodalg-Theta}} and define a deformed canonical Galois map $\chi_\Theta$ by
\begin{align}
\label{eq:chi-Theta}
\chi_\Theta: A_\Theta \otimes_{ B_\Theta }  A_\Theta
\to A_\Theta \otimes  H_\Theta , \quad 
a' \otimes_{ B_\Theta} a \mapsto  a' \cdot_\Theta a_{ (0)} \otimes a_{ (1)} .
\end{align}
This is invertible if and only if the starting canonical Galois map is with 
the same translation map, but viewed as a map $\tau : H_\Theta \to A_\Theta \otimes_{ B_\Theta }  A_\Theta$. 
\end{prop}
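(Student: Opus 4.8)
The plan is to mimic closely the proof of the second half of Proposition \ref{prop:comodalg-theta}, the only new ingredient being that one must keep track of the \emph{bi}-grading and invoke Corollary \ref{cor:tau-bigrd} instead of Lemma \ref{lem:mtheta-m}. First I would record that both $\chi$ and $\chi_\Theta$ are left $A$-linear (for the respective products), so it suffices to compare them on the translation map, i.e. to show $\chi_\Theta \circ \tau = \chi \circ \tau$ as maps $H \to A \otimes H$, where $\tau(h) = h^{\abrac{1}} \ot_B h^{\abrac{2}}$ is the starting translation map.

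The key computation is then, for a homogeneous $h \in H_{(r,l)}$,
\begin{align*}
\chi_\Theta(\tau(h)) = h^{\abrac{1}} \cdot_\Theta \zero{(h^{\abrac{2}})} \ot \one{(h^{\abrac{2}})}.
\end{align*}
By Corollary \ref{cor:tau-bigrd}, if $r \neq l$ then $\tau(h) = 0$ and there is nothing to prove; if $r = l$ then the two factors $h^{\abrac{1}}$ and $\zero{(h^{\abrac{2}})}$ have opposite total degree — indeed $\deg h^{\abrac{1}} = (-p,-r)$ while, by \eqref{eq:coact-bigrd} applied to $h^{\abrac{2}}$ of degree $(p,l) = (p,r)$, one has $\deg \zero{(h^{\abrac{2}})} = (p,s)$ for the appropriate $s$, and the product $h^{\abrac{1}}\zero{(h^{\abrac{2}})}$ lies in degree $(0,\,s-r)$; but this product is $1_A \ot h$ after applying the rest of \eqref{p7}, hence is coinvariant and thus, by Lemma \ref{lem:balancedts-deg}, has left-right degree of the form $(\ast,0)$, forcing $s=r$ and $\deg h^{\abrac{1}} + \deg \zero{(h^{\abrac{2}})} = 0$. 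Therefore $\twcoc_\Theta$ evaluates to $1$ on this pair and $h^{\abrac{1}} \cdot_\Theta \zero{(h^{\abrac{2}})} = h^{\abrac{1}} \zero{(h^{\abrac{2}})}$, so $\chi_\Theta(\tau(h)) = \chi(\tau(h)) = 1_A \ot h$ by \eqref{p7}.

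Having checked $\chi_\Theta \circ \tau = \id_{1_A \ot H}$ under the identification, I would then note that a left-$A$-linear map out of $A \otimes_B A$ which inverts the translation-map-restriction is a two-sided inverse: more precisely, since $\chi_\Theta$ is surjective (already $\tau$ composed with it hits $1_A \ot H$, and left $A_\Theta$-linearity spreads this to all of $A_\Theta \ot H_\Theta$) and $A_\Theta$ is faithfully flat over $B_\Theta$ — the underlying $B$-module structure being unchanged, cf. Remark \ref{fun-rem} and Proposition \ref{prop:comodalg-Theta} — the standard argument (as in \cite{schau2, BW}) gives that $\chi_\Theta$ is bijective with inverse the left-$A_\Theta$-linear extension of $\tau$. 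Conversely, running the same argument with $-\Theta$ in place of $\Theta$ shows that if $\chi_\Theta$ is invertible then so is the original $\chi = \chi_{\Theta}|_{\Theta = 0}$ obtained by deforming back, and in all cases the translation map is literally the same map of sets $H \to A \otimes_B A$. The one point requiring a little care — the main obstacle, such as it is — is the bookkeeping showing that the phase $\twcoc_\Theta$ is trivial on the relevant pair of homogeneous components; this is exactly where the bi-grading hypotheses \eqref{eq:coact-bigrd} and \eqref{eq:B-deg}, together with Lemma \ref{lem:balancedts-deg-2} and Corollary \ref{cor:tau-bigrd}, do the work, and it is worth spelling out that the conclusion is independent of the chosen representatives of $\tau(h)$ in $A \otimes_B A$, just as in the Remark following Proposition \ref{prop:comodalg-theta}.
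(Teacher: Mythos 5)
Your proof is correct and follows essentially the same route as the paper's: both reduce the statement to checking that $\chi_\Theta\circ\tau=\chi\circ\tau$ by showing the phase $\twcoc_\Theta$ is trivial on the pair $h^{\abrac{1}}$, $\zero{(h^{\abrac{2}})}$, using the degree constraints from Lemma \ref{lem:balancedts-deg-2} and Corollary \ref{cor:tau-bigrd}, and then conclude that $\chi_\Theta$ is invertible iff $\chi$ is, with the same translation map. You are somewhat more explicit than the paper about the left-linearity/faithful-flatness scaffolding and about tracking the degree of $\zero{(h^{\abrac{2}})}$ rather than of $h^{\abrac{2}}$ itself, but the underlying argument is the same.
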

\begin{proof}

Let  $ h \in  H_{ (r , l)} $ be a homogeneous element with $\tau ( h )  = h^{\abrac{ 1 }} \ot_B h^{\abrac{ 2 }}$, 
the starting translation map.
From Corollary \ref{cor:tau-bigrd}, $\deg h^{\abrac{1 }} = - \deg h^{ \abrac{ 2 }}$. Then, with a slight abuse of notation 
($B = B_\theta$ as a vector space), from Lemma \ref{lem:mtheta-m},  
\begin{align*}
\chi_\Theta (\tuno{h} \ot_B \tdue{h}) & = \tuno{h} \cdot_\Theta \zero{\tdue{h}} \ot \one{\tdue{h}} = 
\tuno{h} \zero{\tdue{h}} \ot_B \one{\tdue{h}} \\
& = \chi(\tuno{h}\ot_B \tdue{h}) 
\end{align*}
and $\chi_\Theta$ is invertible if and only if $\chi$ is. Or, the pair $(H_\Theta , A_\Theta)$ 
is a Hopf--Galois extension if and only if the 
pair $(H, A)$ is such. 
\end{proof}

\section{Hopf algebroids} \label{se:had}
We are ready for the Hopf algebroid structure.
We start with a bialgebroid $  \C ( H_\Theta , A_\Theta)$ associated to the 
Hopf--Galois extension $(H_\Theta, A_\Theta)$ of the previous section. 
We next show that the flip can serve as an antipode. 
In \S\S ~\ref{sec:ev-q-shp} and \ref{se:alg-su2sym}, we present two examples. 
Firstly an algebroid for the principal $\SU(2)$-principal bundle 
over the four-sphere $\St^4$ decribed in \S\ref{sec:-q-shp} followed 
by the one for the bundles over the even spheres of \S\ref{sec:HG-SU2}.

\subsection{The bialgebroid $  \C ( H_\Theta , A_\Theta)$} \label{subsec:ES-alg} 
We know from Lemma \ref{lem:balancedts-deg} that the coinvariant elements for the action of 
$H_\Theta$ have trivial right grading. This will clearly be the case also for the coinvariant elements 
for the diagonal action that is needed for the Ehresmann--Schauenburg bialgebroid
Thus the construction of the bialgebroid will be the same for the Hopf--Galois extension $(H_\Theta, A_\Theta)$ 
in Proposition \ref{prop:H-G-Theta} of our scenario II and for the pair $( H , A_\theta)$ 
discussed for Scenario I in Proposition \ref{prop:comodalg-theta}. We describe the former here. 
 
Consider then the Hopf--Galois extension $(H_\Theta, A_\Theta)$. The diagonal coaction
is in \eqref{eq:coact-HTheta}:
\beq\label{eq:coact-Alg}
\delta^{ A \otimes_\Theta A}:
A_\Theta \otimes  A_\Theta \to  A_\Theta \otimes  A_\Theta \otimes  H_\Theta , 
\quad 
\delta^{ A \otimes_\Theta A} (a\ot \tilde a) = 
\zero{a} \otimes \zero{\tilde a} \otimes 
\one{a} \cdot_\Theta \one{\tilde a}.
\eeq
with $V = W = A$.
From the analysis before, and in particular from the fact that the canonical map and translation 
maps are the same as maps between vector spaces, the conclusion is that all the structure 
equations listed in
\S\ref{sec:ES-bialerd} hold true after deformation (which means 
replacing every occurrence of multiplication by the deformed one). 

\begin{lem}
\label{lem:ES-ald} 
Let $(H, A)$ be a Hopf--Galois extension that fulfil the assumptions 
on the bigradings of earlier and let $(H_\Theta , A_\Theta)$ be 
the deformed $(H_\Theta , A_\Theta)$ Hopf--Galois extension obtained in 
Proposition \ref{eq:chi-Theta}. 
Then the deformed coaction $\delta^{ A \otimes_\Theta A}$ in \eqref{eq:coact-Alg}, 
gives rise to the same coinvariant subspace 
as that of $\delta^{A \otimes  A}$:
\begin{align*}
\brac{ A_\Theta \otimes  A_\Theta }^{\mathrm{co} H_\Theta }  
= \brac{ A \otimes  A }^{\mathrm{co} H} . 
\end{align*}
Also, the deformed Ehresmann--Schauenburg bialgebroid
\begin{align*}
\C ( H_\Theta , A_\Theta) 
= \brac{ \brac{ A_\Theta \otimes  A_\Theta }^{\mathrm{co} H_\Theta } , 
\bullet_\Theta }
\end{align*}
with respect to $\C (A , H)$ in Def. \ref{def:reb}), has only the algebra structure changed, given by:
\begin{align}
\label{eq:ES-m-Theta} 
(x \otimes y) \bullet_\Theta ( \tilde x \otimes \tilde y) :=  
x \cdot_\theta \tilde x \otimes  \tilde y \cdot_\theta y \, .
\end{align}
\end{lem}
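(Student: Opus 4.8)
The plan is to establish the two assertions of the lemma in turn: first the identification of the coinvariant subspaces, then the identification of the bialgebroid as a vector space together with the explicit description of the deformed product. For the first assertion, I would argue at the level of homogeneous elements. Take $a \ot \tilde a \in A \ot A$ homogeneous, with $\deg a = (r, l)$ and $\deg \tilde a = (p, q)$. Using the diagonal coaction \eqref{eq:coact-Alg} and the bigrading constraint \eqref{eq:coact-bigrd} on $\delta^A$, one has $\one{a}$ of degree $(s', l)$ and $\one{\tilde a}$ of degree $(s'', q)$ for various $s', s''$, so that by \eqref{eq:Theta-hr-gl} the element $\one{a}\cdot_\Theta\one{\tilde a}$ differs from $\one{a}\one{\tilde a}$ only by a scalar phase factor depending on these degrees. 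Hence the condition $\delta^{A\ot_\Theta A}(a\ot\tilde a) = a\ot\tilde a\ot 1_H$ is equivalent to the undeformed one $\delta^{A\ot A}(a\ot\tilde a) = a\ot\tilde a\ot 1_H$ (the phase factor is necessarily $1$ whenever the equation holds, since then the target degree on both sides must match). This gives $(A_\Theta\ot A_\Theta)^{\mathrm{co}H_\Theta} = (A\ot A)^{\mathrm{co}H}$ as vector spaces.

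For the second assertion, since the underlying $B$-bimodule $\C(A,H) = (A\ot A)^{\mathrm{co}H}$ is unchanged (as just shown), and since the coring structure — coproduct \eqref{copro} and counit \eqref{counit} — is built only from the translation map and the coaction, which by Propositions \ref{prop:comodalg-Theta} and \ref{prop:H-G-Theta} and Corollary \ref{cor:tau-bigrd} coincide with their undeformed counterparts when read appropriately, the coring structure of $\C(H_\Theta, A_\Theta)$ agrees with that of $\C(A,H)$. What genuinely changes is the algebra (bialgebroid product), since the product of $A\ot A^{op}$ involves the multiplication of $A$, which is deformed. Starting from the bialgebroid product of Definition \ref{def:reb}, $(x\ot\tilde x)\bullet(y\ot\tilde y) = xy\ot\tilde y\tilde x$, and replacing the multiplication of $A$ by $\cdot_\Theta$ (and correspondingly the opposite multiplication), one obtains exactly $(x\ot y)\bullet_\Theta(\tilde x\ot\tilde y) = x\cdot_\Theta\tilde x\ot\tilde y\cdot_\Theta y$, which is \eqref{eq:ES-m-Theta}. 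I would also remark that this is well defined — i.e. the product of two coinvariant elements is again coinvariant — by reproducing the short computation in the excerpt just before Definition \ref{def:reb} with every occurrence of multiplication replaced by $\cdot_\Theta$; the cocycle condition \eqref{eq:2-coc-e-theta} makes the associativity go through just as it did for $m_\Theta$ itself.

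The main obstacle, modest as it is, lies in the bookkeeping of degrees ensuring that \eqref{eq:ES-m-Theta} is consistent: one must check that $x\cdot_\Theta\tilde x\ot\tilde y\cdot_\Theta y$ is independent of the choice of homogeneous representatives and that it lands in the coinvariant subspace. For the latter, the key structural input is Lemma \ref{lem:balancedts-deg} (the coinvariants have trivial right grading), which forces the relevant degrees to pair up so that the phase factors appearing in $\delta^{A\ot_\Theta A}$ of the product collapse appropriately — this is precisely the point flagged in the paragraph preceding the lemma. The remaining verifications (unit $1_A\ot 1_A$, source and target maps $s(b) = b\ot 1_A$, $t(b) = 1_A\ot b$ still being algebra maps for $\bullet_\Theta$, and the bialgebroid compatibility axioms of \S\ref{se;pr}) are then routine, following from the corresponding undeformed statements because, as repeatedly used, all maps involved are unchanged as maps of vector spaces and only the multiplication is twisted by a bicharacter.
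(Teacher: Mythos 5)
Your proof is correct, but it reaches the first (and main) assertion by a genuinely different road than the paper. The paper's own proof is a one-liner: it invokes Lemma \ref{lem-2vers}, which identifies $(A\ot A)^{\mathrm{co}H}$ with the subspace $\C(A,H)$ of \eqref{ec1} cut out by the translation map, and then observes that the translation map is unchanged under deformation (Proposition \ref{prop:H-G-Theta}), so the cut-out subspace is unchanged as well. You instead work directly with the diagonal coaction and show, on homogeneous elements, that $\delta^{A\ot_\Theta A}$ differs from $\delta^{A\ot A}$ only by phases that are trivial wherever it matters. Your route is more self-contained — it does not lean on Lemma \ref{lem-2vers}, whose deformed version would itself need a word of justification since the condition in \eqref{ec1} involves the deformed product $\tau(\one{a})\cdot_\Theta\tilde a$ — at the cost of more degree bookkeeping. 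One step of yours deserves an extra sentence: the parenthetical ``the phase factor is necessarily $1$ whenever the equation holds'' is correct but compressed, because the phases $\twcoc_\Theta\brac{(s',l),(s'',q)}$ genuinely vary over the components $(s',s'')$ of $\delta^A(a)\ot\delta^A(\tilde a)$, so there is no single overall scalar to pull out. The clean argument is to apply the projections $p_{(r,s')}\ot p_{(p,s'')}\ot\id$ to the undeformed coinvariance identity: every component with $(s',s'')\neq(l,q)$ must vanish outright (and hence is unaffected by whatever phase it acquires), while the surviving component carries phase $\twcoc_\theta(l,q)\twcoc_{-\theta}(l,q)=1$; one also uses that the coinvariant subspace is graded, so homogeneous $a\ot\tilde a$ suffice. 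For the second assertion your treatment matches, and exceeds in detail, what the paper records; the only caveat is notational, namely that \eqref{eq:ES-m-Theta} is written with $\cdot_\theta$ while your derivation produces $\cdot_\Theta$ — the paper is being loose there, and your reading (the deformed products of $A_\Theta$ and $A_\Theta^{op}$) is the consistent one.
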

\begin{proof}
The results follows from the identification in Lemma \ref{lem-2vers}, which uses only the translation map that is unchanged (as a map between vector spaces) when deforming.
\end{proof}

\subsection{The flip map as the antipode} \label{subsec:flip-atpd} 
The bialgebroids of the previous section gets in fact a structure of Hopf algebroid with a suitable antipode.
Now, when the structure Hopf algebra $H$ is commutative, the flip map 
preserves the coinvariant elements of the diagonal coaction. Indeed,  given
\begin{align}
\label{eq:flip-S} 
S: A \otimes  A \to A \otimes  A , \quad  
a \otimes  \tilde  a \mapsto  
\tilde a \otimes a , 
\end{align}
for any  coinvariant $a \otimes  \tilde a \in  A \otimes  A$, 
by swapping $a$ and  $ \tilde a$ in 
$ a \otimes  \tilde a \otimes 1 =  
a_{ (0)}   \otimes \tilde  a_{ (0)} \otimes a_{ (1)} \tilde a_{ (1)} $,
we see that  $ \tilde a \otimes  a$ is  coinvariant as well:
$$
\tilde a \otimes  a \otimes  1 =
\tilde a_{ (0)}   \otimes  a_{ (0)} \otimes a_{ (1)} \tilde a_{ (1)}  
=
\tilde a_{ (0)}   \otimes  a_{ (0)} \otimes  \tilde a_{ (1)}  a_{ (1)} ,
$$
where the last equal sign invokes the commutativity of $H$. Therefore, 
when restricted to the  coinvariant subspaces the flip is a candidate for the antipode of 
$\C( H, A)$ and $\C ( H, A_\theta)$. 

In the more general situation, despite $H_\Theta$ needs no longer stay commutative after the $\theta$-deformation,
the flip $S$ still maps $\C ( H_\Theta , A_\Theta)$ into itself since we have shown in Lemma \ref{lem:ES-ald} 
that $\C ( H_\Theta , A_\Theta)$ and $ \C (H, A)$ are identical as vector spaces 
(This fact will be explicitly seen for the example in \S\ref{sec:ev-q-shp} below.) 

The main result of this section is that the flip $S$ makes 
$\C ( H_\Theta , A_\Theta)$ into a Hopf algebroid.
\begin{thm}
By only deforming multiplication related structures of the Hopf 
algebroid $\C( H, A)$ over $B$, the resulting $\C ( H_\Theta , A_\Theta)$ 
is a Hopf algebroid, but with base algebra $B_\Theta$.
\end{thm}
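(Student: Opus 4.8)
The plan is to verify the Hopf algebroid axioms \eqref{hopbroid1} and \eqref{hopbroid2} of \cite[Def.~4.1]{BS04} for the flip map $S$ acting on the bialgebroid $\C(H_\Theta, A_\Theta)$ over the base $B_\Theta$. The first step is to confirm that $S$ of \eqref{eq:flip-S} is a well-defined algebra anti-homomorphism $\C(H_\Theta,A_\Theta)\to\C(H_\Theta,A_\Theta)$. That it lands in the right vector space is exactly the content of the discussion preceding the theorem (using that $\C(H_\Theta,A_\Theta)=\C(H,A)$ as vector spaces, together with Lemma~\ref{lem:ES-ald}); the anti-multiplicativity with respect to $\bullet_\Theta$ in \eqref{eq:ES-m-Theta} is immediate from the explicit formula $(x\ot y)\bullet_\Theta(\tilde x\ot\tilde y)=x\cdot_\theta\tilde x\ot\tilde y\cdot_\theta y$: applying $S$ flips each leg and exchanges the order of the two factors, which is precisely $S(\tilde x\ot\tilde y)\bullet_\Theta S(x\ot y)$. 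Invertibility of $S$ is obvious since $S^{2}=\id$, so $S^{-1}=S$.

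Next I would check $S\circ t=s$. With the source and target maps of Definition~\ref{def:reb} adapted to the deformed base, $t(b)=1_A\ot b$ and $s(b)=b\ot 1_A$ for $b\in B_\Theta$, so $S(t(b))=S(1_A\ot b)=b\ot 1_A=s(b)$, which holds on the nose and needs no deformation argument. The remaining task is the pair of coproduct compatibility identities in \eqref{hopbroid2}. Here the strategy is to transport the already-known undeformed identities for $\C(H,A)$ (established in \cite{HL21}, or directly from the translation-map properties \eqref{p1}--\eqref{p6}) to the deformed setting. The crucial observation is that the coproduct $\Delta$ of $\C(H_\Theta,A_\Theta)$ is literally the same map as that of $\C(H,A)$ — by Lemma~\ref{lem:ES-ald} only the multiplication changes — and that, by Corollary~\ref{cor:tau-bigrd} together with Lemma~\ref{lem:balancedts-deg}, all the tensor factors appearing in $\Delta(a\ot\tilde a)=\zero{a}\ot\tuno{\one a}\ot_B\tdue{\one a}\ot\tilde a$ have degrees that pair up to zero in exactly the products that occur when one writes out \eqref{hopbroid2}. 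Thus, by Lemma~\ref{lem:mtheta-m}, every deformed product $\cdot_\theta$ appearing in those identities coincides with the undeformed product, and the deformed identities reduce verbatim to the undeformed ones.

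Concretely, I would write out $\onet{(S\one h)}\,\two h\ot_B\twot{S(\one h)}$ for $h=a\ot\tilde a\in\C(H_\Theta,A_\Theta)$ using the flip and the explicit $\Delta$, collect the multiplications that appear, and invoke degree-matching (the left leg of the translation map has degree $-\deg$ of the right leg, and coinvariants have trivial right degree) to replace each $\cdot_\theta$ by $\cdot$; the identity then follows from its classical counterpart. The same computation handles the $S^{-1}=S$ version. Finally the consequence $S(\one h)\two h=t\circ\varepsilon\circ S(h)$ is automatic from \cite[Def.~4.1]{BS04} once the axioms hold, and one should also note $\varepsilon$ is unchanged so no separate check is needed there.

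The main obstacle I anticipate is bookkeeping rather than conceptual: one must be careful that the degree-matching argument really applies to \emph{every} product in \eqref{hopbroid2} — not just the evident ones — including the hidden multiplications inside the balanced tensor product over $B_\Theta$ and inside $\varepsilon$. In particular one has to check that when a factor of $B_\Theta$ is moved across $\ot_B$ the relevant degrees still pair to zero (this is where Lemma~\ref{lem:balancedts-deg}, giving $B\subset\bigoplus_r A_{(r,0)}$, and the freedom in choosing representatives in Lemma~\ref{lem:balancedts-deg-2} are essential), and that the counit, evaluated on elements of $\C$, also only ever multiplies opposite-degree pairs so that $\varepsilon$ is genuinely unaffected by the deformation. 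Once this degree audit is complete, the theorem follows by reduction to the classical (undeformed, commutative-$H$) case already treated.
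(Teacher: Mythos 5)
Your checklist — anti-multiplicativity of the flip for $\bullet_\Theta$, the identity $S\circ t=s$, and the two coproduct compatibilities \eqref{hopbroid2} — is the right one, and the first two items are handled correctly. The gap lies in the central step: the claim that in \eqref{hopbroid2} every deformed product occurs between factors of opposite total degree, so that by Lemma \ref{lem:mtheta-m} each $\cdot_\Theta$ can be replaced by the undeformed product and the identities reduce verbatim to their classical counterparts. This is true for the product $(a_{(1)})^{\abrac{1}}\cdot_\Theta (a_{(1)})^{\abrac{2}}$ of the two legs of the translation map, which by Corollary \ref{cor:tau-bigrd} pairs degrees to zero and collapses to $\varepsilon(a_{(1)})1$ via \eqref{p5}. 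It is \emph{not} true for the other product the computation produces, namely $(\tilde a_{(1)})^{\abrac{2}}\cdot_\Theta a_{(0)}$ and, after the counit has been applied, $(\tilde a_{(1)})^{\abrac{2}}\cdot_\Theta a$. By Lemma \ref{lem:balancedts-deg-2} the leg $(\tilde a_{(1)})^{\abrac{2}}$ has left degree $p$ that varies freely with the components and representatives, while $a$ carries an unrelated left degree; coinvariance of $a\ot\tilde a$ only constrains the \emph{right} degrees (Lemma \ref{lem:balancedts-deg}), so the phase $\twcoc_\theta(p,\deg_{\mathrm{left}}a)$ is nontrivial in general. Hence the deformed identity is not the undeformed one decorated with cancelling phases, and the proposed reduction breaks down precisely at the concluding step. (You flag this as a bookkeeping risk in your last paragraph, but the audit in fact fails rather than merely being delicate.)

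What is needed at that point is the \emph{deformed} form of the coinvariance characterization \eqref{ec1}, namely $\zero{\tilde a}\ot\tau(\one{\tilde a})\cdot_\Theta a=\tilde a\ot a\ot_{B_\Theta}1$, applied to the coinvariant element $\tilde a\ot a=S(h)$. This identity is available not by degree matching but because $(H_\Theta,A_\Theta)$ is itself a faithfully flat Hopf--Galois extension (Proposition \ref{prop:H-G-Theta}) with unchanged translation map, so Lemma \ref{lem-2vers} applies to it directly. That is the route the paper takes in Lemma \ref{lem:anti-comp-I}: the computation is carried out entirely inside the deformed algebra, using only \eqref{p5}, counit--coaction compatibility, and the deformed \eqref{ec1}, with no appeal to the undeformed case or to commutativity of $H$ or $A$. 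Your argument is repaired by replacing the ``replace $\cdot_\Theta$ by the undeformed product everywhere'' step with a direct invocation of these deformed translation-map identities; as a bonus, this removes the need to first establish the classical case separately.
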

\begin{proof}
One needs to verify the compatibility conditions in 
\eqref{hopbroid1} and \eqref{hopbroid2}.  The latter one is the
less nontrivial one and is handled in  Lemma \ref{lem:anti-comp-I} below.
We point out that the computations below work for 
both $\C( H, A)$ and $\C ( H_\Theta , A_\Theta)$ since they do not rely
on the commutativity of the underlying algebra structures in the Hopf--Galois
extension.
\end{proof}
\begin{lem}
\label{lem:anti-comp-I} 
The flip $ S :  \C ( H_\Theta , A_\Theta) \to \C ( H_\Theta , A_\Theta)$ 
with $S^{-1} = S $ fulfils   
the compatibility conditions in \eqref{hopbroid2}, 
that is, for all $ h \in \C ( H_\Theta , A_\Theta)$:
\begin{align*}
(S^{-1} h_{ (2)} )_{ (1)}  \otimes_{B_\Theta}
(S^{-1} h_{ (2)} )_{ (2)} \bullet_\Theta   h_{ (1)} 
&=
S^{-1} h \otimes_{B_\Theta} 1 , 
\\
(S h_{ (1)} )_{ (1)} \bullet_\Theta  h_{ (2)} \otimes_{B_\Theta}
(S h_{ (1)} )_{ (2)} 
&=
1 \otimes_{B_\Theta} S(h) .
\end{align*}
\end{lem}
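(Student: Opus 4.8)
The statement to prove is Lemma~\ref{lem:anti-comp-I}, namely that the flip $S$ on $\C(H_\Theta, A_\Theta)$ satisfies the two compatibility conditions \eqref{hopbroid2} for an invertible antipode. Since $S^{-1} = S$, the two identities are essentially symmetric, so I would concentrate on one of them, say the second:
\begin{align*}
(S h_{(1)})_{(1)} \bullet_\Theta h_{(2)} \ot_{B_\Theta} (S h_{(1)})_{(2)} = 1_\cL \ot_{B_\Theta} S(h) ,
\end{align*}
and then indicate that the first follows by the same computation with the roles of the two tensor legs exchanged. The first step is to unwind all the ingredients: for $h = x \ot \tilde x \in \C(H_\Theta, A_\Theta) = (A \ot A)^{coH}$, the coproduct from \eqref{copro} reads $\Delta(x \ot \tilde x) = \zero{x} \ot \tuno{\one{x}} \ot_B \tdue{\one{x}} \ot \tilde x$, the flip sends $x \ot \tilde x \mapsto \tilde x \ot x$, and the bialgebroid product is $\bullet_\Theta$ from \eqref{eq:ES-m-Theta}. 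So I would write out $(Sh_{(1)})_{(1)} \bullet_\Theta h_{(2)} \ot (Sh_{(1)})_{(2)}$ explicitly in terms of $x$, $\tilde x$, the translation map $\tuno{(\cdot)} \ot_B \tdue{(\cdot)}$, and the coaction, then simplify.

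**Key steps.** After substituting, the left-hand side becomes a fairly long expression in which the translation map appears applied to $\one{x}$ and then again inside $\Delta$ applied to a piece of it; the natural tool is the "cocycle-type" identity \eqref{p6}, $\tuno{\one{h}} \ot_B \tdue{\one{h}} \tuno{\two{h}} \ot_B \tdue{\two{h}} = \tuno{h} \ot_B 1_A \ot_B \tdue{h}$, together with \eqref{p5}, $\tuno{h}\tdue{h} = \varepsilon(h)1_A$, and the defining property of coinvariants in the form \eqref{ec1}, $\zero{x} \ot \tau(\one{x}) \tilde x = x \ot \tilde x \ot_B 1_A$. The strategy is: (1) expand; (2) collapse the nested translation-map terms using \eqref{p6}; (3) use that $x \ot \tilde x$ is coinvariant, i.e.\ \eqref{ec1}, to absorb $\tau(\one{x})\tilde x$ and reduce the right tensor leg; (4) recognize what remains as $1_A \ot_A \ot_{B_\Theta} (\tilde x \ot x) = 1_\cL \ot_{B_\Theta} S(h)$. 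Crucially, at every multiplication one must check — via Lemma~\ref{lem:mtheta-m} and Corollary~\ref{cor:tau-bigrd} — that the degrees of the factors being multiplied sum to zero, so that $\cdot_\Theta$ may be silently replaced by the undeformed product $\cdot$; once this is granted, the computation is identical to the commutative-$H$ case. I would state this degree-bookkeeping as the opening remark of the proof and then run the calculation formally.

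**The main obstacle.** The genuinely delicate point is exactly this degree bookkeeping: one must verify that in the expression $(Sh_{(1)})_{(1)} \bullet_\Theta h_{(2)}$, and in the subsequent applications of \eqref{p6} and \eqref{ec1}, each pair of elements combined by $\cdot_\Theta$ has opposite degrees, so that no stray cocycle phases $\twcoc_\Theta$ survive. This uses that $h = x \ot \tilde x$ is coinvariant, hence by Lemma~\ref{lem:balancedts-deg} $\tilde x$ has right-degree $0$ and $x,\tilde x$ have opposite left-degrees; that the translation-map legs $\tuno{(\cdot)}, \tdue{(\cdot)}$ have opposite degrees by Corollary~\ref{cor:tau-bigrd}; and that the coaction $\delta^A$ preserves degree. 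Once these are lined up the phases cancel in pairs and one is back to the classical identity. The remaining work — the chain of equalities using \eqref{p5}, \eqref{p6} and \eqref{ec1} — is routine Hopf--Galois bookkeeping of the same flavour as the coring axiom checks in \S\ref{sec:ES-bialerd}, and I would present it compactly rather than belabour it. Finally I would remark that \eqref{hopbroid1}, $S \circ t = s$, is immediate from $t(b) = 1_A \ot b \mapsto b \ot 1_A = s(b)$, completing the Hopf algebroid structure.
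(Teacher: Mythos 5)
Your overall strategy is the paper's: unwind the coproduct \eqref{copro}, the flip and the product $\bullet_\Theta$ for $h=x\ot\tilde x$, collapse the translation-map terms, use the coinvariance characterization \eqref{ec1}, and observe that all deformed products occurring have factors of cancelling degree so the computation reduces to the undeformed one (the paper achieves this last point by the blanket remark at the start of \S\ref{subsec:ES-alg} that all structure equations of \S\ref{sec:ES-bialerd} hold verbatim with $\cdot_\Theta$, rather than by phase-by-phase bookkeeping, but these are equivalent). Two points in your plan would need repair in execution. First, the paper proves the \emph{first} identity of \eqref{hopbroid2}, which is the easier one: since the Ehresmann coproduct \eqref{copro} only coacts on the first tensor leg, $\Delta(S^{-1}h_{(2)})=\Delta(\tilde a\ot(a_{(1)})^{\langle 2\rangle})$ needs nothing beyond the coaction on $\tilde a$, and the whole computation closes with \eqref{p5}, the counit axiom and \eqref{ec1}; the identity \eqref{p6} you single out as ``the natural tool'' is not used. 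Second, for the identity you chose to prove, $\Delta(Sh_{(1)})=\Delta(\tuno{\one{x}}\ot\zero{x})$ forces you to compute the coaction on the translation-map leg $\tuno{\one{x}}$, which requires \eqref{p1} (and then \eqref{p3} together with the coinvariance identity $x\ot\zero{\tilde x}\ot\one{\tilde x}=\zero{x}\ot\tilde x\ot S(\one{x})$), none of which appear in your list of tools; and because \eqref{copro} is asymmetric in the two legs, the claim that the other identity ``follows by exchanging the roles of the tensor legs'' is not literally true --- each identity needs its own (different) chain of translation-map properties. With those substitutions your argument goes through and is genuinely complementary to the paper's, which verifies the first identity and leaves the second to the reader.
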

\begin{proof}
We shall prove the first one as an example and leave the second one to avid readers.

Write $h = a \otimes  \tilde a \in \C ( H_\Theta , A_\Theta)$,
where $a, \tilde a \in A_\Theta$, then
the coproduct in \eqref{copro} reads
\begin{align*}
\Delta (h) = h_{ (1)} \otimes_{B_\Theta}  h_{ (2)}   =
\brac{ a_{ (0)} \otimes  (a_{ (1)})^{\abrac{ 1 } }   }
\otimes_{B_\Theta} \brac{ (a_{ (1)})^{\abrac{ 2 } } \otimes  \tilde a } .
\end{align*}
We compute:
\begin{align*}
(S^{-1} h_{ (2)} )_{ (1)}\otimes_{B_\Theta}  & 
(S^{-1} h_{ (2)} )_{ (2)} \bullet_\Theta   h_{ (1)} 
\\ &= 
\brac{ \tilde a \otimes   (a_{ (1)})^{\abrac{ 2 } }  }_{ (1)} 
\otimes_{B_\Theta}  
\brac{ \tilde a \otimes   (a_{ (1)})^{\abrac{ 2 } }  }_{ (2)} 
\bullet_\Theta 
\brac{ a_{ (0)} \otimes  (a_{ (1)})^{\abrac{ 1 } }    }
\\ &=  
\tilde a_{ (0)} \otimes  (\tilde a_{ (1)})^{\abrac{ 1 }} 
\otimes_{B_\Theta}  
\brac{  ( \tilde a_{ (1)} )^{\abrac{ 2 }} \otimes (a_{ (1)})^{\abrac{ 2 } }  }
\bullet_\Theta 
\brac{  a_{ (0)} \otimes  (a_{ (1)})^{\abrac{ 1 } }  }
\\ &=
\tilde a_{ (0)} \otimes  (\tilde a_{ (1)})^{\abrac{ 1 }} 
\otimes_{B_\Theta}  
( \tilde a_{ (1)} )^{\abrac{ 2 }} \cdot_\Theta a_{ (0)}  \otimes 
(a_{ (1)})^{\abrac{ 1 } } \cdot_\Theta (a_{ (1)})^{\abrac{ 2 } }
\\ &= 
\tilde a_{ (0)} \otimes  (\tilde a_{ (1)})^{\abrac{ 1 }} 
\otimes_{B_\Theta}  
( \tilde a_{ (1)} )^{\abrac{ 2 }} \cdot_\Theta a_{ (0)}  \otimes 
\varepsilon ( a_{ (1)} ) 1_{ A_\Theta} 
\\ &=
\tilde a_{ (0)} \otimes  (\tilde a_{ (1)})^{\abrac{ 1 }} 
\otimes_{B_\Theta}  
( \tilde a_{ (1)} )^{\abrac{ 2 }} \cdot_\Theta a \otimes  1   ,
\end{align*}
where, in the last two steps,
we have used \eqref{p5} and  the compatibility between the counit 
$ \varepsilon: H_\Theta \to \mathbb{C}$ and the coaction $\delta^A$. 
To continue:
\begin{align*}
(S^{-1} h_{ (2)} )_{ (1)}  \otimes_{B_\Theta}
(S^{-1} h_{ (2)} )_{ (2)}  \bullet_\Theta  h_{ (1)} 
&= 
\brac{ 
\tilde a_{ (0)} \otimes  (\tilde a_{ (1)})^{\abrac{ 1 }} 
} \otimes_{B_\Theta}
\brac{ 
( \tilde a_{ (1)} )^{\abrac{ 2 }} \bullet_\Theta   a \otimes  1
} 
\\ & =   
\tilde a_{ (0)} \otimes \tau \brac{ \tilde a_{ (1)}  } \cdot_\Theta  a 
\\ & =
\tilde a \otimes  a \otimes_{ B_\Theta }  1 
= S^{-1} h \otimes_{ B_\Theta }   1 ,
\end{align*}
where we need \eqref{ec1}, which is an equivalent  description for
$a \otimes  \tilde a \in \C ( H_\Theta , A_\Theta)$,
to complete the second line.
\end{proof}

\subsection{The algebroid with $\SU(2)$-symmetry}\label{se:alg-su2sym}
With respect to the example in \S\ref{sec:HG-SU2},
denote $A=A(\Sk^7)$, $H=A(\SU(2))$ and $B=A(\St^4) = A^{co H}$ the subalgebra of invariants and, 
as usual $\delta^A(a) = \zero{a} \pot \one{a}$ and $\tau(h) = \tuno{h} \ot_B \tdue{h}$.

Consider then the diagonal coaction of $H$ on the tensor product algebra $A\ot  A$:
$$
\delta^{A\ot  A}: A\ot  A \to A\ot  A\ot  H, \quad a\ot  \tilde{a}  \mapsto
\zero{a}\ot  \zero{\tilde{a}} \ot   \one{a}\one{\tilde{a}} \, .
$$

\begin{lem}
The $B$-bimodule $\C(A,H)$ of coinvariant elements for the diagonal coaction is generated 
by elements of the tensor products $p \ot 1$ and $1\ot {q}$ together with 
$$
V = \Psi \pot \Psi^\dagger. 
$$
\begin{proof} 
It is clear that elements of $p \ot 1$ and $1\ot {q}$ are coinvariants. For $V = \Psi \pot \Psi^\dagger$:
$$
\delta^{A \ot  A}(V) = \zero{\Psi} \pot \zero{\Psi^\dagger} \pot \one{\Psi} \one{\Psi^\dagger}
= \Psi \ot \Psi^\dagger \pot (w w^\dagger)= V \pot (w w^\dagger) = V \pot \II_2 ,
$$
in parallel with the coinvariance \eqref{once}. 
\end{proof}
\end{lem}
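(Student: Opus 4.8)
The plan is to verify the coinvariance of the three families directly, and then to obtain the generation statement from the $\SU(2)$-isotypic decomposition of $A$, invoking at one point the projectivity of the associated modules. The coinvariance is the routine part: for the entries of $p \ot 1$ it is exactly \eqref{once}; for the entries of $1 \ot q$ it suffices to note that $q_{mn} = \sum_r \Psi^\dagger_{rn}\Psi_{mr}$ lies in $B = A^{coH}$, being the $(m,n)$-entry of the projection $\Psi \cdot_{op} \Psi^\dagger$ of $A^{op}$ whose invariance is \eqref{once} read in $A^{op}$; and for $V = \Psi \pot \Psi^\dagger$ it is the displayed computation, which uses only $w w^\dagger = \II_2$.

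For the generation statement I would first decompose $A = \bigoplus_{n \geq 0} A_{(n)}$ into isotypic components for the irreducible corepresentations $V_n$ of $H = A(\SU(2))$; since the multiplication of $A$ is $H$-colinear, each $A_{(n)}$ is a sub-$B$-bimodule. As every $\SU(2)$-irrep is self-dual, $V_m \ot V_n$ contains the trivial corepresentation precisely when $m = n$, and then with multiplicity one; hence $(A \ot A)^{coH} = \bigoplus_{n \geq 0} (A_{(n)} \ot A_{(n)})^{coH}$, each summand being, via the unique invariant contraction $V_n \ot V_n \to \IC$, the $B \ot B$-span (for the action $b \triangleright (a \ot \tilde a) \triangleleft \tilde b = b a \ot \tilde a \tilde b$) of contracted tensors of a degree-$n$ monomial in the entries of $\Psi$ with a degree-$n$ monomial in the entries of $\Psi^\dagger$.

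The key remaining input is that $A_{(n)}$ is generated over $B$ by the degree-$n$ monomials in the entries of $\Psi$ — equivalently by those in the entries of $\Psi^\dagger$ — which I would deduce from the fact that $\Psi$ realises the $V_{1/2}$-valued functions as a direct summand, via the projection built from $p$, of the trivial $\IC^4$-bundle, together with $B$ being generated as an algebra by the quadratics $p_{mn}$; here one may either quote \cite{LS04} directly or, since this bundle belongs to Scenario I of \S\ref{subsec:ScI} (so that the coaction, the subspace $\C(A,H) \subset A \ot A$, the homogeneous generators and the translation map are unaffected by the $\theta$-deformation and only products get twisted by phases on homogeneous elements, cf.\ Lemma \ref{lem:mtheta-m}), transport the statement from the classical quaternionic Hopf bundle $S^7 \to S^4$. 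Granting this, the contracted tensors above are exactly the $n$-fold bialgebroid products $V \bullet \cdots \bullet V$; for $n = 0$ the contraction is $1_A \ot 1_A = \tfrac12 \sum_m p_{mm} \ot 1 = \tfrac12 \sum_m 1 \ot q_{mm}$, since $\sum_m p_{mm} = \sum_m q_{mm} = \mathrm{tr}(\Psi^\dagger \Psi) = 2$, so that $B \ot B$ — and hence the $n=0$ summand — is generated as a $B$-bimodule by the entries of $p \ot 1$ and, alternatively, by those of $1 \ot q$. Assembling the summands proves the claim, strictly speaking as a $B$-bimodule algebra since the pieces with $n \geq 2$ genuinely require the product of several copies of $V$.

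The step I expect to be the main obstacle is precisely the structural input of the previous paragraph — ruling out ``new'' copies of $V_n$ in higher polynomial degree and identifying the $\SU(2)$-contractions with products of $V$ — which is where the specific geometry of the (classical or $\theta$-deformed) quaternionic Hopf bundle really enters. A more hands-on alternative, bypassing the representation theory, is to induct on the total degree of an ordered monomial in the $\psi_a, \psi_a^*$, using the spherical relation $\sum_a \psi_a^* \psi_a = 1$ and $\Psi^\dagger \Psi = \II_2$ at each step to rewrite a coinvariant element as a $B \ot B$-combination of products of the $V_{mn}$ and of $p_{mn} \ot 1$, $1 \ot q_{mn}$.
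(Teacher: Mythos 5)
The paper's proof of this lemma consists \emph{only} of the coinvariance check: it verifies that $p \ot 1$, $1 \ot q$ and $V = \Psi \pot \Psi^\dagger$ are coinvariant (the displayed computation for $V$, plus the observation that the entries of $p$ and $q$ lie in $B$) and says nothing about why these elements \emph{generate} $\C(A,H)$. Your first paragraph therefore already coincides with the paper's entire argument, and everything after it is an attempt to supply the generation half that the paper omits. Your outline for that half is the natural one: cosemisimplicity of $A(\SU(2))$ gives the isotypic decomposition, self-duality and multiplicity one of the trivial corepresentation in $V_n \ot V_n$ reduce $(A\ot A)^{coH}$ to the diagonal blocks, and the load-bearing input is exactly the one you isolate, namely that the $V_n$-isotypic component $A_{(n)}$ is generated over $B$ by the degree-$n$ monomials in the entries of $\Psi$; this is a statement about the associated modules of the bundle of \cite{LS04} and does transport from the classical quaternionic Hopf fibration via Lemma \ref{lem:mtheta-m}, since the deformation only rescales homogeneous products by phases. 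Two cautions if you flesh this out. First, $V^{\bullet n}$ is the contraction over the \emph{reducible} corepresentation $V_{1/2}^{\ot n}$, so its entries also contain the contractions over the lower constituents $V_k$, $k<n$; identifying the genuine $V_n$-contraction with bialgebroid products of $V$ therefore requires an induction on $n$, working modulo the span of the $V^{\bullet k}$ with $k<n$ and of $B\ot B$ (for $\SU(2)$ the entries of $V^\dagger$ are, up to signs, again entries of $V$, so no extra generators are needed there). Second, as you correctly note, the statement can only be meant as generation of a $B$-bimodule algebra under $\bullet$, not as a finitely generated $B$-bimodule; the paper is silent on this as well. With those provisos your argument is sound and establishes strictly more than the paper's own proof does.
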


 With the flip $\sigma(a \ot b) = b \ot a$ we define
\beq\label{antipode1case}
S_\C(V) := \sigma(\Psi \pot \Psi^\dagger) = V^\dagger, \quad \textup{or} \quad S_\C(V_{mn}) 
= V^\dagger_{mn} = \sum_r \Psi^\dagger_{rn} \pot \Psi_{mr}.
\eeq
Then, a direct computation shows that 
\begin{align} \label{alter}
S_\C(V) V &= V^\dagger V = 1 \ot \, \Psi \cdot_{op} \Psi^\dagger = 1 \ot \, {q} \nn \\
V S_\C(V)  &= V V^\dagger = \Psi \cdot \Psi^\dagger \, \ot 1 = p \, \ot 1 .
\end{align}
These then are relations among the elements of $V$ as generators of the 
$B$-bimodule $\C(A,H)$. The latter has the structure of a Hopf algebroid.

Firstly, 
the projections $V V^\dagger = p \ot 1$ and $V^\dagger V= 1\ot {q}$ are the two embedded copies of the 
4-sphere $A(\St^4)$ in $\C(A,H)$: $A(\St^4) \ot 1$ and $1 \ot A(\St^4)$, via source and target map respectively, 
as explicitly described in Lemma \ref{source-target} below. 

Next, 
according to the definition \eqref{copro} 
a coproduct $\Delta : \C(A,H) \to \C(A,H) \ot_B \C(A,H)$, is given on the matrix $V$ of generators by
\begin{align} \label{ccop}
\Delta (V) = \zero{\Psi} \pot \tuno{\one{\Psi}} \pot _B \tdue{\one{\Psi}} \pot \Psi^\dagger 
= \Psi \ot \Psi^\dagger \pot_B \Psi \ot \Psi^\dagger = V \pot _B V.
\end{align}
In components this reads:
\beq\label{copcom}
\Delta(V_{m n}) = \sum_r \, V_{m r} \ot_B V_{r n}. 
\eeq
From \eqref{ccop} one also gets
\beq\label{ccops}
\Delta(S_\C(V)) = \Delta(V^\dagger) = \sigma(V^\dagger \pot _B V^\dagger)
\eeq
or, in components:
\beq\label{copcomop}
\Delta(V^\dagger_{mr}) = \sum_r \, V^\dagger_{r n} \ot_B V^\dagger_{m r} .
\eeq

Finally, the map $S_\C$ in \eqref{antipode1case} is indeed an antipode for $\C(A,H)$. 
Since $S_\C$ is the flip, condition \eqref{hopbroid1} is obvious. 
We are left to show condition \eqref{hopbroid2}. For this, take $h=V$.
With expressions \eqref{ccop} and \eqref{ccops} for the coproducts, and using \eqref{alter}:  
\begin{align} \label{sverif}
\onet {(S_\C\one{h})} \two{h} \ot_B \twot {S_\C(\one{h})} 
& = \onet {(S(\one{V}))} \, \two{V} \pot_B \twot {(S(\one{V}))} \nn \\
& = \onet {(S_\C(V))} \, V \pot_B \twot {(S_\C(V))} \nn \\
& = \onet{(V^\dagger)} \, V \pot_B \twot{(V^\dagger)} 
= V^\dagger \, V \pot_B V^\dagger \nn \\
& = 1 \ot {q} \pot_B \, V^\dagger = 1 \ot 1 \ot_B \, {q} \, V^\dagger \nn \\
& = 1 \ot 1 \ot_B V^\dagger = 1 \ot 1 \ot_B S_\C(V).
\end{align}
Since elements of ${q}$ are in $B$ they can be crossed over $B$-tensor products, and we used the relation 
$q \,V^\dagger = V^\dagger$. 
The other condition in \eqref{hopbroid2} is similar since $S_\C^{-1} = S_\C$. 

\noindent
In components of $V$ this works as follows. 
Take $h=V_{mn}$, with $S_\C(h) = V^\dagger_{mn}$. Using the expressions for the coproduct, we compute:
\begin{align}\label{sverif-bis}
\onet {(S_\C\one{h})} \two{h} \ot_B \twot {S_\C(\one{h})} 
& = \sum_{r} \onet {(S_\C(V)_{m r})} V_{rn} \ot_B \twot {(S_\C(V)_{mr})} \nn \\
& = \sum_{r} \onet {(V^\dagger_{m r})} V_{rn} \ot_B \twot {(V^\dagger_{m r})} \nn \\
& = \sum_{rs} V^\dagger_{s r} V_{rn} \ot_B V^\dagger_{ms} = \sum_{s} (V^\dagger V)_{s n} \ot_B V^\dagger_{ms} \nn \\
& = \sum_{s} 1 \ot (\Psi \cdot_{op} \Psi^\dagger)_{s n}\ot_B V^\dagger_{ms} = \sum_{s} 1 \ot 1 \ot_B (\Psi \cdot_{op} \Psi^\dagger)_{s n} V^\dagger_{ms} 
 \nn \\
& = \sum_{s, j, k} 1 \ot 1 \ot_B \, \Psi^\dagger_{j n} \Psi_{s j} \Psi^\dagger_{ks} \ot \Psi_{mk} \nn \\
& = \sum_{s, j,k} 1 \ot 1 \ot_B \, \Psi^\dagger_{j n} (\Psi^\dagger_{ks}  \cdot_{op} \Psi_{s j} ) \ot \Psi_{mk} \nn \\
& = \sum_{j,k} 1 \ot 1 \ot_B \, \Psi^\dagger_{j n} ( \delta_{k j} 1 ) \ot \Psi_{mk} 
= \sum_{k} 1 \ot 1 \ot_B \, \Psi^\dagger_{k n} \ot \Psi_{mk} \nn \\
& = 1 \ot 1 \ot_B V^\dagger_{mn}  
= 1 \ot 1 \ot_B S_\C(V)_{mn} = 1 \ot 1 \ot_B S_\C(h) .
\end{align}
Here we crossed elements of $(\Psi \cdot_{op} \Psi^\dagger)$ 
over the $B$-tensor product,  since they are in $B$, and the relation 
$ \sum_{s} \Psi^\dagger_{ks}  \cdot_{op} \Psi_{s j} = \delta_{k j} 1$.

\subsubsection{Generators and relations}
In term of generators and relations let us write
\begin{align}\label{4matrices}
V = \begin{pmatrix}
P_1 & Q_2 \\
Q_1 & P_2
\end{pmatrix}  \qquad \textup{with} \qquad
P_1 &= \begin{pmatrix}
Z_0 & - \wt{X}_0 \\
X_0 & \wt{Z}_0
\end{pmatrix}, \quad 
Q_2 = \begin{pmatrix}
Z_2 & - \wt{W}_2 \\
W_2 & \wt{Z}_2
\end{pmatrix}, \nn \\
Q_1 & = \begin{pmatrix}
Z_1 & - \wt{W}_1 \\
W_1 & \wt{Z}_1
\end{pmatrix}, \quad
P_2 = \begin{pmatrix}
W_0 & - \wt{Y}_0 \\
Y_0 & \wt{W}_0
\end{pmatrix} . 
\end{align}
An explicit computation leads to
\begin{align} \label{subtens}
Z_0 & = \psi_1 \ot \psi^*_1 + \psi^*_2 \ot \psi_2 , \qquad \wt{Z}_0  = \psi^*_1 \ot \psi_1 + \psi_2 \ot \psi^*_2  
= Z^*_0 , \nn \\ 
X_0 , & = \psi_2 \ot \psi^*_1 - \psi^*_1 \ot \psi_2 , \qquad \wt{X}_0 = \psi^*_2 \ot \psi_1 - \psi_1 \ot \psi^*_2 
= X^*_0 \nn \\
W_0 & =  \psi_3 \ot \psi^*_3 + \psi^*_4 \ot \psi_4 , \qquad \wt{W}_0 = \psi^*_3 \ot \psi_3 + \psi_4 \ot \psi^*_4 
= W^*_0 , \nn \\ 
Y_0 & = \psi_4 \ot \psi^*_3 - \psi^*_3 \ot \psi_4 \qquad \wt{Y}_0 = \psi^*_4 \ot \psi_3 - \psi_3 \ot \psi^*_4 
= Y^*_0,  \nn \\ 
Z_1 & =  \psi_3 \ot \psi^*_1 + \psi_4^* \ot \psi_2 , \qquad \wt{Z}_1 = \psi^*_3 \ot \psi_1 + \psi_4 \ot \psi^*_2 
= Z^*_1,  \nn \\
W_1 & = \psi_4 \ot \psi^*_1 - \psi^*_3 \ot \psi_2 , \qquad \wt{W}_1 = \psi^*_4 \ot \psi_1 - \psi_3 \ot \psi^*_2 
= W^*_1,  \nn \\
Z_2 &=  \psi_1 \ot \psi^*_3 + \psi^*_2 \ot \psi_4 , \qquad \wt{Z}_2 = \psi_2 \ot \psi^*_4 + \psi^*_1 \ot \psi_3 
= Z^*_2,  \nn \\
W_2 & = \psi_2 \ot \psi^*_3 - \psi^*_1 \ot \psi_4 , \qquad \wt{W}_2 = \psi^*_2 \ot \psi_3 - \psi_1 \ot \psi^*_4 
= W^*_2. 
\end{align}
It is then immediate to check that 
$$
S_\C(V) = V^\dagger .
$$
We know that the generators are not independent. Indeed:
\begin{lem} There are four sphere relations:
\begin{align}
\wt{Z}_0 Z_0 + \wt{X}_0 X_0 &= Z_0 \wt{Z}_0 + X_0 \wt{X}_0 = \zeta_0 \ot \zeta_0 , \nn \\
\wt{W}_0 W_0 + \wt{Y}_0 Y_0  &= W_0 \wt{W}_0 + Y_0 \wt{Y}_0 = (1-\zeta_0) \ot (1-\zeta_0) , \nn \\
\wt{Z}_1 Z_1 + \wt{W}_1 W_1 &= Z_1 \wt{Z}_1 + W_1 \wt{W}_1 = (1-\zeta_0) \ot \zeta_0 , \nn \\
\wt{Z}_2 Z_2 + \wt{W}_2 W_2 &= Z_1 \wt{Z}_1 + W_1 \wt{W}_1 = \zeta_0 \ot (1-\zeta_0) .
\end{align}
\begin{proof}
One computes these from the relations \eqref{alter}.
\end{proof} 
\end{lem}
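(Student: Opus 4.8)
I would deduce the four identities from the block form \eqref{4matrices} of $V$, with \eqref{alter} entering only through its restriction to the $2\times2$ sub-blocks of $\Psi$. First note that the componentwise conjugation $(a\ot\tilde a)^{*}=a^{*}\ot\tilde a^{*}$ is an anti-automorphism of $\C(A,H)$ for the product $\bullet$, and that by \eqref{subtens} it interchanges $Z_{i}\leftrightarrow\wt{Z}_{i}$, $W_{i}\leftrightarrow\wt{W}_{i}$, $X_{0}\leftrightarrow\wt{X}_{0}$ and $Y_{0}\leftrightarrow\wt{Y}_{0}$. Writing $M^{\sharp}$ for the $*$-conjugate transpose of a $2\times2$ block $M$ with entries in $\C(A,H)$, inspection of \eqref{4matrices} shows that the eight left-hand sides in the Lemma are diagonal entries of the products $M^{\sharp}\bullet M$ and $M\bullet M^{\sharp}$, as $M$ runs through $P_{1},P_{2},Q_{1},Q_{2}$; since the products below turn out to be scalar multiples of $\II_{2}$, the two expressions in each line automatically agree, using also $\lambda'_{12}=\lambda'_{34}=1$ to identify $X_{0}\bullet\wt{X}_{0}$ with $\wt{X}_{0}\bullet X_{0}$ and its analogues.

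Next I would split $\Psi$ into its two $2\times2$ row-blocks $\Psi_{1}$ (rows $1,2$) and $\Psi_{2}$ (rows $3,4$), so that $V=\Psi\pot\Psi^{\dagger}$ takes exactly the form \eqref{4matrices} with $P_{1}=\Psi_{1}\pot\Psi_{1}^{\dagger}$, $P_{2}=\Psi_{2}\pot\Psi_{2}^{\dagger}$, $Q_{1}=\Psi_{2}\pot\Psi_{1}^{\dagger}$ and $Q_{2}=\Psi_{1}\pot\Psi_{2}^{\dagger}$. A direct check with the commutation relations \eqref{s7t}, in which $\lambda'_{12}=\lambda'_{34}=1$, and the definition \eqref{sub} of $\zeta_{0}$ gives
\[
\Psi_{1}^{\dagger}\Psi_{1}=\Psi_{1}\Psi_{1}^{\dagger}=\Psi_{1}^{\dagger}\cdot_{op}\Psi_{1}=\Psi_{1}\cdot_{op}\Psi_{1}^{\dagger}=\zeta_{0}\,\II_{2},\qquad \Psi_{2}^{\dagger}\Psi_{2}=\dots=(1-\zeta_{0})\,\II_{2};
\]
equivalently, these are the relevant $2\times2$ diagonal blocks of the identities behind \eqref{alter}, namely $\Psi^{\dagger}\Psi=\Psi^{\dagger}\cdot_{op}\Psi=\II_{2}$ and $p=\Psi\cdot\Psi^{\dagger}$, $q=\Psi\cdot_{op}\Psi^{\dagger}$ of \eqref{proj}.

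A one-line computation with the product rule $(a\ot\tilde a)\bullet(a'\ot\tilde a')=aa'\ot\tilde a'\tilde a$ then shows that, for a block $M=\Xi\pot\Phi^{\dagger}$ with $\Xi,\Phi\in\{\Psi_{1},\Psi_{2}\}$, once the diagonal blocks $\Xi^{\dagger}\Xi$, $\Xi\Xi^{\dagger}$, $\Phi\cdot_{op}\Phi^{\dagger}$ and $\Phi^{\dagger}\cdot_{op}\Phi$ are scalar---say $c_{\Xi}\,\II_{2}$ and $c_{\Phi}\,\II_{2}$ with $c_{\Psi_{1}}=\zeta_{0}$, $c_{\Psi_{2}}=1-\zeta_{0}$---one has $M^{\sharp}\bullet M=M\bullet M^{\sharp}=(c_{\Xi}\ot c_{\Phi})\,\II_{2}$. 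Reading off a diagonal entry for $M=P_{1},P_{2},Q_{1},Q_{2}$, i.e.\ for $(\Xi,\Phi)=(\Psi_{1},\Psi_{1})$, $(\Psi_{2},\Psi_{2})$, $(\Psi_{2},\Psi_{1})$, $(\Psi_{1},\Psi_{2})$, and comparing with \eqref{4matrices} and the $*$-identifications above, one obtains exactly the four lines of the Lemma.

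I expect the only real difficulty to be bookkeeping. Two points deserve care: the antipode $S_{\C}$ (the entrywise flip) agrees with the $*$-conjugate transpose on the blocks $P_{1},P_{2}$ but \emph{not} on $Q_{1},Q_{2}$, so it is $M^{\sharp}$, and not $S_{\C}(M)$, that organises all four relations uniformly; and the cross-block products $\Psi_{1}^{\dagger}\Psi_{2}$ are not scalar, which is why one routes the argument through the block products $M^{\sharp}\bullet M$---where the scalar $\Xi^{\dagger}\Xi$ can be factored out of the first tensor leg---rather than through the single $4\times4$ identity $V^{\dagger}V=1\ot q$ alone. One should also note that the resulting scalars lie in $B\ot B\subseteq\C(A,H)$, which is clear since $\zeta_{0}\in B$.
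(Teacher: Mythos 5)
Your proof is correct, and in substance it is the computation that the paper's one-line proof (``one computes these from the relations \eqref{alter}'') is pointing at --- but you make explicit a step the paper leaves silent. The relations \eqref{alter} as literally stated are $4\times 4$ identities whose diagonal entries are \emph{four}-term sums (these are exactly the source and target identities of Lemma \ref{source-target}); the two-term sphere relations do not follow from them alone, and in fact imply them. What makes the argument work is precisely your reduction to the $2\times 2$ row-blocks $\Psi_1,\Psi_2$ of $\Psi$, for which $\Psi_i^\dagger\Psi_i$, $\Psi_i\Psi_i^\dagger$ and their $\cdot_{op}$ analogues are the scalars $\zeta_0\,\II_2$ and $(1-\zeta_0)\,\II_2$ (this uses $\lambda'_{12}=\lambda'_{34}=1$ and normality of the $\psi_a$), together with your observation that for $M=\Xi\pot\Phi^\dagger$ one then gets $M^{\sharp}\bullet M=M\bullet M^{\sharp}=(c_\Xi\ot c_\Phi)\,\II_2$; reading off the diagonal entries for $M=P_1,P_2,Q_1,Q_2$ gives all four lines, with the correct right-hand sides $\zeta_0\ot\zeta_0$, $(1-\zeta_0)\ot(1-\zeta_0)$, $(1-\zeta_0)\ot\zeta_0$, $\zeta_0\ot(1-\zeta_0)$. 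Two small remarks: the second expression in each line is directly the $(2,2)$ entry of $M^{\sharp}\bullet M$, so the auxiliary identification $X_0\bullet\wt{X}_0=\wt{X}_0\bullet X_0$ (and its analogues), while correct, is not actually needed; and your computation shows that the middle term in the fourth displayed line of the statement is a typo and should read $Z_2\wt{Z}_2+W_2\wt{W}_2$.
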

In a sense this says that the four matrices in \eqref{4matrices} are all equivalent and for the generators of the $B$-bimodule 
$\C(A,H)$ of coinvariant elements one can take any one of those together with $A(\St^4) \ot 1$ and $1 \ot A(\St^4)$.
Alternatively, one could express the generators of the latter spheres in terms of the generators in \eqref{subtens}.
\begin{lem}\label{source-target}
The source map:
\begin{align}
& \wt{Z}_0 Z_0 + \wt{X}_0 X_0 + \wt{Z}_2 Z_2 + \wt{W}_2 W_2 
=\zeta_0 \ot 1 , \nn \\
& Z_0 \wt{Z}_1 + \wt{X}_0 W_2 + Z_2 \wt{W}_0 + \wt{W}_2 Y_0 
= \zeta_1 \ot 1 , \nn \\
& X_0 \wt{Z}_1 + W_2 \wt{W}_0 - \wt{Z}_0 W_1 - \wt{Z}_2 Y_0 
= \zeta_2 \ot 1 .
\end{align}
and the target map:
\begin{align}
& \wt{Z}_0 Z_0 + \wt{X}_0 X_0 + \wt{Z}_1 Z_1 + \wt{W}_1 W_1 = 1 \ot \zeta_0, \nn \\
& W_2 \wt{X}_0 + Z_2 \wt{Z}_0 + Y_0 \wt{W}_1 + W_0 \wt{Z}_1 = 1\ot \zeta_1, \nn \\
& W_2 Z_0 - Z_2 X_0 + Y_0 Z_1 - W_0 W_1 = 1 \ot \zeta_2 .
\end{align}

\begin{proof}
The direct way for these is just to use again the relations in \eqref{alter}.
\end{proof} 
\end{lem}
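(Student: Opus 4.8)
The plan is to obtain the six identities as components of the two matrix relations
$V V^\dagger = p \ot 1_A$ and $V^\dagger V = 1_A \ot q$ already established in \eqref{alter}.
Recall from Definition~\ref{def:reb} that the source and target maps of the bialgebroid $\C(A,H)$
are $s(b) = b \ot 1_A$ and $t(b) = 1_A \ot b$; hence $s(A(\St^4)) = A(\St^4) \ot 1_A$ is spanned
by the entries of $p \ot 1_A = V V^\dagger$, while $t(A(\St^4)) = 1_A \ot A(\St^4)$ is spanned by
the entries of $1_A \ot q = V^\dagger V$. Equivalently $(V V^\dagger)_{mn} = s(p_{mn})$ and
$(V^\dagger V)_{mn} = t(q_{mn})$ for all $m,n$, with $p_{mn}$ the entries in \eqref{proj} and
$q_{mn}$ those of the opposite-algebra projection of \S\ref{sec:HG-SU2}. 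The whole task is then to
expand these two matrix products in components.

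First I would write $V$ in the block form \eqref{4matrices} and record the entries of $V^\dagger$: by
\eqref{antipode1case} one has $(V^\dagger)_{mn} = \sigma(V_{mn})$, and the flip $\sigma$ merely permutes
the bilinear generators of \eqref{subtens} among themselves up to signs (for instance it interchanges
$Z_0 \leftrightarrow \wt Z_0$ and $W_0 \leftrightarrow \wt W_0$, sends $X_0 \mapsto - X_0$,
$Y_0 \mapsto - Y_0$, $W_1 \mapsto - W_2$, $Z_1 \mapsto \wt Z_2$, and so on), as one reads off at once
from the explicit formulas \eqref{subtens}. Expanding the matrix products $V \bullet V^\dagger$ and
$V^\dagger \bullet V$ entry by entry over $\C(A,H)$ --- using only the bialgebroid product
$(x \ot \tilde x)\bullet(y \ot \tilde y) = xy \ot \tilde y \tilde x$ --- and substituting \eqref{subtens},
each $(V V^\dagger)_{mn}$ and $(V^\dagger V)_{mn}$ becomes a sum of $\bullet$-products of the generators
$Z_i, \wt Z_i, X_0, \wt X_0, W_i, \wt W_i, Y_0, \wt Y_0$. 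Selecting the indices $(m,n)$ for which
$p_{mn}$, respectively $q_{mn}$, equals (up to sign) one of the three independent generators
$\zeta_0, \zeta_1, \zeta_2$ of $A(\St^4)$ --- for instance $p_{11} = \zeta_0$, $p_{13} = \zeta_1$,
$p_{23} = \zeta_2$ for the source map and $q_{11} = \zeta_0$, $q_{42} = \zeta_1$, $q_{41} = - \zeta_2$
for the target map --- yields exactly the stated formulas; the other entries of $p$ and $q$ carry the
phases $\mu, \bar\mu$ and give equivalent relations, which are not needed. For the two
$\zeta_0$-identities one may bypass the matrix expansion altogether by simply adding two of the four
sphere relations of the preceding lemma, since $\zeta_0 \ot \zeta_0 + \zeta_0 \ot (1-\zeta_0) = \zeta_0 \ot 1$
and $\zeta_0 \ot \zeta_0 + (1-\zeta_0) \ot \zeta_0 = 1 \ot \zeta_0$.

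The part that requires care is bookkeeping rather than any real difficulty: one must keep track of the
opposite multiplication in the second tensor leg when multiplying matrices over $\C(A,H)$, and follow
carefully how $\sigma$ acts on the bilinear generators \eqref{subtens}. Once this is handled the
computation collapses, via the commutation relations \eqref{s7t} and the sphere relation
$\sum_a \psi_a^* \psi_a = 1$ of $A(\Sk^7)$, to exactly the facts already used to prove \eqref{alter} and
$\Psi^\dagger \Psi = \Psi^\dagger \cdot_{op} \Psi = \II_2$. Note in particular that the commutativity of
$\psi_1, \psi_2$ with one another and with their adjoints, forced by the choice \eqref{lambda7}, makes
several of the relevant $\bullet$-products commute, which is why the terms of the expansions come out
precisely in the order displayed in the statement.
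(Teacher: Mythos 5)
Your proposal is correct and follows exactly the route the paper intends: the six identities are read off as the $(1,1)$, $(1,3)$, $(2,3)$ entries of $V V^\dagger = p \ot 1$ and the $(1,1)$, $(4,2)$, $(4,1)$ entries of $V^\dagger V = 1 \ot q$ from \eqref{alter}, after tracking how the flip permutes the generators \eqref{subtens}. The extra observation that the two $\zeta_0$-rows also follow by adding pairs of sphere relations is a pleasant shortcut but not a different method.
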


\subsection{A Hopf algebroid with quantum orthogonal symmetry} \label{sec:ev-q-shp} 
Let us denote $A=\O(\SOt(2n+1))$, $H=\O(\SOt(2n))$ and $B=\O(S_\theta^{2n})$. 
With the notations of \S\ref{sec:-q-shp} consider the matrix valued function
\beq
\Phi = (\Phi_{J K}) =
\begin{pmatrix} 
\b{a} & \b{b} \\ 
\b{b}^* & \b{a}^* \\
\b{v} & \b{v}^* 
\end{pmatrix} .
\eeq

Then, the orthogonality conditions $N^\dagger N = \II$ gives that $\Phi^\dagger \cdot \Phi = \II_{2n}$.
Moreover, the entries of the matrix $\Phi \cdot \Phi^\dagger$ 
(a projection from the condition  $\Phi^\dagger \cdot \Phi = \II_{2n}$) are coinvariants 
for the coaction \eqref{subha}.
In fact, one computes explicitly that
\beq\Phi \cdot \Phi^\dagger = 
\begin{pmatrix}
1 - \b{u}  \b{u}^\dagger & - \b{u} \b{u}^t & - \b{u} x \\ 
 - \b{u}^* \b{u}^\dagger & 1 - \b{u}^* \b{u}^t & - \b{u}^* x \\
- x \b{u}^\dagger & - x \b{u}^t & 1 - x^2 
\end{pmatrix}
\eeq
Let us denote
\beq
\b{w}=\begin{pmatrix}
{\bf h} & {\bf k}  \\
{\bf k}^* & {\bf h}^* 
\end{pmatrix}, 
\eeq
the defining matrix of $\O(\SOt(2n))$, with $\b{w}^\dagger \b{w} = \b{w} \b{w}^\dagger = \II_2$. 
Then the coaction \eqref{subha} reduces to a coaction
\beq\label{cored}
\delta^A (\Phi) = \Phi \pot \b{w} 
\eeq
or 
\beq\label{cored-bis}
\begin{matrix}
\delta^A (\b{a}) = \b{a} \pot \b{h} + \b{b} \pot \b{k}^* \\
\delta^A (\b{b}) = \b{a} \pot \b{k} + \b{b} \pot \b{h}^* \\
\delta^A (\b{v}) = \b{v} \pot \b{h} + \b{v}^* \pot \b{k}^* 
\end{matrix} \qquad \mbox{and} \qquad 
\begin{matrix}
\delta^A (\b{a}^*) = \b{a}^* \pot \b{h}^* + \b{b}^* \pot \b{k} \\
\delta^A (\b{b}^*) = \b{a}^* \pot \b{k}^* + \b{b}^* \pot \b{h} \\
\delta^A (\b{v}^*) = \b{v}^* \pot \b{h}^* + \b{v} \pot \b{k} \, . 
\end{matrix}
\eeq
In turns this gives 
\beq
\delta^A (\Phi^\dagger) = \sigma(\b{w}^\dagger \pot \Phi^\dagger).
\eeq
The translation map is easily seen to be given by
\beq \label{tramap2}
\tau(\b{w}) = \Phi^\dagger \pot_B \Phi .   
\eeq

One could show the coinvariance of the entries of $\Phi \cdot \Phi^\dagger$ by using the explicit form \eqref{cored-bis} of the coaction. 
Then in exactly the same way, one shows the following: 
\begin{lem}
The $B$-bimodule $\C(A,H)$ of coinvariant elements for the diagonal coaction of $H$ on $A \ot A$ is generated 
by elements $1\ot \b{u}, 1\ot \b{u}^*, 1 \ot x$, and $\b{u} \ot 1, \b{u}^* \ot 1, x \ot 1$, 
together with the entries of the matrix
$$ 
\b{V} = \Phi \pot \Phi^\dagger =
\begin{pmatrix} 
\b{a} & \b{b} \\ 
\b{b}^* & \b{a}^* \\
\b{v} & \b{v}^* 
\end{pmatrix} \pot 
\begin{pmatrix} 
\b{a} & \b{b} \\ 
\b{b}^* & \b{a}^* \\
\b{v} & \b{v}^* 
\end{pmatrix}^\dagger.
$$
Moreover, the flip $S_\C(x \ot y) = y \ot x$ leaves unchanged the space of coinvariants, and in particular
$$
S_\C (\b{V}) = \sigma(\Phi \pot \Phi^\dagger) = \b{V}^\dagger .
$$
\end{lem}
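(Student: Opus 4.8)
The plan is to imitate the $\SU(2)$-case of \S\ref{se:alg-su2sym} in three steps: first check that the listed elements are coinvariant, then that they generate $\C(A,H)$ as a $B$-bimodule, and finally that the flip preserves $\C(A,H)$ and acts on $\b{V}$ as stated. For the first step, since $B=A^{coH}$ the coaction $\delta^A$ is trivial on $\b{u},\b{u}^*,x$, so each of $\b{u}\ot 1,\b{u}^*\ot 1,x\ot 1$ and $1\ot\b{u},1\ot\b{u}^*,1\ot x$ is coinvariant for the diagonal coaction $\delta^{A\ot A}$; and for $\b{V}=\Phi\pot\Phi^\dagger$ I would feed $\delta^A(\Phi)=\Phi\pot\b{w}$ together with the derived identity $\delta^A(\Phi^\dagger)=\sigma(\b{w}^\dagger\pot\Phi^\dagger)$ into $\delta^{A\ot A}$ and use the orthogonality $\b{w}\b{w}^\dagger=\II$ to collapse the $H$-factor, getting $\delta^{A\ot A}(\b{V})=\Phi\pot\Phi^\dagger\pot(\b{w}\b{w}^\dagger)=\b{V}\pot\II$, the exact analogue of \eqref{once}; hence the entries of $\b{V}$ are coinvariant.

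The generation step is the one I expect to be the main obstacle. The algebra $A=\O(\SOt(2n+1))$ is generated over $B$ by the entries of $\Phi$, i.e.\ by the first $2n$ columns of $N$, and under $\delta^A$ the right index of each such generator transforms in the fundamental corepresentation $\b{w}$ of $H$ while the left index is inert. So a coinvariant of the diagonal coaction is obtained by contracting these right indices against $H$-invariant tensors of $\SOt(2n)$, which are generated by the bilinear form $Q$: a contraction within one tensor leg lands in $B$ by the orthogonality relations, hence in $B\ot 1$ or $1\ot B$, while a contraction of a first-leg index against a second-leg one produces an entry of $\b{V}$. Carrying this reduction out -- the $\theta$-phases only rescale monomials, by Lemma \ref{lem:mtheta-m}, and so do not change which monomials arise -- should express every coinvariant through the claimed generators, the four sphere relations and the identities $\b{V}\bullet\b{V}^\dagger\in B\ot 1$, $\b{V}^\dagger\bullet\b{V}\in 1\ot B$ (coming from $\Phi^\dagger\cdot\Phi=\II_{2n}$, in parallel with \eqref{alter}) closing the list. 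A shorter route is to invoke Lemma \ref{lem:ES-ald}, by which $\C(A,H)$ coincides, as a vector space, with the gauge coring of the classical $\SO(2n)$-bundle $\SO(2n+1)\to S^{2n}$, where the generation statement is classical.

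For the flip: $S_\C$ interchanges $\b{u}\ot 1\leftrightarrow 1\ot\b{u}$ (and likewise for $\b{u}^*,x$) and sends $\b{V}=\Phi\pot\Phi^\dagger$ to $\sigma(\Phi\pot\Phi^\dagger)$, which on unwinding the $\dagger$ on matrices over $A\ot A^{\mathrm{op}}$ is precisely $\b{V}^\dagger$, exactly as in \eqref{antipode1case}. Repeating the computation of the first step with $\b{w}\b{w}^\dagger=\II$ gives $\delta^{A\ot A}(\b{V}^\dagger)=\b{V}^\dagger\pot\II$, so $\b{V}^\dagger$ is coinvariant as well; since $S_\C$ is an algebra anti-automorphism of $\C(A,H)\subset A\ot A^{\mathrm{op}}$ interchanging the source and target maps and it carries a $B$-bimodule generating set into $\C(A,H)$, it preserves all of $\C(A,H)$. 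This is the ``explicit'' instance promised in \S\ref{subsec:flip-atpd}: as $H=\O(\SOt(2n))$ is a $\theta$-deformation of the commutative $\O(\SO(2n))$, the argument there that the flip preserves coinvariants in the commutative case transports to the present setting through Lemma \ref{lem:ES-ald}.
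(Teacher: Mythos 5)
Your proof is correct and follows essentially the same route as the paper: the coinvariance of $\b{V}$ and $\b{V}^\dagger$ is established by exactly the computation $\delta^{A\ot A}(\b{V})=\zero{\Phi}\pot\zero{\Phi^\dagger}\pot\one{\Phi}\one{\Phi^\dagger}=\b{V}\pot(\b{w}\b{w}^\dagger)=\b{V}\pot\II_{2n}$ (and its adjoint version for $\b{V}^\dagger$) that the paper itself uses. The paper's printed proof stops there and leaves the generation claim implicit, so your additional sketch of the reduction to the classical $\SO(2n)$-bundle via Lemma \ref{lem:ES-ald} goes beyond, rather than against, the published argument.
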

\begin{proof}
For $\b{V} = \Phi \pot \Phi^\dagger$:
$$
\delta^{A \ot  A}(\b{V}) = \zero{\Phi} \pot \zero{\Phi^\dagger} \pot \one{\Phi} \one{\Phi^\dagger}
= \Phi \pot \Phi^\dagger \pot (\b{w} \b{w}^\dagger)= \b{V} \pot (\b{w} \b{w}^\dagger) = \b{V} \pot \II_{2n} 
$$
and directly: 
$\delta^{A \ot  A}(\b{V}^\dagger) = \sigma(\zero{\Phi} \pot \zero{\Phi^\dagger}) \pot \one{\Phi^\dagger} \one{\Phi}
= \sigma(\Phi \pot \Phi^\dagger) \pot (\b{w}^\dagger \b{w}) = \b{V^\dagger} \pot \II_{2n}
$. 
\end{proof}

\begin{lem} Using the conditions $N^\dagger N = \II = N N^\dagger$ one finds the following relations
\begin{align}
\Phi^\dagger \cdot_{op} \Phi = 
\begin{pmatrix}
(\Phi^\dagger \cdot \Phi )_{22} & (\Phi^\dagger \cdot \Phi )_{12} \\ 
(\Phi^\dagger \cdot \Phi )_{21} & (\Phi^\dagger \cdot \Phi )_{11} 
\end{pmatrix} =
Q (\Phi^\dagger \cdot \Phi)^t Q = 
\begin{pmatrix}
\II_n & 0 \\ 
0 & \II_n 
\end{pmatrix} = \II_{2n} 
\end{align}
\begin{align}
\Phi \cdot_{op} \Phi^\dagger =
\begin{pmatrix}
1 - \b{u}^* \b{u}^t & - \b{u} \b{u}^t & - \b{u}^* x\\ 
 - \b{u}^* \b{u}^\dagger & 1 - \b{u} \b{u}^\dagger & - \b{u} x \\
- x \b{u}^t & - x \b{u}^\dagger & 1 - x^2 
\end{pmatrix} =
Q (\Phi \cdot \Phi^\dagger)^t Q
\end{align}
It is evident that $(\Phi \cdot \Phi^\dagger)_{J K} \in B$; as well as $(\Phi \cdot_{op} \Phi^\dagger)_{J K} \in B$. 
Also,
\beq
\b{V} \b{V}^\dagger =  (\Phi \cdot \Phi^\dagger) \ot 1 , \qquad \b{V}^\dagger \b{V}  = 1 \ot (\Phi \cdot_{op} \Phi^\dagger),
\eeq
which express relations among the generators of $\C(A,H)$.
\end{lem}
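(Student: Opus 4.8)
The plan is to deduce everything from the $*$-structure $*N = QNQ$ and the orthogonality relations $N^{t}QN = Q = NQN^{t}$ of $A = \O(\SOt(2n+1))$, together with the fact that $\Phi$ is precisely the submatrix of $N$ formed by its first $2n$ columns (so that $\Phi^{\dagger}\cdot\Phi = \II_{2n}$ and the explicit form of $\Phi\cdot\Phi^{\dagger}$ are already known). Write $\overline{X}$ for the matrix obtained by applying $*$ entrywise to a matrix $X$ over $A$, so that $X^{\dagger} = \overline{X}^{\,t}$. Then $*N = QNQ$ restricts, on the first $2n$ columns, to $\overline{\Phi} = Q\Phi Q$, hence $\Phi^{\dagger} = Q\Phi^{t}Q$, equivalently $\Phi^{t}Q = Q\Phi^{\dagger}$; here and throughout the various $Q$ have size $2n$ or $2n+1$ as required and coincide with $\bigl(\begin{smallmatrix}0&\II_{n}\\\II_{n}&0\end{smallmatrix}\bigr)$ on their first two block rows and columns.

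The computation is driven by one elementary observation: applying $*$ entrywise turns one matrix product into the other, $\overline{M\cdot_{op}N} = \overline{M}\cdot_{A}\overline{N}$, which is immediate from $(ab)^{*} = b^{*}a^{*}$; the same identity gives $\overline{\Phi^{\dagger}\cdot\Phi} = (\Phi^{\dagger}\cdot\Phi)^{t}$, $\overline{\Phi\cdot\Phi^{\dagger}} = (\Phi\cdot\Phi^{\dagger})^{t}$, and $\overline{\Phi^{\dagger}} = \Phi^{t}$. Combining these with $\overline{\Phi} = Q\Phi Q$ one gets $\overline{\Phi^{\dagger}\cdot_{op}\Phi} = \Phi^{t}\cdot_{A}(Q\Phi Q) = (Q\Phi^{\dagger})\Phi\,Q = Q(\Phi^{\dagger}\cdot\Phi)Q = \overline{Q(\Phi^{\dagger}\cdot\Phi)^{t}Q}$, and similarly $\overline{\Phi\cdot_{op}\Phi^{\dagger}} = (Q\Phi Q)\cdot_{A}\Phi^{t} = Q\Phi(\Phi^{\dagger}Q) = Q(\Phi\cdot\Phi^{\dagger})Q = \overline{Q(\Phi\cdot\Phi^{\dagger})^{t}Q}$. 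Since entrywise $*$ is involutive, this establishes the relations $\Phi^{\dagger}\cdot_{op}\Phi = Q(\Phi^{\dagger}\cdot\Phi)^{t}Q$ and $\Phi\cdot_{op}\Phi^{\dagger} = Q(\Phi\cdot\Phi^{\dagger})^{t}Q$. Invoking now the orthogonality $\Phi^{\dagger}\cdot\Phi = \II_{2n}$ gives $\Phi^{\dagger}\cdot_{op}\Phi = Q\,\II_{2n}\,Q = \II_{2n}$, while substituting the known matrix for $\Phi\cdot\Phi^{\dagger}$ into $Q(\Phi\cdot\Phi^{\dagger})^{t}Q$ — the conjugation by $Q$ swapping the first two block rows and columns — and simplifying with the commutation relations of $B = \O(S_\theta^{2n})$ and the normality $u_{j}^{*}u_{j} = u_{j}u_{j}^{*}$, produces the matrix displayed in the statement; in particular its entries are again polynomials in $u_{j}, u_{j}^{*}, x$, hence lie in $B$.

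The two identities for $\b{V}$ follow by a direct computation inside $\C(A,H)$ with the product $\bullet$ of Definition~\ref{def:reb}, using $\b{V} = \Phi\pot\Phi^{\dagger}$ and $\b{V}^{\dagger} = S_{\C}(\b{V}) = \sigma(\Phi\pot\Phi^{\dagger})$. Expanding $(\b{V}\bullet\b{V}^{\dagger})_{JK} = \sum_{M}\b{V}_{JM}\bullet(\b{V}^{\dagger})_{MK}$ produces an inner sum $\sum_{M}\Phi^{\dagger}_{LM}\cdot_{op}\Phi_{ML'} = (\Phi^{\dagger}\cdot_{op}\Phi)_{LL'} = \delta_{LL'}$, which collapses the expression to $(\Phi\cdot\Phi^{\dagger})_{JK}\ot 1$; symmetrically $(\b{V}^{\dagger}\bullet\b{V})_{JK}$ carries the inner sum $\sum_{M}\Phi^{\dagger}_{LM}\cdot\Phi_{ML'} = (\Phi^{\dagger}\cdot\Phi)_{LL'} = \delta_{LL'}$ and collapses to $1\ot(\Phi\cdot_{op}\Phi^{\dagger})_{JK}$. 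Since $\b{V}^{\dagger}\b{V}$ lies in $\C(A,H) = (A\ot A)^{coH}$, coinvariance of $1\ot(\Phi\cdot_{op}\Phi^{\dagger})_{JK}$ also forces $(\Phi\cdot_{op}\Phi^{\dagger})_{JK}\in A^{coH} = B$, a second route to that membership.

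I expect the main obstacle to be purely organisational: keeping the matrices $Q$ of sizes $2n$ and $2n+1$ straight under the restriction ``$\Phi = $ first $2n$ columns of $N$'', and remembering that entrywise $*$ reverses the order of entry products, so that $\cdot_{A}$ and $\cdot_{op}$ genuinely get interchanged. Nothing here is deep; the only genuinely computational point, the explicit form of $\Phi\cdot_{op}\Phi^{\dagger}$, is routine once $Q(\Phi\cdot\Phi^{\dagger})^{t}Q$ is expanded.
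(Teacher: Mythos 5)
Your argument is correct, and it supplies a proof where the paper gives none (the lemma is stated with only the hint ``using $N^\dagger N=\II=NN^\dagger$''; for the analogous $\SU(2)$ statement the paper likewise just appeals to the commutation relations). Your key identity $\overline{M\cdot_{op}N}=\overline{M}\cdot\overline{N}$, combined with $\overline{\Phi}=Q\Phi Q$ coming from $*N=QNQ$ and $\overline{\Phi^\dagger\cdot\Phi}=(\Phi^\dagger\cdot\Phi)^t$, does yield $\Phi^\dagger\cdot_{op}\Phi=Q(\Phi^\dagger\cdot\Phi)^tQ$ and $\Phi\cdot_{op}\Phi^\dagger=Q(\Phi\cdot\Phi^\dagger)^tQ$ cleanly, and one can confirm it entrywise: $(\Phi\cdot_{op}\Phi^\dagger)_{JK}=\sum_M\Phi^*_{KM}\Phi_{JM}=\sum_M\Phi_{\sigma(K)\sigma(M)}\Phi_{JM}=(\Phi\cdot\Phi^\dagger)_{\sigma(K)\sigma(J)}$, which is exactly conjugation by $Q$ composed with transposition. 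The collapse of $\b{V}\b{V}^\dagger$ and $\b{V}^\dagger\b{V}$ via the inner sums $(\Phi^\dagger\cdot_{op}\Phi)_{MN}=\delta_{MN}$ and $(\Phi^\dagger\cdot\Phi)_{MN}=\delta_{MN}$ in the second, respectively first, tensor leg is also right (your index labels in that paragraph are slightly garbled but the computation they describe is the correct one). The only caveat is cosmetic: the entrywise result is $(\Phi\cdot_{op}\Phi^\dagger)_{JK}=\delta_{JK}-u^*_Ku_J$, etc., so the displayed blocks such as $1-\b{u}^*\b{u}^t$ must be read with the paper's loose ordering convention for products of noncommuting vector entries; this is an imprecision of the statement, not a gap in your derivation.
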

\noindent
In parallel with the projection $\Phi \cdot \Phi^\dagger$, the matrix $\Phi \cdot_{op} \Phi^\dagger$ is a projection due to $\Phi^\dagger \cdot_{op} \Phi = \II_{2n}$.

\medskip
We are ready for the Hopf algebroid structure.
\begin{prop}
On $\C(A,H)$, the coproduct $\Delta : \C(A,H) \to \C(A,H) \ot_B \C(A,H)$, according to the definition \eqref{copro} and using the translation map \eqref{tramap2}, 
is given by
\begin{align} \label{ccopor}
\Delta (\b{V}) = \zero{\Phi} \pot \tuno{\one{\Phi}} \pot _B \tdue{\one{\Phi}} \pot \Phi^\dagger 
= \Phi \pot \Phi^\dagger \pot_B \Phi \pot \Phi^\dagger = \b{V} \pot _B \b{V}. 
\end{align}
In components this reads:
\beq\label{copcom-1}
\Delta(V_{J K}) = \sum_L V_{J L} \ot_B V_{L K}. 
\eeq
Also,
\beq
\Delta (S_\C(\b{V})) = \sigma(S_\C(\b{V}) \pot _B S_\C(\b{V})) , \quad 
\eeq
or 
\beq
\Delta(S_\C(V)_{J K}) = \sum_L S_\C(V)_{L K} \ot_B S_\C(V)_{J L} , \qquad \Delta(V^\dagger_{J K}) = \sum_L V^\dagger_{L K} \ot_B V^\dagger_{J L}.
\eeq
\end{prop}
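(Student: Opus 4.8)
The statement is proved by a direct computation with the matrix $\b{V}$ of generators, entirely parallel to the treatment of the $\SU(2)$-case in \S\ref{se:alg-su2sym}. The plan is to insert into the defining formula \eqref{copro} for the coproduct the explicit coaction \eqref{cored}, $\delta^A(\Phi)=\Phi\pot\b{w}$, together with the translation map \eqref{tramap2}, $\tau(\b{w})=\Phi^\dagger\pot_B\Phi$, and then to regroup the resulting matrix sums: this is precisely the chain of equalities \eqref{ccopor}, which in components reads \eqref{copcom-1}.

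In detail, one writes a generic entry as $\b{V}_{JK}=\sum_s\Phi_{Js}\ot\Phi^\dagger_{sK}$, a sum of terms $a\ot\tilde a$ with $a=\Phi_{Js}$ and $\tilde a=\Phi^\dagger_{sK}$, substitutes $\delta^A(\Phi_{Js})=\sum_t\Phi_{Jt}\ot w_{ts}$ and $\tau(w_{ts})=\sum_u\Phi^\dagger_{tu}\ot_B\Phi_{us}$ into \eqref{copro}, and rearranges:
\begin{align*}
\Delta(\b{V}_{JK})
= \sum_{s}\Delta(\Phi_{Js}\ot\Phi^\dagger_{sK})
= \sum_{s,t,u}\Phi_{Jt}\ot\Phi^\dagger_{tu}\ot_B\Phi_{us}\ot\Phi^\dagger_{sK}
= \sum_{u}\b{V}_{Ju}\ot_B\b{V}_{uK} ,
\end{align*}
recognising $\sum_t\Phi_{Jt}\ot\Phi^\dagger_{tu}=\b{V}_{Ju}$ in the first leg and $\sum_s\Phi_{us}\ot\Phi^\dagger_{sK}=\b{V}_{uK}$ in the second. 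The one point that needs a word of care is that this rearrangement is compatible with the balanced tensor product over $B$: this is legitimate because, by Definition \ref{def:ec}, the coproduct of the Ehresmann--Schauenburg coring corestricts to $\C(A,H)\ot_B\C(A,H)$, the entries $\b{V}_{Ju}$ and $\b{V}_{uK}$ being coinvariant as noted above.

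For $S_\C(\b{V})=\b{V}^\dagger=\sigma(\Phi\pot\Phi^\dagger)$ I would argue in either of two equivalent ways. One can run the same computation with $\Phi$ replaced by $\Phi^\dagger$ and $\b{w}$ by $\b{w}^\dagger=S(\b{w})$, using $\delta^A(\Phi^\dagger)=\sigma(\b{w}^\dagger\pot\Phi^\dagger)$ and the translation map on the entries of $\b{w}^\dagger$; or one applies the flip $\sigma=S_\C$ to the identity $\Delta(\b{V})=\b{V}\pot_B\b{V}$ just obtained, using that $S_\C$ is anti-comultiplicative, $\Delta\circ S_\C=\sigma\circ(S_\C\ot S_\C)\circ\Delta$, exactly as recorded for the $\SU(2)$-case in \eqref{ccops}. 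Either route yields $\Delta(S_\C(\b{V}))=\sigma(S_\C(\b{V})\pot_B S_\C(\b{V}))$, that is $\Delta(V^\dagger_{JK})=\sum_L V^\dagger_{LK}\ot_B V^\dagger_{JL}$ in components.

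Since we work with the deformed algebras $A=\O(\SOt(2n+1))$, $H=\O(\SOt(2n))$ and $B=\O(S_\theta^{2n})$, every product above is a priori the $\theta$-deformed one; but by Proposition \ref{prop:H-G-Theta} and Lemma \ref{lem:ES-ald} the canonical Galois map and the translation map are unchanged as maps of vector spaces under the deformation, and the underlying coring of $\C(A,H)$ is the same one, so the computation is literally identical to the classical case and no phase factors intervene. This observation, rather than any genuine difficulty, is the only subtlety: one keeps track of where the $\theta$-multiplication sits while recognising, via the earlier results, that it does not affect the coproduct formula --- the remaining work being the routine index bookkeeping of the balanced tensor products.
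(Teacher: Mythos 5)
Your proposal is correct and follows essentially the same route as the paper: the paper's argument is precisely the chain of equalities in \eqref{ccopor}, obtained by substituting the coaction \eqref{cored} and the translation map \eqref{tramap2} into the defining formula \eqref{copro} and regrouping the matrix sums, exactly as you do (with the index bookkeeping written out), and the antipode part is handled in parallel with \eqref{ccops}. Your added remarks on the balanced tensor product and on the deformation leaving the coring structure untouched are consistent with Definition \ref{def:ec}, Proposition \ref{prop:H-G-Theta} and Lemma \ref{lem:ES-ald}.
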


Very much in the lines of the proof \eqref{sverif-bis}, we have the following:
\begin{prop}
The flip $S_\C$ is the antipode of $\C(A,H)$. 
\end{prop}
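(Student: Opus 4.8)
The plan is to reduce to an explicit matrix computation mirroring the $\SU(2)$ case \eqref{sverif}--\eqref{sverif-bis}, after first recording the conceptual shortcut. Indeed, by Example \ref{eg:Q-SO} the present pair $(H_\Theta,A_\Theta)=(\O(\SOt(2n)),\O(\SOt(2n+1)))$ is exactly a deformed Hopf--Galois extension of the type treated in \S\ref{subsec:ScII}, so the Theorem of \S\ref{subsec:flip-atpd}, via Lemma \ref{lem:anti-comp-I}, already shows that the flip is an invertible antipode; the role of this Proposition is to see this concretely on generators. So first I would dispatch the easy items: $S_\C$ is an algebra anti-automorphism of $\C(A,H)\subset A\ot A^{op}$ with $S_\C^{-1}=S_\C$, it preserves $\C(A,H)$ by the previous Lemma, and since $t(b)=1\ot b$ and $s(b)=b\ot 1$ the flip gives $S_\C\circ t=s$, i.e. \eqref{hopbroid1}. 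It then remains to verify the two coproduct compatibilities in \eqref{hopbroid2}, and, exactly as in the $\SU(2)$ case, it suffices to test them on the matrix entries $h=V_{JK}$ of $\b V=\Phi\pot\Phi^\dagger$ (the images of source and target being handled by \eqref{hopbroid1} and the anti-multiplicativity of $S_\C$).

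For the second line of \eqref{hopbroid2} I would take $h=V_{JK}$ and insert the coproduct \eqref{copcom-1} together with $\Delta(V^\dagger_{JL})=\sum_M V^\dagger_{ML}\ot_B V^\dagger_{JM}$ from the Proposition above, obtaining
\[
\onet{(S_\C\one{h})}\,\two{h}\ot_B\twot{S_\C(\one{h})}
=\sum_{L,M}V^\dagger_{ML}V_{LK}\ot_B V^\dagger_{JM}
=\sum_M(\b V^\dagger\b V)_{MK}\ot_B V^\dagger_{JM}.
\]
Then I use $\b V^\dagger\b V=1\ot(\Phi\cdot_{op}\Phi^\dagger)$, note each entry $(\Phi\cdot_{op}\Phi^\dagger)_{MK}\in B$ and cross it over $\ot_B$ to act on the left leg of $V^\dagger_{JM}$; expanding $V^\dagger_{JM}=\sum_r(\Phi^\dagger)_{rM}\ot\Phi_{Jr}$ and $(\Phi\cdot_{op}\Phi^\dagger)_{MK}=\sum_p(\Phi^\dagger)_{pK}\Phi_{Mp}$ and then invoking $\Phi^\dagger\cdot_{op}\Phi=\II_{2n}$, that is $\sum_M\Phi_{Mp}(\Phi^\dagger)_{rM}=\delta_{rp}$, collapses the sum to $\sum_r(\Phi^\dagger)_{rK}\ot\Phi_{Jr}=V^\dagger_{JK}$. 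Hence the expression equals $1\ot 1\ot_B S_\C(V_{JK})=1\ot 1\ot_B S_\C(h)$, as required. The first line of \eqref{hopbroid2} follows by the symmetric computation, with $S_\C^{-1}=S_\C$, using $\b V\b V^\dagger=(\Phi\cdot\Phi^\dagger)\ot 1$ and $\Phi^\dagger\cdot\Phi=\II_{2n}$ in place of $\b V^\dagger\b V$ and $\Phi^\dagger\cdot_{op}\Phi=\II_{2n}$; the derived relation $S_\C(\one{h})\two{h}=t\circ\varepsilon\circ S_\C(h)$ is then automatic as recalled in \S\ref{se;pr}.

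The only genuinely delicate point -- the main obstacle -- is keeping straight the two distinct multiplications inside the coring $\C(A,H)\subset A\ot A^{op}$: the left legs multiply in $A$, the right legs in $A^{op}$. This is why at the collapsing step one must use the \emph{opposite}-algebra orthogonality $\Phi^\dagger\cdot_{op}\Phi=\II_{2n}$ (obtained in the preceding Lemma from $N^\dagger N=N N^\dagger=\II$), rather than $\Phi^\dagger\cdot\Phi=\II_{2n}$, and why one has to be careful that the coinvariant entries $(\Phi\cdot_{op}\Phi^\dagger)_{JK}\in B$ are moved across $\ot_B$ acting on the correct tensor leg. Once those conventions are fixed, the remainder is the same formal matrix bookkeeping already carried out in \eqref{sverif}--\eqref{sverif-bis} for the $\SU(2)$ bundle.
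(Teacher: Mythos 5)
Your proof is correct and is essentially the paper's own: the paper likewise notes that \eqref{hopbroid1} is immediate for the flip and verifies \eqref{hopbroid2} on the generators $h=V_{JK}$ by running the component computation \eqref{sverif-bis} verbatim with $\Psi$ replaced by $\Phi$, i.e.\ using $\b{V}^\dagger\b{V}=1\ot(\Phi\cdot_{op}\Phi^\dagger)$, crossing the $B$-valued entries over $\ot_B$, and collapsing via $\Phi^\dagger\cdot_{op}\Phi=\II_{2n}$. One harmless slip in your sketch of the first identity of \eqref{hopbroid2}: the quantity actually arising there is $\sum_L V^\dagger_{LM}\, V_{JL}=(\Phi\cdot_{op}\Phi^\dagger)_{JM}\ot 1$ rather than $(\b{V}\b{V}^\dagger)_{JM}=(\Phi\cdot\Phi^\dagger)_{JM}\ot 1$, but since this again lies in $s(B)$ the rest of the argument is unchanged.
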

\begin{proof}
Since $S_\C$ is just the flip, condition \eqref{hopbroid1} is obvious. For conditions \eqref{hopbroid2} take $h=V_{J K}$, with $S_\C(h) = V^\dagger_{J K}$ and use the explicit form of the coproduct \eqref{copcom-1}. Then the proof proceeds verbatim as in the proof of \eqref{sverif-bis}.
\end{proof}

\appendix
\section{Deforming compact Lie groups along a toral subgroup} \label{sec:Qgrp} 

We recall Rieffel's construction in \cite{Rieffel:1993tw} that deforms 
function algebras of compact Lie groups and make comparison with the algebraic 
setup in \S\ref{subsec:ScII}. 
Let $G$ be a compact Lie group and  $K  \subset G$ be a toral subgroup of rank $n$ 
(for example, but not necessarily, a maximal torus).  Denote by $H = \mathcal O(G)$ the Hopf
algebra  of representative functions on $G$. One would like to only vary the
multiplication of $H$ to get a new Hopf algebra  $H_\Theta$. 
In order to retain the compatibility between the algebra and coalgebra
structures, one must 
cautiously pick the torus action and keep track of the equivariant
properties of the structure maps of $H$. 

A suitable deformation begins with an action $\alpha$ of
$T = K \times K$  on $G$ from two sides:
\begin{align}
\label{eq:alp-Tact} 
\alpha_{ k_1 , k_2 } ( g) = k_1^{-1} g k_2 , \quad 
k_1 , k_2 \in  K , \, \,  g \in  G
\end{align}
which gives rise to a $\IZ^n$-bigrading on $H$ via Pontryagin duality,
and a matrix of deformation like in \eqref{eq:Theta-matrix}:
\beq
\Theta = 
\begin{bmatrix}  \theta   & 0 \\   0 & - \theta   \end{bmatrix}  ,
\eeq
with $\theta$ a $n\times n$ antisymmetric matrix.

Of course, the pointwise multiplication between functions is indeed  
equivariant and thus respects the bigrading as in \ref{eq:prd-bigrd}.
Potential problems appear with the observation that
after deformation, on the one hand the algebra structure of
$ \mathcal O(G)_\Theta \otimes \mathcal O(G)_\Theta $ 
is inherited from
$\mathcal O( G \times  G)_{ \Theta \oplus \Theta}^{\alpha \otimes  \alpha}$
whose underlying torus action is
$\alpha \otimes  \alpha $ of $T \times  T$ on 
$G \times  G$:  in more detail, 
$  \alpha \otimes  \alpha :
\mathcal O ( G \times G) \to \mathcal O ( G \times G) $ is given by 
\begin{align}
\label{eq:alp-ox-alp-Tact} 
\alpha_{ k_1 , k_2 }   \otimes \alpha_{ k_1' , k_2'} 
(f) ( g_1, g_2) = 
f\brac{ k_1^{-1} g_1 k_2 , (k_1')^{-1} g_2 k_2' } ,
\end{align}
with $f \in  \mathcal O ( G \times  G)$ and 
$k_1 , k_2, k_1' , k_2' \in K $ and $g_1 , g_2 \in  G$. On the other hand, 
for this action the coproduct 
\begin{align*}
\Delta: \mathcal O (G) \to \mathcal O(G \times G) , \quad  
\Delta (f) ( g_1 , g_2 ) = f ( g_1 g_2) , \, \, 
\end{align*}
where $ f \in  \mathcal O(G)$ and  $ g_1 , g_2 \in  G$, 
is not equivariant. 
Nevertheless, the image of $\Delta$ is contained in the subalgebra 
\begin{align}
\label{eq:subspace-C}
\mathcal D = \big\{ 
f \in \mathcal O ( G \times  G) : 
f( g_1 k , g_2) = f ( g_1 , k g_2) , \, \, 
g_1 , g_2 \in  G, \, \,  k \in  K
\big\} ,
\end{align}
and there is a  $T  = K \times K$ action $\beta$ on $ \mathcal D$, given by:
\begin{align}
\label{eq:beta-action} 
\beta_{ k_1 , k_2}  (f) (g_1 , g_2 ) = f \brac{ k_1^{-1} g_1 , g_2 k_2 },
\end{align}
such that:
\begin{itemize}
\item[i)]
the coproduct $\Delta : \mathcal O (G) \to \mathcal D$ is equivariant ,
\item[] ~
\item[ii)] 
the $\theta$-deformation $ \mathcal D^\beta_{ \Theta} $ is a subalgebra of
$ \mathcal O( G \times  G)_{ \Theta \oplus \Theta}^{\alpha \otimes  \alpha}$ .
\end{itemize}
From the bigrading point of view, the subspace $ \mathcal D$
corresponds to 
\begin{align*}
\bigoplus_{r, s ,l \in \IZ^n } H_{ (r, s)} \otimes H_{ (s, l)} \subset H \otimes  H,
\end{align*}
and $\Delta$ being equivariant is exactly the dual condition of
\eqref{eq:coprd-bigrd}.

The counit $\varepsilon : \mathcal O ( G) \to \mathbb{C}$ is not equivalent
in any way, but it factors through
\begin{align*}
\varepsilon :  \mathcal O ( G ) \xrightarrow{\pi} \mathcal O ( K )
\xrightarrow{ \varepsilon_{ \mathcal O (K)} } \mathbb{C} ,
\end{align*}
where the projection $\pi (f) = f |_K $ for $f \in  \mathcal O ( G)$ 
is the restriction map of functions on $G$ onto the subgroup  $K$,
and $\pi$ is  equivariant when $K$ is equipped  with  
the action $\alpha$.  Hence, it is still an equivariant 
algebra homomorphism viewed as
$\pi: \mathcal O (G)_\Theta \to \mathcal O ( K)_\Theta$. 
Also, since $K$ is abelian with the choice of $\Theta$ in \eqref{eq:Theta-matrix},
it is not difficult to see that such $\theta$-deformation alters nothing:
$ \mathcal O ( K)_\Theta = \mathcal O ( K)$.
All these properties are reflected in term of the bigrading in the condition 
\eqref{eq:counit-bigrd}.
For the antipode and the $*$-operator 
\begin{align*}
S (f ) ( g)  = \overline{f(g^{-1})}, \quad
f^*(g) = \overline{f (g)} , \qquad  \forall  f \in  \mathcal O (G)
\end{align*}
it is a routine verification to check that for the bigradings, \eqref{eq:antp-star-gd} is indeed satisfied. 
 
To sum up, let $K$ be a torus subgroup of  $G$ of rank  $n$ and  $G$ is
a subgroup  of $ \widetilde G$. 
Denote by $H = \mathcal O ( G)$ and 
$ \widetilde H = \mathcal O  (\widetilde G)$ the associated Hopf algebras of
representative functions. They can be deformed to two new Hopf algebras 
$H_\Theta$ and $ \widetilde H_\Theta$, thanks to Proposition \ref{prop:H-Theta}. 

If one forgets the coalgebra structure and view $ \widetilde H$
as a $H$-comodule algebra, one has to repeat some of the 
arguments of earlier to check that the coaction 
\begin{align*}
\delta^{ \widetilde H} : \widetilde H \to \widetilde H \ot H , \quad  
\delta^{ \widetilde H} (f ) ( \tilde g, g) = f ( \tilde g , g), \, \quad 
\tilde g \in  \widetilde G, g \in  G
\end{align*}
indeed satisfies all the requirements of Proposition \ref{prop:comodalg-Theta}.
As a result, one obtaines a quantum homogeneous space given by 
$(H_\Theta , \widetilde H_\Theta)$, with coinvariant subalgebra 
$B_\Theta = \widetilde H_\theta^{\mathrm{co} H_\Theta }$ which plays the role of 
coordinate functions on the base. 
Thus, in this way, one $\theta$-deforms the Hopf algebra structures (on  $H$
and  $ \widetilde H$)  and the $H$-comodule algebra structure of $\widetilde H$
in such a way that compatibilities among the three survive.

\end{document}